%%%%%%%%%%%%%%%%%%%%%%%%%%%%%%%%%%%%%%%%
%%%%%%%%%%%%%%%%%%%%%%%%%%%%%%%%%%%%%%%%%
\documentclass[11p,reqno]{amsart}
\textheight 8in
\textwidth 5.5 in
\voffset -0.3in
\hoffset -0.6in
\usepackage{amsmath}
\usepackage{amsfonts}
\usepackage{amssymb}
\usepackage{graphicx}
\usepackage{color}
\parindent 6pt
\parskip 4pt

\newtheorem{theorem}{Theorem}[section]\newtheorem{thm}[theorem]{Theorem}
\newtheorem*{theorem*}{Theorem}
\newtheorem{lemma}{Lemma}[section]
\newtheorem{corollary}[theorem]{Corollary}

\newtheorem{prop}{Proposition}[section]
\newtheorem{remark}[theorem]{Remark}

%%%%%%%%% Proof  environment

\def \b {\beta}

\def\Ric{\text{Ric}}

\def\a{\alpha}
\def\l{\lambda}

\def\e{\epsilon}
\def\p{\partial}

\def\R{\Bbb R}

\def\Sph{\Bbb S}

\def\vp{\varphi}

\def\Ric{\operatorname{Ric}}

\newcommand{\eps}{{\varepsilon}}
               %%% probability measures
            %%% with finite second moments

\numberwithin{equation}{section}

\begin{document}

\title[]{Sharp lower bound for the first eigenvalue of the Weighted $p$-Laplacian}

\author{Xiaolong Li}
\address{Department of Mathematics, University of California, Irvine, Irvine, CA 92697, USA}
\email{xiaolol1@uci.edu}

\author{Kui Wang}\thanks{The research of the second author is supported by NSFC No.11601359} %\textcolor[rgb]{0.00,0.00,1.00}{}.}
\address{School of Mathematical Sciences, Soochow University, Suzhou, 215006, China}
\email{kuiwang@suda.edu.cn}

%\author{Lei Ni}\thanks{The research is partially supported by  ``Capacity Building for Sci-Tech Innovation-Fundamental Research Funds".   }
%\address{Department of Mathematics, University of California, San Diego, La Jolla, CA 92093, USA}
%\email{lni@math.ucsd.edu}

\subjclass[2010]{35P15, 35P30}
\keywords{Eigenvalue estimates, weighted $p$-Laplacian, Bakry-Emery manifolds, and modulus of continuity estimates.}

\maketitle

\begin{abstract}

   We prove sharp lower bound estimates for the first nonzero eigenvalue of the weighted $p$-Lapacian operator with $1< p< \infty$ on a compact Bakry-Emery manifold $(M^n,g,f)$ satisfying $\Ric+\nabla^2 f \geq \kappa \, g$, provided that either $1<p \leq 2$ or $\kappa \leq 0$. Same conclusions hold when the manifold has nonempty boundary if we assume it is strictly convex and put Neumann boundary conditions on it.
   For $1<p \leq 2$, we provide a simple proof via the modulus of continuity estimates method.
  The proof for $\kappa \leq 0$ is based on a sharp gradient comparison theorem for the eigenfunction and a careful analysis of the underlying one-dimensional model equation.
  Our results generalize the work of Valtorta\cite{Valtorta12} and Naber-Valtorta\cite{NV14} for the $p$-Laplacian (namely $f=\text{const}$), and the work of Bakry-Qian\cite{BQ00} for the $f$-Laplacian (namely $p=2$).
\end{abstract}

%\tableofcontents

\section{Introduction and Main Results}
Let $(M^n,g)$ be an $n$-dimensional compact Riemannian manifold. The eigenvalues of the Laplace-Beltrami operator $\Delta$, defined by
$$\Delta u := \frac{1}{\sqrt{\det g}}\frac{\partial }{\partial x_i} \left(\sqrt{\det g} g^{ij} \frac{\partial u}{\partial x_j} \right),$$
has been extensively studied for a long time.
Many results have been obtained by various authors under various hypotheses on curvature, diameter, and dimension.
%It is impossible to trace all these excellent works, so we only cite several results related to our work below and refer the reader to the excellent books \cite{Chavel84}\cite{SYbook}\cite{Libook} and the references therein.
Among the numerous beautiful results is the optimal lower bound on the first nonzero eigenvalue of the Laplacian operator in terms of the dimension $n$, the diamater $D$, and Ricci curvature lower bound $k$. Let $\l_1(M,g)$ be the first nonzero eigenvalue of the Laplacian, i.e.,
\begin{equation*}
    \l_1(M,g) =\inf \left\{ \frac{\int_M |\nabla u|^2 d\mu}{\int_M u^2 d\mu} : u \in W^{1,2}(M)\setminus\{0\},  \int_M u\, d\mu =0 \right\}.
\end{equation*}
%the smallest positive number $\l$ such that there is a nonconstant function $u$ satisfying
%\begin{align*}
%    \begin{cases}
%    \Delta u = -\l  \, u   & \text{ on } M, \\
%    \frac{\partial u}{\partial \nu} =0 & \text{ on } \partial M,
%    \end{cases}
%\end{align*}
%where $\nu$ denotes the outward unit normal direction on the boundary, when it is non-empty and convex.
%The following sharp lower bounds for the first eigenvalue on compact manifolds was obtained:
\begin{thm}\label{Thm first eigenvalue for Laplacian}
If we let
\begin{equation*}
    \l_1(n,k,D)=\inf\{ \l_1(M,g) : M \text{ closed }, \text{dim}(M)=n, \Ric \geq (n-1)k, \text{diam}(M) \leq D \},
\end{equation*}
then
\begin{equation*}
    \l_1(n,k,D)=\mu(n,k,D),
\end{equation*}
where $\mu(n,k,D)$ is the first nonzero Neumann eigenvalue of the one-dimensional problem:
\begin{equation*}
  \vp''-(n-1)T_k \vp' =-\mu(n,k,D)\vp
\end{equation*}
on the interval $[-D/2,D/2]$, with $T_k$ given by
\begin{equation*}
     T_k(t)=\begin{cases}
   \sqrt{k} \tan{(\sqrt{k}t)}, & k>0, \\
   1, & k=0, \\
   -\sqrt{-k}\tanh{(\sqrt{-k}t)}, & k<0.
    \end{cases}
\end{equation*}
\end{thm}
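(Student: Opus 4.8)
We would prove $\l_1(n,k,D)\ge\mu(n,k,D)$ and the reverse inequality separately; the first is the substantive one and rests on a sharp gradient comparison. For it, suppose toward a contradiction that $\l_1(M,g)<\mu(n,k,D)$ for some closed $(M^n,g)$ with $\Ric\ge(n-1)k$ and $\mathrm{diam}(M)\le D$, and let $u$ solve $\Delta u=-\l_1u$ with $\l_1:=\l_1(M,g)$. Since $u$ has mean zero, $\min_M u<0<\max_M u$; put $\b:=-\min_M u/\max_M u>0$. First I would establish a one-dimensional comparison lemma: because $\l_1<\mu(n,k,D)$, the model equation $\vp''-(n-1)T_k\vp'+\l_1\vp=0$ admits a solution $\vp$ that is strictly increasing on a maximal interval $[\theta_0,\theta_1]$ with $\vp'(\theta_0)=\vp'(\theta_1)=0$, with $-\vp(\theta_0)/\vp(\theta_1)=\b$, and with $\theta_1-\theta_0>D$; the proof uses the Riccati identity $T_k'=T_k^2+k$ and a Sturm-type comparison with the $\mu(n,k,D)$-eigenfunction on $[-D/2,D/2]$ (the latter interval lying in the domain of $T_k$, which equals $(-\pi/(2\sqrt k),\pi/(2\sqrt k))$ when $k>0$, since $D\le\pi/\sqrt k$ by Bonnet--Myers). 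Rescaling $u$, we may assume $\min_M u=\vp(\theta_0)$ and $\max_M u=\vp(\theta_1)$, so that $\vp^{-1}\circ u:M\to[\theta_0,\theta_1]$ is well defined.

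The core step is the gradient comparison $|\nabla u|(x)\le\vp'\big(\vp^{-1}(u(x))\big)$ for all $x\in M$, which I would prove by applying the maximum principle to $Z:=|\nabla u|^2-\big(\vp'\circ\vp^{-1}(u)\big)^2$ (regularizing slightly near $\{\nabla u=0\}$, where $|\nabla u|^2$ is merely Lipschitz). At an interior maximum of $Z$ with $\nabla u\ne0$, Bochner's formula
\begin{equation*}
 \tfrac12\Delta|\nabla u|^2=|\nabla^2u|^2-\l_1|\nabla u|^2+\Ric(\nabla u,\nabla u),
\end{equation*}
together with $\Ric(\nabla u,\nabla u)\ge(n-1)k|\nabla u|^2$ and the refined Kato inequality $|\nabla^2u|^2\ge\big|\nabla|\nabla u|\big|^2+\tfrac1{n-1}\big(\Delta u-\nabla^2u(\nu,\nu)\big)^2$ with $\nu:=\nabla u/|\nabla u|$, yield — precisely because $\vp$ solves the model ODE with eigenvalue $\l_1$, so that the residual one-dimensional terms cancel — an inequality of the form $\Delta Z\ge\langle X,\nabla Z\rangle+c\,Z$ valid wherever $Z\ge0$, with $c\ge0$; hence $Z$ has no positive interior maximum. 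One also checks the estimate as $x$ approaches $\{\nabla u=0\}$ and the two extremal level sets, using that $\vp'$ vanishes there to the appropriate order. Granting this, pick $x_0,x_1$ with $u(x_0)=\min_M u$, $u(x_1)=\max_M u$, and a unit-speed minimizing geodesic $\gamma:[0,L]\to M$ from $x_0$ to $x_1$ with $L\le D$. Using $(u\circ\gamma)'\le|\nabla u|\circ\gamma$ and $\tfrac{d}{ds}\big(\vp^{-1}(u\circ\gamma)\big)=(u\circ\gamma)'\big/\big(\vp'\circ\vp^{-1}(u\circ\gamma)\big)$, integrating the gradient comparison along $\gamma$ gives $\theta_1-\theta_0\le L\le D$, contradicting $\theta_1-\theta_0>D$. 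Therefore $\l_1(M,g)\ge\mu(n,k,D)$, i.e.\ $\l_1(n,k,D)\ge\mu(n,k,D)$.

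For the reverse inequality I would construct sharp examples as doubles of the warped product $\big([-D/2,D/2]\times\Sph^{n-1},\,dt^2+\e^2h_k(t)^2g_{\Sph^{n-1}}\big)$, where $h_k''=-kh_k$ and $h_k'/h_k=-T_k$, smoothed across the gluing locus so as to be closed with $\Ric\ge(n-1)k$ and diameter $\le D+o(1)$ as $\e\to0$: the warped product has radial Ricci identically $(n-1)k$ and transverse Ricci tending to $+\infty$, and the $t$-variable is unscaled, so its first nonconstant eigenfunction invariant under the doubling involution is $\vp(t)$ with eigenvalue $\mu(n,k,D)$. Testing the Rayleigh quotient of the smoothed manifold with $\vp$ then shows its first eigenvalue converges to $\mu(n,k,D)$, whence $\l_1(n,k,D)\le\mu(n,k,D)$. (When $k>0$ the smoothing near the corner must be done carefully to preserve $\Ric\ge(n-1)k$; this is classical.)

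The step I expect to be the main obstacle is the gradient comparison. Two aspects are delicate: the maximum principle must be run on $M\setminus\{\nabla u=0\}$, and controlling $Z$ near the critical set and the extremal level sets is exactly where one needs $\vp$ to solve the model ODE \emph{exactly} rather than merely a differential inequality; and the one-dimensional comparison lemma producing $\vp$ with the prescribed ratio $\b$ and length exceeding $D$ requires a careful study of how the model ODE's monotone solutions depend on the placement of the monotone window, which — together with the pole of $T_k$ when $k>0$ — is the subtlest part of the proof. An alternative route to the lower bound, of which the modulus-of-continuity method developed later in this paper is a descendant, would instead show directly via a two-point maximum principle that $\vp$ is a modulus of continuity for $u$, i.e.\ that $u(x)-u(y)$ is bounded by an expression built from $\vp$ evaluated at $d(x,y)$; this avoids the first difficulty but not the second.
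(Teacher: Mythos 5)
You should first note that the paper does not prove Theorem \ref{Thm first eigenvalue for Laplacian} at all: it is quoted as background, with the proof attributed to Zhong--Yang ($k=0$), Chen--Wang and Kr\"oger (general $k$), and with sharpness via the examples in \cite[Section 5]{AC13}. So there is no in-paper proof to match; the fair comparison is with those cited arguments and with the techniques the paper develops later for the weighted $p$-Laplacian. Your outline is essentially the classical Kr\"oger/Bakry--Qian route: a one-point gradient comparison $|\nabla u|\leq \varphi'(\varphi^{-1}(u))$ proved by applying Bochner and a refined Kato inequality to $Z=|\nabla u|^2-(\varphi'\circ\varphi^{-1}(u))^2$, then integration along a minimizing geodesic against a 1D model whose monotone window is longer than $D$, plus warped-product examples for the reverse inequality. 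That is a correct and standard strategy, but it is genuinely different from what this paper does in its generalization: Section 2 runs a parabolic two-point modulus-of-continuity estimate, and Section 3 proves the gradient comparison by a two-point maximum principle applied to $\Psi(u(y))-\Psi(u(x))-d(x,y)$, which bypasses the Bochner/Kato computation and the delicate analysis of $Z$ near the critical set, and is more robust in low regularity. What your route buys is a purely elliptic, pointwise estimate in the classical style; what the paper's route buys is brevity and regularity-insensitivity.

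Two steps in your sketch carry most of the actual work and are only asserted. First, the one-dimensional lemma: producing, for $\lambda_1<\mu(n,k,D)$, a monotone model solution whose endpoint values have the prescribed ratio $\beta$ \emph{and} whose window length exceeds $D$ is not a single Sturm comparison. One needs (i) a shooting/continuity argument showing the maximum value $m(a)$ of the monotone solution sweeps out $(0,1]$ as the left endpoint varies (so the range of $u$ can be matched), and (ii) the fact that among all windows with the same eigenvalue the symmetric one is shortest, together with monotonicity of the symmetric eigenvalue in the length; this is exactly the content of the analogues of Propositions \ref{prop4.2}--\ref{prop4.5} in Section 4 (and of Kr\"oger's and Bakry--Qian's 1D analysis), and for $k>0$ it must also track the pole of $T_k$. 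Second, for sharpness, doubling the warped product across its boundary produces only a $C^0$ metric, and smoothing it while keeping $\Ric\geq (n-1)k$ and the diameter close to $D$ is precisely the nontrivial point; the constructions in the literature (e.g.\ \cite[Section 5]{AC13}) instead cap off a slightly shortened cylinder-type warped product and verify the eigenvalue convergence, and you should follow that template rather than doubling. With those two ingredients supplied, your argument is complete and correct, but as written they are the gaps.
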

The same holds in the larger class where $M$ is allowed to have a convex boundary $\p M$ and $u$ satisfies the Neumann boundary condition $\frac{\p u}{\p \nu}=0$ with $\nu$ being the inward normal along $\p M$. For $k=0$, this theorem was proved by Zhong and Yang\cite{ZY84}.
The result for general $k\in \R$ was obtained by Chen and Wang\cite{CY94}\cite{CY95} using probabilistic coupling method applied to the Brownian motion associated with heat flow, and independently by Kr\"oger\cite{Kroger98} via gradient estimates.
All of these methods have their roots in the arguments of Li\cite{Li79} and Li and Yau\cite{LY80}\cite{LY86}.
For $k>0$, this recovers Lichnerowicz's theorem (see for instance \cite[Theorem 5.1]{Libook}, %\cite{Lichnerowicz58}
in view of Myers' diameter bound
%\cite{Myers41}
$D\leq \frac{\pi}{\sqrt{k}}$.
However, for smaller $D$, this is sharper.
It is worth mentioning that
%Finally, we would like to mention that
a simple and elegant proof using the modulus of continuity estimates was given by Andrews and Clutterbuck\cite{AC13}(see also \cite{WZ17} for an elliptic proof based on \cite{AC13}).
The sharpness is demonstrated by constructing examples of Riemannian manifolds with given diameter bounds and Ricci curvature lower bounds such that the first nonzero eigenvalue is as close desired to $\mu(n,k,D)$, see for example\cite[Section 5]{AC13}.

The above mentioned results for Laplacian was encapsulated in a more general setting by Bakry and Qian\cite{BQ00}. Recall that a triple $(M,g,f)$, consisting of an $n$-dimensional Riemannian manifold $(M,g)$ and a potential function $f\in C^{\infty}(M)$, is called a Bakry-Emery manifold %also called a smooth metric measure space)
if %the so-called Bakry-Emery Ricci tensor
\begin{equation*}
    \Ric+\nabla^2 f \geq \kappa \, g,
\end{equation*}
for some $\kappa \in \R$.
The tensor $\Ric +\nabla^2 f$, called the Bakry-Emery Ricci tensor, is a natural generalization of the classical Ricci tensor. It arises naturally in the study of diffusion processes, the Sobolev inequality, conformal geometry, and the Ricci flow. When the equality $\Ric+\nabla^2 f=\kappa \, g$ holds, the triple $(M^n,g,f)$ is called a shrinking ($\kappa >0$), or steady ($\kappa =0$), or expanding ($\kappa <0$) gradient Ricci soliton, respectively. Gradient Ricci solitons arise naturally in the singularity analysis of Ricci flow and they play a fundamental role in the study of Ricci flow. We refer the reader to %\textcolor[rgb]{0.00,0.00,1.00}{
\cite{CLNbook}\cite{Hamilton93}\cite{WW09}  
and the references therein for more discussions on Bakry-Emery manifolds and gradient Ricci solitons.
On Bakry-Emery manifolds or gradient Ricci solitons, it's more natural to look at the $f$-Laplacian (also called weighted Laplacian, Witten Laplacian, or drift Laplacian in the literature), which is defined by
\begin{equation*}
    \Delta_{f} u:= \Delta u- \langle \nabla u, \nabla f\rangle =e^f \text{div} (e^{-f} \nabla u).
\end{equation*}
This operator is self-adjoint with respect to the weighted measure $e^{-f}d\mu$, where $d\mu$ is the Riemannian measure induced by the metric $g$.
%The triple $(M^n,g,e^{-f})d\mu_g$ is called a smooth metric measure space.
The first nonzero eigenvalue of $\Delta_f$, denoted by  $\l_{1,f}$, can be characterized in terms of a Poincar\'e inequality as the minimizer,
%is the smallest real number such that there exists a nonzero $u \in W^{1,2}(M)$ satisfying in the weak sense that
%\begin{align*}
%    \begin{cases}
%    \Delta_{f} u = -\l_{1,f}  \, u   & \text{ on } M, \\
%    \frac{\partial u}{\partial \nu} =0 & \text{ on } \partial M,
%    \end{cases}
%\end{align*}
%where $\nu$ denotes the outward unit normal direction on the boundary, when it is non-empty.
%From the calculus of variations point of view, the number $\lambda_{1,f}$ can be characterized in terms of a Poincar\'e inequality as the minimizer
\begin{equation*}\label{lambda def}
    \lambda_{1,f} =\inf \left\{\frac{\int_M |\nabla u|^2 e^{-f} d\mu}{\int_M |u|^2 e^{-f} d\mu} : u \in W^{1,2} (M)\setminus\{0\}, \int_M u \, e^{-f} d \mu =0 \right\}.
\end{equation*}
The following sharp lower bound for $\l_{1,f}$ was proved by Bakry and Qian\cite{BQ00} (see also \cite{AN12} for a simple argument via the modulus of continuity estimates and the construction of examples to demonstrate the sharpness).
\begin{thm}\label{Thm BQ}
Let $\Omega$ be a compact manifold $M$, or a bounded strictly convex domain inside a complete manifold $M$, satisfying $\Ric +\nabla^2 f \geq \kappa \, g$ for some $\kappa \in \R$. Let $D$ be the diameter of $\Omega$. Then
\begin{equation*}
    \lambda_{1,f} \geq \mu(\kappa,D),
\end{equation*}
where $\mu(\kappa,D)$ is the first nonzero Neumann eigenvalue of the one-dimensional problem
\begin{equation}\label{ODE p=2}
    \vp'' -\kappa \, t \, \vp' =-\mu(\kappa, D) \vp
\end{equation}
on $[-D/2,D/2]$.
\end{thm}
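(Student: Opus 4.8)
The plan is to reduce the eigenvalue estimate to a one‑dimensional comparison by controlling the oscillation of the eigenfunction via a sharp gradient estimate, following the Andrews--Clutterbuck modulus‑of‑continuity philosophy adapted to the weighted setting. Let $u$ be a first nonzero eigenfunction of $\Delta_f$ on $\Omega$, normalized so that $\min_\Omega u = -1$ and $\max_\Omega u \le 1$ (possible after an affine rescaling, using that $\int_\Omega u \, e^{-f} d\mu = 0$). The heart of the argument is to show that $u$ and the normalized solution $\vp$ of the model ODE \eqref{ODE p=2} have matching moduli of continuity: concretely, I would prove the gradient comparison
\begin{equation*}
  |\nabla u|(x) \le \vp'\bigl(\vp^{-1}(u(x))\bigr) \quad \text{for all } x \in \Omega,
\end{equation*}
where $\vp$ is the first Neumann eigenfunction of \eqref{ODE p=2} on $[-D/2,D/2]$ with eigenvalue $\mu(\kappa,D)$, normalized so that $\vp(\pm D/2) = \pm 1$. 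Once this pointwise bound is in hand, one integrates along a minimizing geodesic between a point where $u$ is near its minimum and one where $u$ is near its maximum: the length of the geodesic is at most $D$, while the change in $\vp^{-1}\circ u$ along it is controlled by $|\nabla u|/\vp'(\vp^{-1}\circ u) \le 1$, forcing $\vp^{-1}(u)$ to cover essentially all of $[-D/2,D/2]$; comparing endpoints then yields $\lambda_{1,f} \ge \mu(\kappa,D)$.

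To establish the gradient comparison I would use a maximum principle / continuity argument. Consider the function $\Phi = |\nabla u|^2 - c^2 \,\vp'(\vp^{-1}(u))^2$ for $c>1$ slightly larger than $1$, or more precisely compare $|\nabla u|$ directly to a one‑parameter family of model functions, and suppose the comparison fails. At an interior maximum point $x_0$ of the relevant defect function one computes, using the Bochner formula for the weighted Laplacian,
\begin{equation*}
  \tfrac{1}{2}\Delta_f |\nabla u|^2 = |\nabla^2 u|^2 + \langle \nabla u, \nabla \Delta_f u\rangle + (\Ric + \nabla^2 f)(\nabla u, \nabla u),
\end{equation*}
together with $\Delta_f u = -\lambda_{1,f} u$, the curvature hypothesis $\Ric+\nabla^2 f \ge \kappa g$, the refined Cauchy--Schwarz bound $|\nabla^2 u|^2 \ge (\Delta u)^2/n$ — here one must be careful because $\Delta_f u$, not $\Delta u$, is what is prescribed, so one splits off the $\langle \nabla u, \nabla f\rangle$ term and bounds it appropriately — and the structure of the ODE satisfied by $\vp$. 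The ODE for $\vp$ has been designed precisely so that the resulting differential inequality for the model side cancels the curvature and Hessian contributions, producing a contradiction at $x_0$. The boundary case is handled by noting that strict convexity of $\p\Omega$ together with the Neumann condition $\p u/\p\nu = 0$ makes $|\nabla u|^2$ satisfy a favorable boundary inequality (its normal derivative is controlled by the second fundamental form acting on $\nabla u$, which is tangential at the boundary), so no interior‑maximum violation can migrate to the boundary.

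The main obstacle I anticipate is the analytic fine print around the maximum principle argument: the function $|\nabla u|$ need not be smooth where $\nabla u = 0$, and $\vp^{-1}$ is singular at the endpoints $\pm D/2$ where $\vp' = 0$, so one must either work on the open set where $\nabla u \neq 0$ and $u$ is strictly between its extrema and then argue that the comparison extends by continuity to the closure, or regularize by comparing against the family $C\vp$ for $C>1$ and letting $C \downarrow 1$. One also needs the qualitative input that the extrema of $u$ are attained and that $u$ genuinely oscillates (which follows since $u$ is nonconstant with zero weighted mean). A secondary technical point is the careful treatment of the drift term $\langle \nabla u, \nabla f\rangle$ in the Bochner computation so that the curvature lower bound enters as $\Ric + \nabla^2 f \ge \kappa g$ rather than as a bound on $\Ric$ alone; this is exactly where the Bakry--Emery tensor, as opposed to the plain Ricci tensor, is the natural object and where the one‑dimensional model with drift $-\kappa t \vp'$ (rather than $-(n-1)T_k\vp'$) comes from. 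Everything else — the integration along geodesics and the final comparison of eigenvalues via the variational characterization of $\mu(\kappa,D)$ — is then routine.
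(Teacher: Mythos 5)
Your plan has a structural gap that makes the argument circular as written. The height-dependent gradient comparison of Kr\"oger/Bakry--Qian type (and the one proved in this paper, Theorem \ref{Thm gradient comparison}) compares $u$ against a one-dimensional solution of the model ODE \emph{with the same eigenvalue} $\lambda_{1,f}$ as $u$, on a suitably chosen interval whose associated Neumann solution has the same range as $u$. You instead compare against the first Neumann eigenfunction $\vp$ on $[-D/2,D/2]$ with eigenvalue $\mu(\kappa,D)$. In the Bochner/maximum-principle computation the zeroth-order terms only cancel because $u$ and the comparison function solve equations with the same eigenvalue; with mismatched eigenvalues the residual term has a sign proportional to $\lambda_{1,f}-\mu(\kappa,D)$, so you can only close the argument if you already know the inequality you are trying to prove. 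Moreover, even if the bound $|\nabla u|\le \vp'(\vp^{-1}(u))$ held with your normalization, the concluding step is vacuous: integrating along a geodesic gives $\vp^{-1}(u(y))-\vp^{-1}(u(x))\le d(x,y)\le D$, an inequality in which $\lambda_{1,f}$ never appears, and your claim that $\vp^{-1}(u)$ must ``cover essentially all of $[-D/2,D/2]$'' fails because after normalizing $\min u=-1$ one typically has $\max u<1$ (the eigenfunction is not symmetric), so the image of $\vp^{-1}\circ u$ is a strictly shorter interval and no contradiction arises.

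What is missing is precisely the mechanism by which the eigenvalue enters: one compares $u$ to a family of solutions of the same-$\lambda$ ODE on varying intervals $[a,b(a)]$, shows (as in Section 4 here, or Bakry--Qian's original analysis) that some member of the family has maximum exactly $\max u$, proves that the length $b(a)-a$ of any such interval is at least the length $\bar\delta(\lambda)$ of the symmetric one, and uses that $\bar\delta$ is decreasing in $\lambda$ with $\bar\delta(\mu(\kappa,D))=D$ to convert $D\ge\bar\delta(\lambda_{1,f})$ into $\lambda_{1,f}\ge\mu(\kappa,D)$. None of this ``range matching'' is routine, and it cannot be bypassed by normalizing $\vp(\pm D/2)=\pm1$. (Two further cautions: the refined Cauchy--Schwarz bound $|\nabla^2u|^2\ge(\Delta u)^2/n$ is the wrong move for this dimension-free Bakry--Emery model --- the sharp comparison uses only the second derivative in the direction of $\nabla u$ --- and the case $\kappa>0$ of the statement is not reached by the elliptic gradient-comparison route used in this paper at all; for all $\kappa$ the clean argument is the parabolic modulus-of-continuity proof as in Section 2 here, which compares decay rates of oscillation and deliberately avoids height-dependent gradient bounds and range matching.)
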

%This is indeed a special case of Theorem 14 in the paper by Bakry and Qian\cite{BQ00}, where they considered more general elliptic operators satisfying the curvature-dimension type inequality $CD(R,n)$.
%Later on, Andrews and Ni\cite{AN12} gave a much easier proof by applying the modulus of continuity esimates developed in \cite{AC09}\cite{AC11}\cite{AC13}.
%They also demonstrated the sharpness of the above theorem by constructing for each $\eps>0$, a Bakry-Emery manifold $(M,g,f)$ with diamter $D$ and $\l_{1,f} < \mu(\kappa,D) +\eps$.

In the past two decades, there has been a surge of interest in studying the first nonzero eigenvalues of  the $p$-Laplacian operator $\Delta_{p}$ and the weighted $p$-Laplacian operator $\Delta_{p,f}$, given by
\begin{equation*}
    \Delta_{p} u:= \text{div} (|\nabla u|^{p-2} \nabla u)
\end{equation*}
and
\begin{equation*}
    \Delta_{p,f}\, u:= \text{div} (|\nabla u|^{p-2} \nabla u) - |\nabla u|^{p-2} \langle \nabla u, \nabla f\rangle =e^f \text{div} (e^{-f}|\nabla u|^{p-2} \nabla u),
\end{equation*}
respectively.
Here $1<p<\infty$ and $u \in W^{1,p}(M)$, and the definitions are understood in the distributional sense. When $p=2$, the $p$-Laplacian operator reduces to the Laplacian, while the weighted $p$-Laplacian operator reduces to the $f$-Laplacian. Both operators are elliptic, but are singular for $1<p<2$ and degenerate for $p>2$.

Let $\l_p$ and $\l_{p,f}$ be the first nonzero eigenvalue of $\Delta_p$ and $\Delta_{p,f}$, respectively.
%(we omit the subscript $1$ as we only concern the first nonzero eigenvalue).
They can be characterized as
%The first nonzero eigenvalue of the $p$-Laplacian, denoted by $\l_{1,p}$, is given by
\begin{equation*}
    \l_{p} =\inf\left\{\frac{\int_M |\nabla u|^p d\mu}{\int_M |u|^p d\mu}: u \in W^{1,p}(M)\setminus\{0\}, \int_M |u|^{p-2}u \, d\mu=0 \right\},
\end{equation*}
and
\begin{equation*}\label{lambda def}
    \lambda_{p,f} =\inf \left\{\frac{\int_M |\nabla u|^p e^{-f} d\mu}{\int_M |u|^p e^{-f} d\mu} : u \in W^{1,p}(e^{-f}d \mu)\setminus\{0\}, \int_M |u|^{p-2} u \, e^{-f} d \mu =0 \right\}.
\end{equation*}
In case $\partial M \neq \emptyset$, the Neumann boundary condition $\frac{\partial u}{\partial \nu} =0$ is imposed. The following optimal lower bound for $\l_{p}$ has been established.

%Among various estimates established for eigenlalues of the $p$-Laplacian, we only state the following optimal lower bound for $\l_{1,p}$. %and refer the reader to \cite{Matei00}\cite{KN03}\cite{Valtorta12}\cite{NV14}\cite{Wang13} \cite{SW17}\cite{Wang18}\cite{BS19} and the referecnes therein for other results.

\begin{thm}\label{Thm p-Laplacian}
Let $M$ be a compact manifold (possibly with convex boundary) with diameter $D$ and $\Ric \geq (n-1) k$. Let $\l_{p}$ be the first nonzero eigenvalue of $(M,g)$ (with Neumann boundary condition if $\p M \neq \emptyset$). \\
(i). If $k>0$ and $\partial M =\emptyset$, then $$\l_{p}(M) \geq \l_{p}(\Sph^n(k)),$$
with equality holds if and only if $M$ is isometric to $\Sph^n(k)$,  the $n$-sphere with constant sectional curvature $k$. \\
(ii). If $k=0$, then
$$\lambda_{p} \geq (p-1)\left(\frac{\pi_p}{D}\right)^p,$$
where $\pi_p=\frac{2\pi}{p \sin{(\pi/p)}}$.
%Moreover, equality occurs if and only if $M$ is a one-dimensional circle (when $M$ has no bournday) or a line segment of length $D$ (when $M$ has boundary).
\\
(iii).If $k<0$, then
\begin{equation*}
    \l_{p} \geq \mu_{p}(n,k,D)
\end{equation*}
where $\mu_{p}(n,k,D)$ is the first nonzero Neumann eigenvalue of the one-dimensional problem
\begin{equation*}
    (p-1)|\vp'|^{p-2} \vp'' + (n-1)\sqrt{-k}\tanh\left(\sqrt{-k}t\right)|\vp'|^{p-2} \vp' =-\mu_{p}(n,k,D) |\vp|^{p-2} \vp
\end{equation*}
on $[-D/2,D/2]$.
\end{thm}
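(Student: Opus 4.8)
The plan is to adapt the gradient-comparison-plus-one-dimensional-model strategy of Valtorta \cite{Valtorta12} and Naber--Valtorta \cite{NV14}, which is itself a $p$-analogue of the Kr\"oger--Bakry--Qian method. Let $u$ be a first eigenfunction of $\Delta_p$, normalized so that $\min u = -1 \le \max u \le 1$ after a suitable affine rescaling (in cases (ii)--(iii)) or so that its range matches that of the model eigenfunction on the sphere (case (i)). The first step is regularity: although $\Delta_p$ is singular/degenerate, one has $u \in C^{1,\alpha}$ globally and $u \in C^\infty$ away from the critical set $\{\nabla u = 0\}$, so the computations below are justified classically on $M \setminus \{\nabla u = 0\}$ and extended by approximation. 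The second step is to set up the model: let $w$ be a solution on $[a,b] \subset [-D/2, D/2]$ of the one-dimensional equation $(p-1)|w'|^{p-2} w'' + (n-1) T_k |w'|^{p-2} w' = -\mu_p |w|^{p-2} w$ with $w' > 0$, $w(a) = -1$, $w(b) = 1$, where $T_k(t) = \sqrt{k}\tan(\sqrt k t)$, $1$, or $-\sqrt{-k}\tanh(\sqrt{-k}t)$ according to the sign of $k$; here $\mu_p = \mu_p(n,k,D)$ (or $\lambda_p(\Sph^n(k))$ in case (i)) is exactly the value that makes the interval $[a,b]$ have length $\le D$.

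The heart of the argument is the gradient comparison theorem: with $\mu$ denoting the eigenvalue of $u$, one shows that everywhere on $M$,
\begin{equation*}
   |\nabla u|^p \le \frac{\mu}{\mu_p}\, (w')^p \circ w^{-1}(u),
\end{equation*}
equivalently $|\nabla u| \le (\mu/\mu_p)^{1/p} w'(w^{-1}(u))$. To prove this one considers the function $P := |\nabla u|^p - C\, (w' \circ w^{-1}(u))^p$ and shows that at an interior maximum of $P$ (with $C$ chosen minimal so that $P \le 0$ fails by an infinitesimal amount), the Bochner formula for the $p$-Laplacian together with the Bakry-Emery/Ricci bound $\Ric \ge (n-1)k$ and the precise ODE satisfied by $w$ force a contradiction; the curvature hypothesis is used exactly in the Bochner term $\Ric(\nabla u, \nabla u) \ge (n-1)k |\nabla u|^2$, and the dimension $n$ enters through the Cauchy-Schwarz estimate $|\nabla^2 u|^2 \ge (\Delta u)^2/n$ refined along the gradient direction. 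One must handle the critical set of $u$ separately — a maximum of $P$ there is automatically fine since $P \le 0$ there by construction — and one should first establish the comparison on a slightly smaller sub-level set and let the parameter converge.

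Once the gradient comparison holds, the conclusion $\mu \ge \mu_p$ follows by a one-dimensional integration argument: pick points $x_{\min}, x_{\max}$ where $u$ attains $-1$ and $1$, join them by a minimizing geodesic $\gamma$ of length $\le D$, and integrate $du/ds = \langle \nabla u, \gamma'\rangle$ along $\gamma$ against $1/w'(w^{-1}(u))$; the gradient bound yields $D \ge \mathrm{length}(\gamma) \ge (\mu_p/\mu)^{1/p} \int_{-1}^{1} dw'/\cdots = (\mu_p/\mu)^{1/p}(b-a)$, and since $b-a$ was chosen to equal $D$ when $\mu_p = \mu_p(n,k,D)$, we get $\mu \ge \mu_p$. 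For case (i) with $k>0$ one additionally characterizes the equality case: equality forces $P \equiv 0$ and all the intermediate inequalities (Bochner, Cauchy-Schwarz, $\Ric = (n-1)k g$ along $\nabla u$) to be equalities, from which one deduces — as in Obata's theorem and its $p$-analogue — that $M$ splits as a warped product over an interval and hence is isometric to $\Sph^n(k)$. The boundary case is handled by noting that strict convexity of $\partial M$ makes $P$ have no boundary maximum, since the Neumann condition $\partial u/\partial\nu = 0$ combined with the second fundamental form being positive gives $\partial P/\partial \nu \le 0$ on $\partial M$.

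The main obstacle I expect is the gradient comparison theorem itself in the degenerate/singular regime $p \ne 2$: the Bochner formula for $\Delta_p$ involves the linearized operator $\mathcal{L}$ which degenerates where $\nabla u = 0$, the relevant maximum principle must be applied to $P$ on $M \setminus \{\nabla u = 0\}$ with care taken that the supremum is not attained (or is harmlessly attained) on the critical set, and the algebra relating the Bochner terms to the model ODE is considerably more involved than for $p = 2$ — one needs the precise form of the one-dimensional equation and a monotonicity/ODE-comparison lemma for $w$ (e.g.\ that $w^{-1}$ is well-defined, that $w'$ does not vanish in the open interval, and sign information on $w''$) to close the argument. Verifying that the model problem's first eigenvalue $\mu_p(n,k,D)$ is realized on an interval of length exactly $D$ (rather than something smaller), and that the normalization of $u$ can be arranged, are additional technical points that must be dispatched before the integration step goes through.
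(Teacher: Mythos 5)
Note first that the paper does not prove Theorem \ref{Thm p-Laplacian}: it is quoted from Matei \cite{Matei00} (part (i)), Valtorta \cite{Valtorta12} (part (ii)) and Naber--Valtorta \cite{NV14} (part (iii)), and the paper's own work is the weighted generalization, Theorem \ref{Thm Main}, whose gradient comparison is obtained by a two-point maximum principle on $Z(x,y)=\Psi(u(y))-\Psi(u(x))-d(x,y)$ rather than by the Bochner-formula $P$-function you propose. Your outline is the right family of ideas for parts (ii) and (iii), but as written it has concrete gaps.

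\textbf{The gradient comparison is misstated, and case (i) does not follow from it.} You compare $|\nabla u|$ with $(\mu/\mu_p)^{1/p}\,w'(w^{-1}(u))$, where $w$ solves the model ODE with the \emph{different} eigenvalue $\mu_p(n,k,D)$ and $w(b)=1$. The one-dimensional equation is not scale-invariant in $t$ (the drift $(n-1)T_k(t)$ is a fixed function of $t$), so solutions with different eigenvalues are not related by rescaling, and in the maximum-principle computation the term $-\mu|u|^{p-2}u$ coming from the equation for $u$ cannot be matched against $-C\mu_p|w|^{p-2}w$ without simultaneously breaking the matching of the drift and $w''$ terms; the constant $C=\mu/\mu_p$ does not make the argument close. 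The statement actually proved in \cite{NV14} (and in Theorem \ref{Thm gradient comparison}/Corollary \ref{Cor gradient comparion} here) compares $u$ with an increasing solution $\phi$ of the model ODE having the \emph{same} eigenvalue $\l$ and range containing that of $u$, giving $|\nabla u|\le\phi'(\Psi(u))$ with no ratio; the conclusion $\l\ge\mu_p(n,k,D)$ then comes from $D\ge b-a$ plus the monotonicity of the first Neumann eigenvalue of the model in the interval length (Proposition \ref{prop4.5}), not from the arithmetic you wrote. Moreover, for $k>0$ and $p\neq2$ no gradient comparison of this type is known: the maximum-principle argument uses the sign of $k$ in an essential way (see the final contradiction in the proof of Theorem \ref{Thm gradient comparison}, which needs $\kappa<0$), and the present paper leaves the corresponding weighted case $p>2$, $\kappa>0$ as a conjecture. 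Matei's proof of (i), including the Obata-type rigidity, goes through the L\'evy--Gromov isoperimetric inequality and symmetrization, not through a $P$-function.

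\textbf{The range-matching step is missing.} Since the eigenvalue equation is homogeneous you may multiply $u$ by constants but not add constants, so you can normalize $u_{\min}=-1$ but not $u_{\max}=1$; generically $u_{\max}\in(0,1)$. Your model with $w(a)=-1$, $w(b)=1$ therefore does not have the same range as $u$, and your integration along the minimizing geodesic only yields $D\ge\Psi(u_{\max})-\Psi(u_{\min})$, which is strictly smaller than the full length of the symmetric model interval when $u_{\max}<1$. The essential (and hardest) part of the Naber--Valtorta argument for $k<0$ is precisely to repair this: one varies the left endpoint $a$ of the initial value problem to produce a solution increasing from $-1$ to exactly $u_{\max}$ (a shooting/continuity argument, Propositions \ref{prop4.2} and \ref{prop4.3}), and then proves that the length of this asymmetric interval is at least the length $\bar\delta$ of the symmetric interval with the same first Neumann eigenvalue (Proposition \ref{prop4.4}, the analogue of \cite[Prop.~43]{NV14}, which uses that the drift is odd and convex). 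Dismissing this as an ``additional technical point'' understates it; without it the proof of part (iii) does not go through.
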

Matei\cite{Matei00} proved part (i) of the above theorem, generalizing the well-known Lichnerowicz-Obata theorem(see for example\cite[Theorem 5.1]{Libook}) to the case of $p$-Laplacian. Part (ii) was due to Valtorta\cite{Valtorta12} and the equality occurs if and only if $M$ is a one-dimensional circle (when $M$ has no bournday) or a line segment of length $D$ (when $M$ has boundary).
Part (iii) was proved by Naber and Valtorta\cite{NV14}.
%Unlike in part (i) and (ii), the
The lower bound in part (iii) is never attained, but it is sharp in the sense that one can build a sequence of Riemannian manifolds $M_i$ with $\Ric \geq (n-1)k$ and $diam(M_i) \to D$ such that $\lim_{i\to \infty} \l_{p}(M_i) =\mu_{p}(n,k,D)$. Geometrically, the $M_i$'s collapse to the one-dimensional interval $[-D/2,D/2]$.

It is a natural question to extend the above-mentioned theorems to the case of weighted $p$-Laplacian. Some non-sharp lower bounds for $\l_{p,f}$ have already been obtained by Wang\cite{Wang12} and by Wang and Li\cite{WL16}.
The aim of this work is to prove sharp lower bound estimates for the first nonzero eigenvalue of the weighted $p$-Laplacian operator on Bakry-Emery manifolds.
Our main theorem states
\begin{thm}\label{Thm Main}
Let $(M^n,g,f)$ be a compact Bakry-Emery manifold (possibly with smooth strictly convex boundary) with diameter $D$ and $\Ric +\nabla^2 f \geq \kappa \, g$ for $\kappa \in \R$.
Let $\l_{p,f}$ be the first nonzero eigenvalue of the weighted $p$-Laplacian $\Delta_{p,f}$ (with Neumann boundary conditions if $\partial M \neq \emptyset$).
Assume that either $1<p\leq 2$ or $\kappa \leq 0$,
then we have
\begin{equation*}
    \lambda_{p,f} \geq \mu_p(\kappa,D),
\end{equation*}
where $\mu_p(\kappa,D)$ is the first nonzero Neumann eigenvalue of the corresponding one-dimensional problem
\begin{equation}\label{ODE}
    (p-1)|\vp'|^{p-2}\vp'' - \kappa\, t \, |\vp'|^{p-2}\vp' =-\mu_p(\kappa,D) |\vp|^{p-2}\vp
\end{equation}
on $[-D/2,D/2]$
\end{thm}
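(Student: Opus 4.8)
The plan is to split the argument according to the two hypotheses, since the abstract already advertises two quite different techniques.

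\textbf{Case $1<p\le 2$ via modulus of continuity estimates.}
First I would reduce to the closed case by a standard reflection/doubling argument across the strictly convex boundary, so that eigenfunctions extend with controlled regularity and the Neumann condition becomes interior. Next, given a first eigenfunction $u$ with $\Delta_{p,f}u = -\lambda_{p,f}|u|^{p-2}u$, normalized so that $\min u = -1 < \max u \le 1$ (or using the normalization that makes the comparison tightest), I would introduce the one-dimensional model: let $\varphi$ solve \eqref{ODE} on $[-D/2,D/2]$ with eigenvalue $\bar\mu = \mu_p(\kappa,D)$, normalized so that $\varphi$ is increasing with $\varphi(\pm D/2)=\pm 1$ (matching the range of $u$). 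The heart of the method is to show that $\psi(t):=\varphi(\varphi^{-1}(\cdot))$ — more precisely the function $\omega$ built from $\varphi$ — is a modulus of continuity for $u$ that is preserved under the natural parabolic flow $u_t = \Delta_{p,f}u/((p-1)|\nabla u|^{p-2})$ or, in the elliptic formulation, that the inequality $u(x)-u(y)\le 2\varphi\big((d(x,y)-D)/2 + \text{something}\big)$ propagates. I would run the argument of Andrews--Clutterbuck / Andrews--Ni: suppose the modulus of continuity is first violated at points $x_0\ne y_0$; at such a point one gets, from the second-order conditions and the Bochner-type inequality along the minimizing geodesic, that $\Ric+\nabla^2 f \ge \kappa g$ forces the one-dimensional drift term $-\kappa t\,|\varphi'|^{p-2}\varphi'$ to dominate, yielding a contradiction with $\varphi$ solving \eqref{ODE}. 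The restriction $p\le 2$ enters because the relevant differential inequality for the $p$-Laplacian symbol, when combined with the curvature term, has the right sign only when $p\le 2$ (the degenerate versus singular nature of $|\nabla u|^{p-2}$ flips an inequality otherwise). Having established the sharp modulus of continuity, evaluating it at the max and min points of $u$ and using $\int |u|^{p-2}u\, e^{-f}d\mu = 0$ forces $\lambda_{p,f}\ge \bar\mu$.

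\textbf{Case $\kappa\le 0$ via gradient comparison.}
Here $p$ is arbitrary in $(1,\infty)$, and I would follow Naber--Valtorta's strategy adapted to the weighted setting. Normalize $u$ so that $-1 = \min u \le \max u \le 1$; pick the model function $\varphi$ solving \eqref{ODE} on a possibly larger interval $[-b,b]$ with the same eigenvalue $\lambda = \lambda_{p,f}$, normalized by $\varphi(0)=0$, $\varphi'(0)>0$, and such that the range of $\varphi$ contains $[\min u,\max u]$. The key sharp statement is the \emph{gradient comparison theorem}: $|\nabla u|(x) \le \varphi'(\varphi^{-1}(u(x)))$ pointwise on $M$. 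To prove it, set $P := |\nabla u|^2 - (\varphi'\circ\varphi^{-1}(u))^2$ (or the appropriate $p$-weighted analogue), compute $\Delta_{p,f}$-type Bochner formula for $P$ — this is where the Bakry--Emery Bochner identity $\frac12\Delta_f|\nabla u|^2 = |\nabla^2 u|^2 + \langle\nabla u,\nabla \Delta_f u\rangle + (\Ric+\nabla^2 f)(\nabla u,\nabla u)$ replaces the classical one — and show that at an interior maximum of $P$ with $P>0$ one derives a contradiction using the refined Kato inequality for $p$-harmonic-type functions and the ODE satisfied by $\varphi$. The sign $\kappa\le 0$ is used to control the now-unfavorable curvature term; when $\kappa>0$ this step genuinely fails (consistent with the theorem's hypotheses). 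Once the gradient comparison holds, one integrates: along a path realizing the oscillation of $u$, $\int d\varphi^{-1}(u) \le$ (something involving $D$), which after a careful analysis of the ODE \eqref{ODE} — monotonicity of the relevant period/diameter function in $b$, behavior of $\varphi$ near its turning points, and a continuity/compactness argument to legitimately take $\lambda = \lambda_{p,f}$ — forces $b \le D/2$, hence $\lambda_{p,f} = \lambda \ge \mu_p(\kappa,D)$ by monotonicity of $\mu_p$ in the interval length.

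\textbf{Main obstacles.}
I expect the principal difficulty to be the one-dimensional model analysis for $\kappa\le 0$: showing that \eqref{ODE} has a well-defined first Neumann eigenvalue, that solutions have the needed concavity/monotonicity so that $\varphi^{-1}$ is well-defined on the relevant range, and that the "diameter function" $b\mapsto$ (length of the nodal-type interval) is continuous and strictly monotone so the comparison can be calibrated and the limiting eigenvalue extracted. A secondary obstacle is handling the low regularity of $u$ where $\nabla u = 0$ (the singular/degenerate set of $\Delta_{p,f}$): the Bochner computation and the maximum principle for $P$ must be justified either by working on the open set $\{\nabla u\ne 0\}$ and controlling the critical set (it has measure zero and, by elliptic regularity for the $p$-Laplacian, $u\in C^{1,\alpha}\cap W^{2,2}_{loc}$ off a small set), or by a viscosity/approximation argument regularizing the operator. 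The boundary case — verifying that strict convexity of $\partial M$ together with the Neumann condition prevents $P$ or the modulus of continuity from being violated at $\partial M$ — is routine but must be included in both cases.
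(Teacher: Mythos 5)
Your overall architecture coincides with the paper's: modulus-of-continuity estimates for $1<p\le 2$, and a sharp gradient comparison plus one-dimensional analysis for $\kappa\le 0$. One genuine difference of route: you propose to prove the gradient comparison by the classical single-point maximum principle applied to $P=|\nabla u|^2-\bigl(\varphi'\circ\varphi^{-1}(u)\bigr)^2$ with the weighted Bochner formula and a refined Kato inequality (the Naber--Valtorta scheme), whereas the paper proves the stronger two-point inequality $\Psi(u(y))-\Psi(u(x))\le d(x,y)$ by Andrews' two-point maximum principle; the latter is shorter and automatically stays away from the critical set, since the first-variation identities at the two-point maximum force $\nabla u\ne 0$ at both points. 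Your route is viable in principle, but it would require redoing the delicate Naber--Valtorta analysis (linearized operator, behavior near $\{\nabla u=0\}$) in the weighted setting, which is precisely what the paper's method avoids.

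The genuine gap is in the endgame of the $\kappa\le 0$ case. You fix a symmetric model $\varphi$ on $[-b,b]$ with eigenvalue $\lambda_{p,f}$ whose range merely \emph{contains} $[\min u,\max u]$ and want to conclude $b\le D/2$. But integrating the gradient comparison between the minimum and maximum points of $u$ only yields $\varphi^{-1}(u_{\max})-\varphi^{-1}(u_{\min})\le D$, and since nothing guarantees that $u_{\max}$ reaches the model's maximum (after normalizing $u_{\min}=-1$ one only knows $u_{\max}\in(0,1]$), in general $\varphi^{-1}(u_{\max})<b$ and the resulting inequality is strictly weaker than $2b\le D$. Closing this is exactly the content of the paper's Section 4: introduce the translated, asymmetric models $w_a$ on $[a,b(a)]$ with $w_a(a)=-1$, show via the Pr\"ufer transformation that their maxima $m(a)$ sweep out all of $(0,1]$ (Propositions 4.2 and 4.3), so that a model with range exactly $[-1,u_{\max}]$ and the same eigenvalue exists, and then prove that the symmetric interval has the smallest length, $\delta(a)\ge\delta(-\bar a)$ (Proposition 4.4, which uses that $-\kappa t$ is odd and convex), before invoking the monotonicity of the model eigenvalue in the interval length. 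Your sketch only invokes continuity/monotonicity of the symmetric period function, so this range-matching step --- the crux of the sharp estimate --- is missing. Two smaller points: the proposed reduction to the closed case by doubling across the strictly convex boundary does not work (the doubled metric is only Lipschitz and the Bakry--Emery condition does not extend classically); the boundary must be handled inside the maximum principle via the Neumann condition and strict convexity, as you yourself suggest later. And in the case $1<p\le 2$, the precise mechanism where $p\le 2$ enters is the convexity of $s\mapsto|s|^{-\frac{p-2}{p-1}}s$ for $s>0$, used to compare the normalized speeds at the two points, and the eigenvalue bound is then extracted from the exponential decay rates of the solution and of the time-dependent model, not merely from evaluating the modulus at the extremum points.
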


When $p=2$, this theorem covers the result of Bakry and Qian mentioned in Theorem \ref{Thm BQ} for $\kappa \leq 0$.
When $\kappa=0$ and $f\equiv 0$, the theorem reduces to the result of Valtorta\cite{Valtorta12}. This also demonstrates the sharpness of the lower bound when $\kappa=0$ for any $1<p<\infty$, as it is achieved when $M$ is a one-dimensional circle of diameter $D$ or the line segment $[-D/2, D/2]$, with $f\equiv 0$. For $\kappa <0$, the sharpness can be shown by constructing for each $\e >0$, a Bakry-Emery manifold $(M,g,f)$ with diameter $D$ and $\l_{p,f} \leq \mu_p(\kappa, D) +\e$. This is a slight modification of the construction in\cite{AN12}, with the only difference being replacing the ODE \eqref{ODE p=2} with the ODE \eqref{ODE}.

We conjecture that the above theorem holds for $p >2$ and $\kappa >0$ as well.
In this case,
%For the remaining case $p >2$ and $\kappa > 0$,
we provide a non-sharp lower bound, which extends \cite[Theorem 3.2]{Matei00} and improves \cite[Theorem 1.1]{WL16}. %It also has the advantage of being explicit.
\begin{thm}\label{Thm Main 2}
Let $(M,g,f)$ and $\l_{p,f}$ be the same as in Theorem \ref{Thm Main}. Suppose that $p\geq 2$ and $\kappa >0$, then
\begin{equation*}
    \l_{p,f} \geq \left(\frac{\kappa}{p-1} \right)^{\frac{p}{2}}.
\end{equation*}
\end{thm}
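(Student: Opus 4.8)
The plan is to run a Bochner-type (Reilly-type) argument directly on a normalized first eigenfunction $u$ of $\Delta_{p,f}$, following the scheme of Matei \cite{Matei00} but carrying the drift term through the Bakry-Emery inequality. First I would normalize so that $\max_M |u| = 1$, and work on the open set $\{\nabla u \neq 0\}$ where the operator is non-degenerate and $u$ is smooth by elliptic regularity; the singular set has measure zero and can be handled by the usual cutoff/approximation arguments (e.g.\ replacing $|\nabla u|^{p-2}$ by $(|\nabla u|^2+\eps)^{(p-2)/2}$ and letting $\eps \to 0$), so I would only sketch that at the end. On this set, write the eigenvalue equation as $(p-1)|\nabla u|^{p-4}\nabla^2 u(\nabla u, \nabla u) + |\nabla u|^{p-2}\Delta_f u = -\l_{p,f}|u|^{p-2}u$.

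The heart of the matter is a Bochner formula for $w := |\nabla u|^p$ (or equivalently for $|\nabla u|^2$ weighted appropriately). Applying $\Delta_f$ to $\tfrac{1}{2}|\nabla u|^2$ gives the weighted Bochner identity
\begin{equation*}
\tfrac{1}{2}\Delta_f |\nabla u|^2 = |\nabla^2 u|^2 + \langle \nabla u, \nabla \Delta_f u\rangle + (\Ric + \nabla^2 f)(\nabla u, \nabla u).
\end{equation*}
Using $\Ric + \nabla^2 f \geq \kappa g$ controls the last term from below by $\kappa |\nabla u|^2$; the Cauchy-Schwarz refinement $|\nabla^2 u|^2 \geq \tfrac{1}{n}(\Delta u)^2$ is the version used in the unweighted case, but here one instead wants the sharper pointwise inequality that isolates the $|\nabla u|^{p-2}$-weighted Hessian direction appearing in the equation — precisely the algebraic inequality exploited by Matei relating $|\nabla^2 u|^2$ to $(\nabla^2 u(\tfrac{\nabla u}{|\nabla u|},\tfrac{\nabla u}{|\nabla u|}))^2$ and to $|\nabla |\nabla u||^2$. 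Differentiating the eigenvalue equation to substitute for $\langle \nabla u, \nabla \Delta_f u\rangle$, then integrating against the weighted measure $e^{-f}\,d\mu$ with test weight $|\nabla u|^{p-2}$ (or a power thereof) and integrating by parts — here the strict convexity of $\p M$ makes the boundary term have a favorable sign under the Neumann condition, exactly as in the $p=2$ Reilly argument — should yield, after collecting terms and using $p \geq 2$ to guarantee the relevant coefficients are nonnegative, an inequality of the form
\begin{equation*}
\left(\l_{p,f}^{2/p} - \frac{\kappa}{p-1}\right)\int_M |\nabla u|^p\, e^{-f}\, d\mu \;\geq\; (\text{nonnegative terms}),
\end{equation*}
which forces $\l_{p,f}^{2/p} \geq \kappa/(p-1)$, i.e.\ the claimed bound.

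The main obstacle, as usual in the $p$-Laplacian setting, is twofold: making the degenerate-elliptic computation rigorous across the critical set $\{\nabla u = 0\}$, and getting the algebra of the weighted Bochner terms to close with the right constant. For the first, I would use the standard regularization, verify that the eigenfunction and its $\eps$-perturbed gradient are smooth enough away from a small set, and pass to the limit in the integral inequality (this is where Sakaguchi-type regularity for $p$-harmonic-type equations is invoked, as in \cite{Valtorta12}, \cite{NV14}); the Bakry-Emery weight $e^{-f}$ is smooth and bounded on compact $M$, so it introduces no new analytic difficulty. For the second, the constraint $p \geq 2$ is exactly what is needed so that terms like $(p-2)|\nabla u|^{p-4}\langle \nabla |\nabla u|^2, \cdot\rangle$ carry the correct sign; the point where $\kappa > 0$ enters essentially (rather than $\kappa \leq 0$) is that the sign of the $\kappa|\nabla u|^p$ term must help rather than hurt, which is why this weaker, non-sharp estimate — not the sharp $\mu_p(\kappa,D)$ — is what the Bochner method delivers in this regime.
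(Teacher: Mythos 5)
Your backbone coincides with the paper's proof: the Bochner formula for $\Delta_f$, tested against $|\nabla u|^{p-2}e^{-f}d\mu$, the hypothesis $\Ric+\nabla^2 f\geq\kappa\, g$, the sign $p\geq 2$ gives to the term $\frac{p-2}{4}\int_M|\nabla u|^{p-4}\bigl|\nabla|\nabla u|^2\bigr|^2e^{-f}d\mu\geq 0$, and the Neumann/convexity hypotheses disposing of boundary terms. But as written the sketch has a genuine gap at the decisive step: after the Bochner and curvature steps, ``collecting terms'' only yields
$\kappa\int_M|\nabla u|^pe^{-f}d\mu\leq(p-1)\lambda_{p,f}\int_M|u|^{p-2}|\nabla u|^2e^{-f}d\mu$,
which by itself does not compare $\lambda_{p,f}^{2/p}$ with $\kappa/(p-1)$. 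The exponent $2/p$ comes from two further ingredients your proposal never names: H\"older's inequality with exponents $\frac{p}{p-2}$ and $\frac{p}{2}$, giving
$\int_M|u|^{p-2}|\nabla u|^2e^{-f}d\mu\leq\bigl(\int_M|u|^pe^{-f}d\mu\bigr)^{\frac{p-2}{p}}\bigl(\int_M|\nabla u|^pe^{-f}d\mu\bigr)^{\frac{2}{p}}$,
combined with the fact that the Rayleigh quotient of the eigenfunction equals $\lambda_{p,f}$ (test the equation with $u$), i.e. $\int_M|\nabla u|^pe^{-f}d\mu=\lambda_{p,f}\int_M|u|^pe^{-f}d\mu$. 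These two facts are exactly how the paper converts the displayed inequality into $\kappa\leq(p-1)\lambda_{p,f}^{2/p}$; without them the argument does not close.

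A second, more mechanical issue: you propose to ``differentiate the eigenvalue equation to substitute for $\langle\nabla u,\nabla\Delta_f u\rangle$,'' but the equation governs $\Delta_{p,f}u$, not $\Delta_f u$, so no such pointwise substitution is available. The clean move (and the paper's) is integration by parts under the Neumann condition,
$\int_M|\nabla u|^{p-2}\langle\nabla u,\nabla\Delta_f u\rangle e^{-f}d\mu=-\int_M\Delta_{p,f}u\,\Delta_f u\,e^{-f}d\mu$,
after which one inserts the undifferentiated equation $\Delta_{p,f}u=-\lambda_{p,f}|u|^{p-2}u$ and integrates by parts once more to reach $(p-1)\lambda_{p,f}\int_M|u|^{p-2}|\nabla u|^2e^{-f}d\mu$. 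Finally, two of your technical devices are unnecessary for this non-sharp bound: no refined Kato/Matei-type Hessian inequality is needed (the paper simply discards $|\nabla^2u|^2\geq 0$), and no $\eps$-regularization of $|\nabla u|^{p-2}$ or analysis of the critical set enters the paper's argument, which runs entirely through the integral identities above.
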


%Despite many attempts , this has not been achieved.

We conclude this section by discussing the strategy of our proof and the organization of this paper.

In Section 2, we give a simple proof of Theorem \ref{Thm Main} for the case $1<p\leq 2$ using the
modulus of continuity estimates method of Andrews and Clutterbuck. This effective approach and its elliptic version has been successfully used in proving sharp lower bounds of the first nonzero eigenvalue\cite{AC13}, and the fundamental gap conjecture\cite{AC11}(see also \cite{Ni13} for an elliptic proof), and the fundamental gap conjecture for convex domains in the sphere\cite{SWW19}.

%to prove the fundamental gap conjecture,

%Andrews\cite[Section 8]{Andrewssurvey15} outlined a parabolic proof via modulus of continuity estimates of the sharp estimates for $\l_{p}$ stated in Theorem \ref{Thm p-Laplacian}, which also works only for $1< p\leq 2$. A slight modification then gives a parabolic proof of Theorem \ref{Thm Main} for the case $1<p\leq 2$.

%elliptic method that was initiated by Ni\cite{Ni13} to give an alternative proof of the fundamental gap conjecture proved by Andrews and Clutterbuck\cite{AC11} using parabolic methods.
%The elliptic method was also used by Zhang and the second author\cite{WZ17} to give an alternative proof of Theorem \ref{Thm first eigenvalue for Laplacian} using ideas from\cite{AC13}, and by Seto, Wang, and Wei\cite{SWW19} to prove the fundamental gap conjecture on convex domains in the sphere. For the unweighted case, i.e., assuming $f=\text{const}$,

%Andrews\cite[Section 8]{Andrewssurvey15} outlined a parabolic proof via modulus of continuity estimates of the sharp estimates for $\l_{1,p}$ stated in Theorem \ref{Thm p-Laplacian}, which also works only for $1< p\leq 2$. A slight modification then gives a parabolic proof of Theorem \ref{Thm Main} for the case $1<p\leq 2$.

%It seems both methods work only for $1 <p \leq 2$.
%One can also give  via modulus of continuity estimates, by adapting the proof outlined by Andrews\cite[Section 8]{Andrewssurvey15}, which also works only for $1<p\leq 2$.

In Section 3, we prove sharp height-dependent gradient estimates for eigenfunctions of $\Delta_{p,f}$.
%the weighted $p$-Laplacian.
%associated to the first nonzero eigenvalue of $\lambda_{p,f}$.
Such sharp gradient estimates are key ingredients in obtaining sharp eigenvalues estimates, as demonstrated by Bakry-Qian\cite[Section 5]{BQ00} for the $f$-Laplacian, Valtorta\cite[Theorem 4.1]{Valtorta12}, and Naber and Valtorta\cite[Theorem 16]{NV14} for the $p$-Laplacian. Our approach via the two-point maximum principle developed by Andrews\cite[Section 7]{Andrewssurvey15} is quite different from the approaches in \cite{BQ00}\cite{NV14}\cite{Valtorta12}. The wonderful survey\cite{Andrewssurvey15} discusses more for the application of the maximum principle functions depending on several points or to functions depending on the global structure of the solutions.

Section 4 is devoted to investigating the qualitative behavior of the one-dimensional model equation \eqref{ODE}.
We shall prove that for every eigenfunction of $\Delta_{p,f}$, there always exists an interval $[a,b]$ among all intervals which has the same eigenvalue as $u$ for the one-dimensional model, and such that the corresponding eigenfunction has the same range as the eigenfunction $u$.
%It is this result that allows us to compare the eigenvalue of $\Delta_{p,f}$ to one of the eigenvalues of the one-dimensional models with the same diameter.

With all the preparations in Sections 3 and 4, we prove Theorem \ref{Thm Main} for the case $\kappa \leq 0$ in Section 5.
%, based on the preparation in Sections 3 and 4.
%We also demonstrate the sharpness of the theorem in section 5 by constructing a sequence of Bakry-Emery manifolds $(M_i,g_i,f_i)$ with $\Ric +\nabla^2 f \geq \kappa \,g$ and $diam(M_i) \to 0$ such that $\lambda_i \to \bar{\lambda}$. As $i\to \infty$, we see that $Vol(M_i) \to 0$ with $M_i$ collapsing geometrically to the one-dimensional interval $[-D/2,D/2]$.

In Section 6, we prove the non-sharp lower bound in Theorem \ref{Thm Main 2}. The proof uses only the Bochner formula for the $f$-Laplacian and integration by parts.

%It provides
%a non-sharp lower bound for the case $p\geq 2$ and $\kappa >0$. The proof uses the Bochnor formula for the $f$-Laplacian and integration by parts.

\section{The case $1<p \leq 2$}
In this section, we apply the modulus of continuity estimates to prove the sharp eigenvalue estimates for $1<p\leq 2$. The proof presented below is a modification of the argument outlined in the survey by Andrews\cite[Section 8]{Andrewssurvey15} for the special case $f\equiv 0$.
Recall that given a continuous function $u: M\rightarrow \R$,  $w$ is a modulus of continuity for $u$ if for all $x$ and $y$ in $M$,
$$
|u(y)-u(x)|\le 2 w\left(\frac{d(x,y)}{2}\right),
$$
where $d$ is the induced distance function on $(M,g)$.

\begin{thm}\label{thmb1}
Let $1<p \leq 2$.  Let $(M,g,f)$ be a compact Bakry-Emery manifold (possibly with smooth strictly convex boundary) with diameter $D$ and $\Ric +\nabla^2 f \geq \kappa \, g$ for some $\kappa \in \R$. Let $v(x,t):M\times [0,+\infty)\rightarrow \R$ be a $C^{2,1}$ solution to
\begin{equation}\label{eqb2}
\frac{\p v}{\p t}=|\Delta_{p,f} v|^{-\frac{p-2}{p-1}}\Delta_{p,f} v
\end{equation}
with Neumann boundary conditions if $\partial M$ is not empty. Suppose $v(x,0)$ has a modulus of continuity
$\vp_0(s):[0, \frac{D}{2}]\rightarrow \R$ with $\vp_0(0)=0$ and $\vp_0'(s)>0$ on $[0,\frac{D}{2}]$.
Assume further that there exists a function $\vp(s,t):[0,\frac D 2]\times \R_+\rightarrow \R$, satisfying the following properties:
\begin{itemize}
\item[(1)] $\vp(s,0)=\vp_0(s)$ on $[0,\frac D 2]$;
\item [(2)]$0\ge \frac{\p \vp}{\p t}\ge |\mathcal{L}_{p,\kappa}(\varphi)|^{-\frac{p-2}{p-1}}\mathcal{L}_{p,\kappa}(\vp) $ on $[0,\frac D 2]\times \R_+$;
\item [(3)] $\vp'(s,t)>0$ on $[0,\frac D 2]$;
\item [(4)] $\vp(0,t)\ge 0$ for each $t>0$.
\end{itemize}
Here $\vp'=\frac{\p}{\p s}\vp(s,t)$, $\vp''=\frac{\p^2}{\p s^2}\vp(s,t)$, and $\mathcal{L}_{p,\kappa}(\vp)=(p-1)|\vp'|^{p-2}\vp''- \kappa \, s|\vp'|^{p-2}\vp'$.
Then $\varphi(s,t)$ is a modulus of the continuity of $v(x,t)$ for all $t>0$.
\end{thm}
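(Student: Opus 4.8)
The plan is to run Andrews' two-point maximum principle, comparing the solution $v(\cdot,t)$ to the one-dimensional model at the level of moduli of continuity. I would argue by contradiction: suppose $\varphi(s,t)$ fails to be a modulus of continuity for $v(\cdot,t)$ for some $t>0$. Since property (1) gives that $\varphi_0$ is a modulus of continuity at $t=0$ and everything is smooth, there is a first time $t_0>0$ and points $x_0\neq y_0$ in $M$ at which the quantity
\[
Z(x,y,t):=v(y,t)-v(x,t)-2\varphi\!\left(\frac{d(x,y)}{2},t\right)
\]
first touches zero, i.e.\ $Z(x_0,y_0,t_0)=0$ and $Z\le 0$ on $M\times M\times[0,t_0]$. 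Here one must first rule out degenerate cases: if $x_0$ or $y_0$ lies on $\partial M$ then the Neumann condition together with strict convexity of the boundary forces $\partial_\nu Z$ to have an unfavorable sign (this is the standard Hopf-type argument used in \cite{AC13} and \cite{Andrewssurvey15}), so the maximum is interior; and if $x_0=y_0$ then, using $\varphi'(0,t)>0$ from (3) and $\varphi(0,t)\ge 0$ from (4), one checks $Z<0$ near the diagonal, another contradiction. So $x_0\neq y_0$ are interior points and $x_0,y_0$ are not conjugate along a minimizing geodesic joining them.

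The core of the argument is then the first- and second-derivative tests at $(x_0,y_0,t_0)$. From the spatial first-order conditions one obtains that $\nabla v$ at $x_0$ and $y_0$ are both parallel to the minimizing geodesic $\gamma$ from $x_0$ to $y_0$, with common magnitude $|\nabla v|(x_0)=|\nabla v|(y_0)=\varphi'(s_0,t_0)>0$ where $s_0=d(x_0,y_0)/2$; in particular the denominators $|\nabla v|^{p-2}$ are harmless at these points and $v$ is a genuine smooth solution there. The spatial second-order condition, applied to second variations along parallel vector fields transported along $\gamma$ together with the $\gamma'$-direction itself, produces an inequality relating $\Delta_{p,f} v(y_0)-\Delta_{p,f} v(x_0)$ to $4\varphi''(s_0,t_0)$ plus error terms. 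The crucial error term is the second variation of arclength, which is controlled from above by the Bakry--Émery Ricci bound: the Bochner-type bookkeeping contributes exactly the term $-\kappa\, s_0\,|\varphi'|^{p-2}\varphi'$ coming from integrating $\Ric+\nabla^2 f\ge\kappa g$ against the Jacobi fields along $\gamma$ (the $\nabla^2 f$ part is precisely what converts the dimension-dependent $(n-1)T_k$ term in the unweighted case into the linear term $\kappa s$). Combining, one gets
\[
\partial_t Z(x_0,y_0,t_0)\ \le\ |\Delta_{p,f}v(y_0)|^{-\frac{p-2}{p-1}}\Delta_{p,f}v(y_0)-|\Delta_{p,f}v(x_0)|^{-\frac{p-2}{p-1}}\Delta_{p,f}v(x_0)-2\,\partial_t\varphi(s_0,t_0),
\]
and the right side is $\le 0$ by property (2) together with monotonicity of $r\mapsto |r|^{-\frac{p-2}{p-1}}r$ and concavity/convexity estimates for this nonlinearity. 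But $Z$ attaining an interior maximum value $0$ at time $t_0$ after being negative before forces $\partial_t Z(x_0,y_0,t_0)\ge 0$, and in fact the strict parabolic maximum principle (or a small perturbation $\varphi+\varepsilon(1+t)$) yields a strict contradiction.

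The main obstacle, and where the restriction $1<p\le 2$ enters, is the analysis of the nonlinearity $\Phi(r)=|r|^{-\frac{p-2}{p-1}}r$ in the comparison step: one needs the map $\Phi$ to be monotone (true for all $p$) but also needs a one-sided Lipschitz/concavity-type control so that the difference of $\Phi$-values at $x_0$ and $y_0$ is dominated by $\Phi$ evaluated at the model's $\mathcal L_{p,\kappa}(\varphi)$; for $p>2$ the exponent $-\frac{p-2}{p-1}$ is negative and $\Phi$ is not Lipschitz near $0$, which is exactly why the modulus-of-continuity method in this form is limited to $1<p\le 2$ (and why the $\kappa>0$, $p>2$ case is only a conjecture). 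A secondary technical point is justifying the first-touching-time argument given that $v$ is only $C^{2,1}$ and that the distance function $d$ is merely Lipschitz globally; this is handled as usual by working in a neighborhood of $(x_0,y_0)$ where $d$ is smooth (non-conjugate, interior points) and by the standard barrier/perturbation device to make the touching strict. I would also remark that properties (3) and (4) are used only to control the behavior of $Z$ near the diagonal and at $s=0$, mirroring the role of the corresponding hypotheses in \cite{Andrewssurvey15}.
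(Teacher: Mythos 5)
Your proposal follows essentially the same route as the paper's proof: the two-point function $v(y,t)-v(x,t)-2\varphi\bigl(\tfrac{d(x,y)}{2},t\bigr)$ with a strict perturbation ($\e e^t$ in the paper), a first-touching-time argument, exclusion of boundary and diagonal touching via strict convexity, the Neumann condition and $\varphi'>0$, first and second variations of arclength in Fermi coordinates, and the Bakry--Emery hypothesis converting the Ricci integral together with the $\nabla f$ endpoint terms into the drift term, giving $\Delta_{p,f}v(y_0,t_0)\le \Delta_{p,f}v(x_0,t_0)+2\mathcal{L}_{p,\kappa}(\varphi)$. However, the one substantive step you leave as an appeal to ``monotonicity and concavity/convexity estimates'' is exactly where the paper needs its Lemma 2.2, and your account of it is not quite right. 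Writing $h(r)=|r|^{-\frac{p-2}{p-1}}r$, one must pass from the displayed inequality to $h\bigl(\Delta_{p,f}v(y_0,t_0)\bigr)-h\bigl(\Delta_{p,f}v(x_0,t_0)\bigr)\le 2\,h\bigl(\mathcal{L}_{p,\kappa}(\varphi)\bigr)$; monotonicity alone only gives $h\bigl(\Delta_{p,f}v(y_0,t_0)\bigr)\le h\bigl(\Delta_{p,f}v(x_0,t_0)+2\mathcal{L}_{p,\kappa}(\varphi)\bigr)$. What closes the argument is that for $1<p\le 2$ the function $h$ is odd, increasing, and convex on $(0,\infty)$ (its exponent is $\tfrac{1}{p-1}\ge 1$), hence superadditive on $[0,\infty)$, which yields $h(t-2\delta)-h(t)\le -2h(\delta)$ for all $t$ and $\delta\ge 0$; this is applied with $\delta=-\mathcal{L}_{p,\kappa}(\varphi)\ge 0$ (nonnegative by hypothesis (2) and monotonicity of $h$). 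Your stated mechanism for the restriction to $p\le 2$ --- failure of Lipschitz continuity of $h$ near $0$ when $p>2$ --- is not the actual obstruction: for $p>2$ the function $h$ is concave on $(0,\infty)$, so the key inequality reverses (test $t=2\delta$), and it is this loss of superadditivity, not regularity at the origin, that breaks the comparison. With this lemma stated and proved, your sketch coincides with the paper's argument.

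A secondary point: your assertion that $x_0$ and $y_0$ are not conjugate, so that $d$ is smooth near the touching point, is not justified (the endpoints of a minimizing geodesic may lie in each other's cut locus). The paper avoids this issue altogether: since $\varphi'>0$, the touching inequality upgrades to $v(\gamma(d),t)-v(\gamma(-d),t)-2\varphi\bigl(\tfrac{L[\gamma]}{2},t\bigr)-\e e^t\le 0$ for every smooth curve $\gamma$, with equality at $\gamma_0$ at time $t_0$, and all first and second derivative tests are then taken along smooth variations of curves, where $L[\gamma]$ is smooth in the variation parameter. This is the cleaner way to handle the low regularity of $d$, and it is precisely what hypothesis (3) is for.
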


We state and prove an elementary lemma that will be used in the proof. 
\begin{lemma}\label{lmb1}
Let $f:\R \to \R$ be an increasing and odd function. Assume that $f(t)$ is convex for $t>0$. Then for any $\delta\ge 0$ and $t\in \R$, we have
\begin{equation}\label{eqb1}
f(t-2\delta)-f(t)\le -2f(\delta).
\end{equation}
\end{lemma}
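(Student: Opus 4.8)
\emph{Proof plan.} The idea is to use oddness to push everything onto the half-line $[0,\infty)$, where $f$ is convex, and then invoke only the elementary theory of convex functions. First I would rewrite \eqref{eqb1}: since $f$ is odd, $-f(t-2\delta)=f(2\delta-t)$, so \eqref{eqb1} is equivalent to
\[ f(t)+f(2\delta-t)\ \ge\ 2f(\delta), \]
which, as $t$ and $2\delta-t$ have arithmetic mean $\delta$, is a midpoint-convexity inequality — but one that is only asserted when the midpoint $\delta$ is nonnegative. Two painless preliminary reductions: the case $\delta=0$ is trivial (both sides of \eqref{eqb1} vanish), and the quantity $f(t)-f(t-2\delta)$ is invariant under the substitution $t\mapsto 2\delta-t$ (once more by oddness), so I may assume $\delta>0$ and $t\ge\delta$.

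Under this normalization I would split the region $\{t\ge\delta\}$ into two cases. If $t\ge 2\delta$, then the points $0\le t-2\delta$ and $\delta\le t$ all lie in $[0,\infty)$, and I apply the standard fact that for a convex function $\psi$ the divided difference $\tfrac{\psi(b)-\psi(a)}{b-a}$ is nondecreasing in both $a$ and $b$; applied to $\psi=f|_{[0,\infty)}$ on the intervals $[0,\delta]$ and $[t-2\delta,\,t]$ this yields $\tfrac{f(\delta)-f(0)}{\delta}\le \tfrac{f(t)-f(t-2\delta)}{2\delta}$, and since $f(0)=0$ (forced by oddness, and the only place this is used) this is exactly \eqref{eqb1}. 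If instead $\delta\le t<2\delta$, then both $t$ and $2\delta-t$ lie in $(0,\infty)$, so plain midpoint convexity of $f$ there gives $2f(\delta)=2f\!\left(\tfrac{t+(2\delta-t)}{2}\right)\le f(t)+f(2\delta-t)=f(t)-f(t-2\delta)$, again \eqref{eqb1}. These two cases exhaust $\{t\ge\delta\}$ since $\delta>0$.

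The main (and really the only) obstacle is the sign book-keeping around the "fold-over": when $t-2\delta<0$ one cannot compare divided differences directly on $[0,\infty)$, and this is precisely where oddness must be used — first to bring $t$ into the range $t\ge\delta$, then to re-express $f(t-2\delta)$ through the value at the positive point $2\delta-t$ — after which one is squarely in the world of convex functions on a half-line. I would also note that nothing beyond the definition of convexity enters, so no differentiability of $f$ is needed.
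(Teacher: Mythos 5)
Your proof is correct. It runs on the same two ingredients as the paper's argument --- oddness to fold everything onto $[0,\infty)$ and elementary convexity there --- but organizes them differently: the paper first derives the superadditivity inequality $f(a)+f(b)\le f(a+b)$ for $a,b\ge 0$ (from convexity and $f(0)=0$) and then treats three cases, $t\ge 2\delta$, $0\le t<2\delta$, and $t<0$; you instead note that $f(t)-f(t-2\delta)$ is invariant under $t\mapsto 2\delta-t$, which reduces everything to $t\ge\delta$ and eliminates the paper's third case, and on $t\ge 2\delta$ you replace superadditivity by monotonicity of divided differences of a convex function, comparing the chords over $[0,\delta]$ and $[t-2\delta,t]$. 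Your remaining case $\delta\le t<2\delta$ coincides with the paper's Case 2 (midpoint convexity after rewriting $-f(t-2\delta)=f(2\delta-t)$). The trade-off is minor: the paper's superadditivity lemma is a clean, reusable statement, while your symmetry reduction is a bit more economical (two cases instead of three) and makes it transparent that $\delta$ is the pivot point. One shared caveat, not a gap on your part: both arguments invoke convexity on segments having $0$ as an endpoint (via $f(0)=0$ in a convex combination, respectively the divided difference over $[0,\delta]$), i.e.\ convexity up to the closed endpoint; this is harmless for the intended application $h(t)=|t|^{-\frac{p-2}{p-1}}t$, which is continuous at $0$.
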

\begin{proof}
We have $f(0)=0$ since $f$ is odd. The convexity implies for $a, b > 0$, $$f(a)=f\left(\frac{a}{a+b} (a+b)+\frac{b}{a+b} 0 \right) \leq \frac{a}{a+b}f(a+b).$$
Similarly, $$f(b) \leq \frac{b}{a+b}f(a+b),$$
and we conclude that for all $a,b \geq 0$, it holds
$$f(a)+f(b) \leq f(a+b). $$  
Case 1: $t\ge 2 \delta$. Then
$$
f(t-2\delta)+2f(\delta) \le f(t-2\delta) +f(2\delta) \le f(t).
$$
Case 2: $0\le t<2\delta$. Then by oddness and the convexity,
$$
f(t)-f(t-2\delta) =f(t)+f(2\delta-t)\ge 2 f \left( \frac{t+(2\delta-t)}{2}\right)=2f(\delta).
$$
%Using the oddness of $f$, we have  
%$$
%f(t)-f(t-2\delta)\ge 2f(\delta).
%$$
Case 3: $t<0$. Then
$$
2f(\delta)+f(-t)\le f(2\delta) +f(-t) \le  f(2\delta-t).
$$
\end{proof}

\begin{proof}[Proof of Theorem \ref{thmb1}]
Consider
$$
C_\e(x,y,t) := v(y,t)-v(x,t)-2\vp\left(\frac{d(x,y)}{2},t\right)-\e e^t.
$$
It suffices to show that $C_\e\le 0$ for any $\e>0$.
We argue by contradiction and assume that there exists $(x_0,y_0,t_0)$ such that $C_\e$ reaches zero for the first time. In other words, $C_\e(x,y,t)$ attains its maximum zero on $M \times M \times [0,t_0]$ at $(x_0,y_0,t_0)$.
Clearly $x_0\neq y_0$. The strict convexity of $\partial M$, the Neumann condition   and the positivity of $\vp'$ rule out the possibility that $(x_0, y_0) \in \p (M\times M)$.
By the first derivative test, the time derivative inequality yields
$$
v_t(y,t)-v_t(x,t)-2\vp_t-\e e^t\ge 0
$$
at  $(x_0, y_0, t_0)$.  Using the equation \eqref{eqb2}, we have
\begin{equation}\label{eqb3}
\left.|\Delta_{p,f} v|^{-\frac{p-2}{p-1}}\Delta_{p,f} v\right|_{(y_0, t_0)}-\left.|\Delta_{p,f} v|^{-\frac{p-2}{p-1}}\Delta_{p,f} v\right|_{(x_0, t_0)}-2\vp_t(d_0,t_0)\ge \e e^{t_0},
\end{equation}
where $d_0=\frac{d(x_0,y_0)}{2}$.
The positivity of $\vp'$  also gives that for any $t\le t_0$,
\begin{equation}\label{eqb4}
   v(\gamma(d),t) -v(\gamma(-d),t) - 2 \vp\left(\frac{L[\gamma]}{2},t \right) -\e e^t \leq 0
\end{equation}
for  any smooth path $\gamma:[-d,d] \to M$, where $L[\gamma]$ denotes the length of $\gamma$.
Moreover, the equality in \eqref{eqb4} holds when $t=t_0$ and $\gamma =\gamma_0$, a unit speed minimizing geodesic from $x_0$ to $y_0$. This allows us to deduce inequalities from the first and second variations along any smooth family of curves passing through $\gamma_0$. Let $\gamma(r,s)$ be a family of smooth curves with $\gamma(0,s)=\gamma_0(s)$.
The first derivative at $r=0$ gives
\begin{equation}\label{eqb5}
    \nabla v (y_0,t_0) = \vp'(d_0,t_0) \gamma_0'(d_0), \text{  and  }
    \nabla v (x_0,t_0) =  \vp'(d_0,t_0) \gamma_0'(-d_0).
\end{equation}
Here and below, all derivatives of $\vp$ are evaluated at $(d_0, t_0)$.

To make the calculation of second variations easier, we introduce "Fermi coordinates" along the geodesic $\gamma_0$: choose an orthonormal basis $\{e_i\}_{i=1}^n$ at $x$ with $\gamma_0(-d_0) =e_n$ and then parallel transport along $\gamma_0$ to obtain an orthonormal basis $\{e_i(s)\}_{i=1}^n$ for $T_{\gamma_0(s)} M$ with $e_n(s) =\gamma_0'(s)$ for each $s \in [-d_0,d_0]$. For the variation $\gamma(r,s)= \gamma_0(s+\frac{rs}{d_0})$, we have
$$\left.\frac{\partial}{\partial r}\right|_{r=0} L[\gamma(r,s))]=2
\text{  and  } \left.\frac{\partial^2}{\partial r^2}\right|_{r=0} L[\gamma(r,s))] =0.$$
Thus the second derivative test yields
\begin{equation}\label{eqb6}
     v_{nn}(y_0,t_0)-v_{nn}(x_0,t_0) - 2\vp'' \leq 0,
\end{equation}
where the subscripts now denote covariant derivatives in directions corresponding to the basis $\{e_i\}$.
For $1\leq i \leq n-1$, the variation $\gamma(r,s) =\exp_{\gamma_0(s)} \left(r e_i(s) \right)$ has
$$\left.\frac{\partial}{\partial r}\right|_{r=0} L[\gamma(r,s))]=0, $$
and
$$
    \left.\frac{\partial^2}{\partial r^2}\right|_{r=0} L[\gamma(r,s))] = -\int_{-d_0}^{d_0} R(e_i,e_n,e_i,e_n) ds.
$$
Then the second derivative inequality produces
\begin{equation*}
    v_{ii}(y_0,t_0)-v_{ii}(x_0,t_0) +\vp'\int_{-d_0}^{d_0} R(e_i,e_n,e_i,e_n) ds \leq 0.
\end{equation*}
We then get, by summing over $1\leq i\leq n-1$,
\begin{equation}\label{eqb7}
    \sum_{i=1}^{n-1} \left(v_{ii}(y_0,t_0)-v_{ii}(x_0,t_0) \right) +\vp'\int_{-d_0}^{d_0} \Ric(e_n,e_n) ds \leq 0.
\end{equation}
Combining inequalities \eqref{eqb6} and \eqref{eqb7}  together and plugging into \eqref{eqb5},
\begin{eqnarray*}
&& \Delta_{p,f} v(y_0,t_0) -\Delta_{p,f}  v(x_0,t_0) \\
&=& |\nabla v(y_0,t_0)|^{p-2} \left( (p-1)v_{nn}(y_0,t_0) +\sum_{i=1}^{n-1} v_{ii}(y_0,t_0) -\langle \nabla v(y_0,t_0), \nabla f(y_0) \rangle  \right) \\
&& - |\nabla v(x_0,t_0)|^{p-2} \left( (p-1)v_{nn}(x_0,t_0) +\sum_{i=1}^{n-1} v_{ii}(x_0,t_0) -\langle \nabla v(x_0,t_0), \nabla f(x_0) \rangle  \right) \\
&\leq&  |\vp'|^{p-2}
\left( 2(p-1) \vp'' -\vp' \int_{-d_0}^{d_0} \Ric(e_n,e_n) ds - \vp' \langle e_n(d_0), \nabla f(y_0) \rangle - \vp' \langle  e_n(-d_0), \nabla f(x_0) \rangle\right)   \\
&=&  |\vp'|^{p-2}
\left( 2(p-1)\vp'' -\vp'\int_{-d_0}^{d_0} \left(\Ric(e_n,e_n) +\nabla^2 f (e_n,e_n) \right)\, ds \right)   \\
&\leq&  |\vp'|^{p-2}
\left( 2(p-1)\vp'' -2\kappa \, d_0 \, \vp' \right).
\end{eqnarray*}
Therefore, we get
\begin{equation}\label{eqb8}
\Delta_{p,f} v(y_0,t_0)\le \Delta_{p,f}  v(x_0,t_0)+ 2\mathcal{L}_{p,\kappa}(\vp).
\end{equation}
Now let $h(t)=|t|^{-\frac{p-2}{p-1}}t$, which is increasing, odd, and convex for $t>0$.
We then deduce from \eqref{eqb8} that
$$
h(\Delta_{p,f} v(y_0,t_0))\le h(\Delta_{p,f}  v(x_0,t_0)+2\mathcal{L}_{p,\kappa}(\vp)).
$$
On the other hand, applying Lemma \ref{lmb1} with  $\delta=-\mathcal{L}_{p,\kappa}(\vp) \geq 0$ and
$t=\Delta_{p,f} v(x_0,t_0)$ yields
\begin{eqnarray*}
h(\Delta_{p,f} v(y_0,t_0))-h(\Delta_{p,f} v(x_0,t_0))\le h(t-2\delta)-h(t)\le -2h(\delta),
\end{eqnarray*}
which gives
\begin{equation}\label{eqb9}
\left.|\Delta_{p,f} v|^{-\frac{p-2}{p-1}}\Delta_{p,f} v\right|_{(y_0,t_0)}-\left.|\Delta_{p,f} v|^{-\frac{p-2}{p-1}}\Delta_{p,f} v\right|_{(x_0, t_0)}\le 2 |\mathcal{L}_{p,\kappa}(\vp)|^{-\frac{p-2}{p-1}}\mathcal{L}_{p,\kappa}(\vp).
\end{equation}
Finally, the two inequalities \eqref{eqb3} and \eqref{eqb9} yield the inequality
\begin{eqnarray*}
\vp_t\le |\mathcal{L}_{p,\kappa}(\vp)|^{-\frac{p-2}{p-1}}\mathcal{L}_{p,\kappa}(\vp)-\frac{\e}{2}e^{t_0}.
\end{eqnarray*}
This contradicts the inequality in assumption (2), and the proof is complete.
\end{proof}
\begin{remark}
From the above proof, we see that Theorem \ref{thmb1} holds when the function $v(x,t)$ is only $C^{1,1}$ in $(x,t)$ and $C^2$ in $x$ at points where $\nabla v \neq 0$.
\end{remark}

On the interval $[0, \frac{D}{2}]$, we define the following corresponding one-dimensional eigenvalue problem with boundary conditions
$\phi(0)=0$ and $\phi'(\frac{D}{2})=0$:
\begin{equation}
\lambda_{p,\kappa,D/2}=\inf \left\{\frac{\int_0^{\frac D 2} |\phi'|^p e^{-\frac \kappa 2 s^2} ds}{\int_0^{\frac D 2} |\phi|^p e^{-\frac \kappa 2 s^2} ds}, \text{\quad with\quad} \phi(0)=0 \right\}.
\end{equation}
\begin{lemma}\label{lmb2}
\begin{equation}
\mu_{p,\kappa,D}=\lambda_{p,\kappa,D/2}.
\end{equation}
\end{lemma}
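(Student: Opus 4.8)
Write $w(s)=e^{-\frac{\kappa}{2}s^{2}}$, which is positive and even. The identity $\mathcal{L}_{p,\kappa}(\vp)=\frac{1}{w}\lf(w\,|\vp'|^{p-2}\vp'\ri)'$ (a direct computation, using $w'/w=-\kappa s$) shows that the Neumann problem for \eqref{ODE} on $[-D/2,D/2]$ is the Euler--Lagrange problem of the weighted Rayleigh quotient $R(\vp):=\frac{\int_{-D/2}^{D/2}|\vp'|^{p}w}{\int_{-D/2}^{D/2}|\vp|^{p}w}$, and that $\mu_{p,\kappa,D}$ equals the infimum of $R(\vp)$ over nonzero $\vp\in W^{1,p}([-D/2,D/2])$ with $\int_{-D/2}^{D/2}|\vp|^{p-2}\vp\,w=0$; similarly $\lambda_{p,\kappa,D/2}$ is the infimum of the analogous quotient over nonzero $\phi\in W^{1,p}([0,D/2])$ with $\phi(0)=0$, and that infimum is attained. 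The plan is to derive the lemma entirely from these two variational formulas, in two steps.

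\emph{Step 1 (an auxiliary formula, which also yields $\mu_{p,\kappa,D}\le\lambda_{p,\kappa,D/2}$).} I would first prove $\lambda_{p,\kappa,D/2}=\inf\{R(\psi):\psi\in W^{1,p}([-D/2,D/2])\setminus\{0\},\ \psi(0)=0\}$. For ``$\le$'', extend a near-minimizer $\phi$ of the half-interval problem oddly across the origin; since $w$ is even, the extension $\tilde\phi$ lies in $W^{1,p}$, vanishes at $0$, has the same Rayleigh quotient as $\phi$, and moreover satisfies $\int_{-D/2}^{D/2}|\tilde\phi|^{p-2}\tilde\phi\,w=0$ because that integrand is odd. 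The last property makes $\tilde\phi$ admissible in the variational problem defining $\mu_{p,\kappa,D}$, so already $\mu_{p,\kappa,D}\le R(\tilde\phi)\to\lambda_{p,\kappa,D/2}$. For ``$\ge$'' in the auxiliary formula, given any $\psi$ with $\psi(0)=0$ I would write $R(\psi)=\frac{A^{-}+A^{+}}{B^{-}+B^{+}}$ as the mediant of the quotients $A^{\pm}/B^{\pm}$ of the restrictions of $\psi$ to $[-D/2,0]$ and to $[0,D/2]$: the restriction to $[0,D/2]$ is already a competitor for $\lambda_{p,\kappa,D/2}$, and the restriction to $[-D/2,0]$ becomes one after the reflection $s\mapsto -s$, which preserves $w$ and moves the vanishing point to the origin; hence $A^{\pm}/B^{\pm}\ge\lambda_{p,\kappa,D/2}$, and therefore so is their mediant $R(\psi)$ (the degenerate cases $B^{+}=0$ or $B^{-}=0$ being immediate). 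This establishes the formula, and with it the inequality $\mu_{p,\kappa,D}\le\lambda_{p,\kappa,D/2}$.

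\emph{Step 2 (the reverse inequality $\mu_{p,\kappa,D}\ge\lambda_{p,\kappa,D/2}$).} Let $u$ be a first nonzero Neumann eigenfunction, so that $\lf(w\,|u'|^{p-2}u'\ri)'=-\mu_{p,\kappa,D}\,w\,|u|^{p-2}u$ weakly on $[-D/2,D/2]$ and $\int_{-D/2}^{D/2}|u|^{p-2}u\,w=0$; the latter forces $u$ to change sign, so $u_{\pm}:=\max(\pm u,0)$ are both nonzero and lie in $W^{1,p}$. Testing the equation against $u_{\pm}$ --- admissible, since test functions for the Neumann problem carry no boundary condition --- and using $(u_{\pm})'=\pm\,u'\,\mathbf{1}_{\{\pm u>0\}}$ a.e., one obtains $\int_{-D/2}^{D/2}|(u_{\pm})'|^{p}w=\mu_{p,\kappa,D}\int_{-D/2}^{D/2}|u_{\pm}|^{p}w$, that is, $R(u_{+})=R(u_{-})=\mu_{p,\kappa,D}$. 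Since $u(0)\le 0$ or $u(0)\ge 0$, one of $u_{+},u_{-}$ vanishes at the origin; calling it $\psi$, the auxiliary formula of Step 1 gives $\mu_{p,\kappa,D}=R(\psi)\ge\lambda_{p,\kappa,D/2}$. Together with Step 1 this proves $\mu_{p,\kappa,D}=\lambda_{p,\kappa,D/2}$.

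The routine inputs --- existence and $C^{1}$-regularity of a first eigenfunction $u$, the fact that it solves the clean equation above with no spurious Lagrange multiplier, and the variational characterization of the first nonzero Neumann eigenvalue of the one-dimensional weighted $p$-Laplacian --- are standard, and I would simply cite them. The one genuinely delicate point is the last step: an arbitrary sign-changing \emph{competitor} for $\mu_{p,\kappa,D}$ need not vanish at the origin, and it is exactly the identity $R(u_{+})=R(u_{-})=\mu_{p,\kappa,D}$ --- which holds because $u$ is an \emph{eigenfunction} (equivalently, because $u_{+}$ and $u_{-}$ are first eigenfunctions of the corresponding mixed Dirichlet--Neumann problems, with the same eigenvalue) --- that lets one discard the ``wrong'' half-interval and keep a half-interval competitor that does vanish at $0$. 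I expect verifying this identity to be the main obstacle in a fully careful write-up.
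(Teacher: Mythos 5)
Your proof is correct, and it shares the paper's two-inequality variational outline, with the inequality $\mu_{p,\kappa,D}\le\lambda_{p,\kappa,D/2}$ obtained in both cases by odd reflection of the half-interval (near-)minimizer, the evenness of the weight $e^{-\kappa s^2/2}$ giving simultaneously membership in the constraint class for $\mu_{p,\kappa,D}$ and equality of Rayleigh quotients. For the reverse inequality the mechanics differ. The paper takes an eigenfunction $\psi$ for $\mu_{p,\kappa,D}$, assumes (implicitly using the reflection symmetry $s\mapsto -s$ of the weighted equation) that it has a zero $s_0\in[0,D/2]$, extends it by zero on $[0,s_0)$ to get a direct competitor for $\lambda_{p,\kappa,D/2}$, and uses that the quotient over $[s_0,D/2]$ equals $\mu_{p,\kappa,D}$ --- precisely the ``delicate'' identity you flag, obtained there by multiplying the ODE by $\psi$ and integrating by parts on $[s_0,D/2]$, the boundary terms vanishing because $\psi(s_0)=0$ and $\psi'(D/2)=0$. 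You instead decompose the eigenfunction into its positive and negative parts, prove $R(u_+)=R(u_-)=\mu_{p,\kappa,D}$ by weak testing, and then invoke an auxiliary characterization of $\lambda_{p,\kappa,D/2}$ as an infimum over the full interval subject only to vanishing at the origin, proved via a mediant/reflection argument. Your route is a little longer (the paper does not need your Step 1, since its zero-extended truncation is already a half-interval competitor), but it avoids the ``WLOG $s_0\in[0,D/2]$'' normalization and makes explicit the eigenfunction identity that the paper asserts without comment; at bottom both arguments rest on the same fact, namely that the Rayleigh quotient of the eigenfunction restricted to a region cut off by a zero (with the natural Neumann condition at the other end) reproduces the eigenvalue.
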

\begin{proof}
Let $\phi(s)$ be an eigenfunction on $[0, D/2]$ corresponding to $\lambda_{p,\kappa,D/2}$. Then for
$s\in[-D/2, 0)$ we define $\phi(s)$ by $\phi(s)=-\phi(-s)$. Clearly, $\phi(s)$ defined on $[-D/2, D/2]$ is a trial function
for $\mu_{p,\kappa,D}$. Therefore
$$
\mu_{p,\kappa,D}\le \lambda_{p,\kappa,D/2}.
$$

On the other hand, Let $\psi(s)$ be an eigenfunction corresponding to $\mu_{p,\kappa,D}$. Without of loss of generality we assume further that $\psi(s_0)=0$ for some $s_0\in [0, D/2]$. If $s_0>0$, we define $\psi(s)=0$ for $s\in[0, s_0)$. Then
$\psi(s)$ is a trial function for $\lambda_{p,\kappa,D/2}$, and we have
$$
\lambda_{p,\kappa,D/2}\le\frac{\int_0^{\frac D 2} |\psi'|^p e^{-\frac \kappa 2 s^2} ds}{\int_0^{\frac D 2} |\psi|^p e^{-\frac \kappa 2 s^2} ds}=\frac{\int_{s_0}^{\frac D 2} |\psi'|^p e^{-\frac \kappa 2 s^2} ds}{\int_{s_0}^{\frac D 2} |\psi|^p e^{-\frac \kappa 2 s^2} ds}=\mu_{p,\kappa,D},
$$
proving the lemma.
\end{proof}

\begin{lemma}
There exists an odd eigenfunction  $\phi$ corresponding to $\mu_{p,\kappa,D}$ satisfying
$$
 (p-1)|\dot{\phi}|^{p-2}\ddot{\phi} - \kappa\, s \, |\dot{\phi}|^{p-2}\dot{\phi} +\mu_{p,\kappa,D} |\phi|^{p-2}\phi =0
$$
in $(0, D/2)$ with  $\phi'(s)>0$ in $(0, D/2)$ and $\phi'(D/2)=0$.

\end{lemma}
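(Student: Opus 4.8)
The plan is to produce $\phi$ as the odd extension of a first eigenfunction of the one‑dimensional \emph{mixed} problem on $[0,D/2]$ (Dirichlet at $0$, Neumann at $D/2$), and then to extract the monotonicity $\phi'>0$ from a sign analysis of the auxiliary quantity $g:=|\phi'|^{p-2}\phi'$.

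First I would invoke Lemma~\ref{lmb2}: since $\mu_{p,\kappa,D}=\lambda_{p,\kappa,D/2}$, it suffices to take a minimizer $\phi$ of the Rayleigh quotient defining $\lambda_{p,\kappa,D/2}$ over $\{\phi\in W^{1,p}([0,D/2],e^{-\kappa s^2/2}\,ds):\phi(0)=0,\ \phi\not\equiv0\}$; such a minimizer exists by the direct method, the weight being bounded above and below on the compact interval and $W^{1,p}\hookrightarrow\hookrightarrow L^{p}$. Writing $\mu:=\mu_{p,\kappa,D}=\lambda_{p,\kappa,D/2}>0$, this $\phi$ is a weak solution on $(0,D/2)$ of the Euler--Lagrange equation, which is exactly the ODE appearing in the statement, together with the natural boundary condition $\phi'(D/2)=0$; by the regularity theory for the $p$-Laplacian $\phi\in C^{1,\alpha}([0,D/2])$ and $\phi$ solves the equation classically wherever $\phi'\neq0$. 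Replacing $\phi$ by $|\phi|$ (which leaves the quotient unchanged) I may assume $\phi\ge0$; then the strong maximum principle gives $\phi>0$ on $(0,D/2)$, and Hopf's boundary point lemma, combined with $\phi'(D/2)=0$, forces $\phi(D/2)>0$.

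The heart of the matter is showing $\phi'>0$ on $(0,D/2)$. I would set $g:=|\phi'|^{p-2}\phi'$, a continuous function on $[0,D/2]$ with the same sign as $\phi'$; the weak form of the equation shows that the distributional derivative of $g$ equals the continuous function $\kappa s\,g-\mu\phi^{p-1}$, so $g\in C^1([0,D/2])$ and $g'=\kappa s\,g-\mu\phi^{p-1}$. Hence at every $s_*\in(0,D/2]$ with $g(s_*)=0$ one has $g'(s_*)=-\mu\,\phi(s_*)^{p-1}<0$ (using $\phi(s_*)>0$), i.e.\ $g$ can cross the value $0$ only downward on $(0,D/2]$. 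Combined with $g(0)\ge0$ (since $\phi(0)=0\le\phi$) and $g(D/2)=0$ (since $\phi'(D/2)=0$), this yields $g>0$ on $(0,D/2)$: if $g$ vanished somewhere in $(0,D/2)$, then it would stay negative on the rest of $(0,D/2)$ — it cannot return to $0$, as any such zero would be a downcrossing — and then $g(D/2)=0$ would force $g'(D/2)\ge0$, contradicting $g'(D/2)=-\mu\,\phi(D/2)^{p-1}<0$; so $g$ has constant sign on $(0,D/2)$, necessarily positive (a negative sign would make $\phi$ strictly decreasing from $\phi(0)=0$, contradicting $\phi\ge0$). Thus $\phi'>0$ on $(0,D/2)$.

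Finally, I would extend $\phi$ to $[-D/2,D/2]$ by $\phi(-s):=-\phi(s)$; by (the proof of) Lemma~\ref{lmb2} this odd extension is an eigenfunction for $\mu_{p,\kappa,D}$, and since the ODE is invariant under $(s,\phi)\mapsto(-s,-\phi)$ it satisfies the ODE on $(-D/2,D/2)$, in particular on $(0,D/2)$, with $\phi'(D/2)=0$ — which is the assertion. The main obstacle is the monotonicity step: for $p\neq2$ the degeneracy of the $p$-Laplacian rules out a naive ODE-uniqueness argument, so one must route the proof through $g$, and care is needed to justify that $g\in C^1$ across the zeros of $\phi'$ and that $\phi(D/2)>0$ (the latter via Hopf's lemma for quasilinear equations). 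Alternatively this step can be replaced by invoking the Sturm-type oscillation theory for the one-dimensional $p$-Laplacian, as developed in \cite{Valtorta12,NV14}: the first nonzero Neumann eigenfunction on $[-D/2,D/2]$ is simple and strictly monotone, and by the above symmetry together with simplicity it may be taken odd.
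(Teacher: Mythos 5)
Your proposal is correct and follows essentially the same route as the paper: reduce to the mixed Dirichlet--Neumann problem on $[0,D/2]$ via Lemma \ref{lmb2}, take a nonnegative first eigenfunction, and deduce $\phi'>0$ from a sign analysis of $|\phi'|^{p-2}\phi'$ together with $\phi'(D/2)=0$ (the paper phrases this more compactly by noting that $e^{-\kappa s^2/2}|\dot\phi|^{p-2}\dot\phi$ is strictly decreasing and vanishes at $D/2$, which is your crossing argument with the integrating factor built in). Your write-up simply supplies more of the standard details (direct method, $C^{1,\alpha}$ regularity, strong maximum principle/Hopf) that the paper leaves implicit.
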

\begin{proof}
Lemma \ref{lmb2} gives the existence of odd eigenfunction $\phi$ corresponding to $\mu_{p,\kappa,D}$, and $\phi\neq0$ in
$(0,D/2]$. Then we can assume $\phi(s)>0$ in $(0,D/2]$. Since
$$
\Big(e^{-\frac{\kappa}{2}s^2}|\dot{\phi}|^{p-2}\dot{\phi}\Big)'=-\mu_{p,\kappa,D} |\phi|^{p-2}\phi<0
$$
in $(0,D/2]$ and $\phi'(D/2)=0$, then $\phi'(s)>0$ in $(0, D/2)$.

\end{proof}

Now we turn to prove Theorem \ref{Thm Main} for the case $1<p\le2$.
\begin{proof}[Proof of Theorem \ref{Thm Main}]
For any $D_1>D$, let $\phi(s)$ be an eigenfunction corresponding to $\mu_{p,\kappa,D_1}$ with initial conditions $\phi(0)=0$ and $\phi'(s)>0$ on $(0,D/2]$. Let $u(x)$ be an eigenfunction  corresponding to $\lambda_{p,f}$.
Consider
$$
v(x,t)=e^{- t\lambda_{p,f}^{\frac{1}{p-1}}}u(x),
$$
and
$$
\vp(s,t)= c\, e^{-t\mu_{p,\kappa,D_1}^{\frac{1}{p-1}}}\phi(s),
$$
where $c$ is a positive constant so chosen that
$$
u(y)-u(x)-2c \, \phi\left(\frac{d(x,y)}{2}\right)\le 0.
$$
Then direct calculations show
$$
\frac{\p v}{\p t}=|\Delta_{p,f} v|^{-\frac{p-2}{p-1}}\Delta_{p,f} v,
$$
and
$$ \frac{\p \vp}{\p t}=|\mathcal{L}_{p,\kappa}(\varphi)|^{-\frac{p-2}{p-1}}\mathcal{L}_{p,\kappa}(\vp).$$
Moreover, it's easy to verify that $\vp$ satisfies all the conditions in Theorem 2.1. Then
\begin{equation}\label{eqb12}
 e^{- t\lambda_{p,f}^{\frac{1}{p-1}}} (u(y)-u(x))\le2 c\,  e^{-\mu_{p,\kappa,D_1}^{\frac{1}{p-1}}t}\phi\left(\frac{d(x,y)}{2}\right)
\end{equation}
for any $t>0$. Thus as $t\rightarrow \infty$, inequality \eqref{eqb12} implies
$$
\lambda_{p,f}\ge \mu_{p,\kappa,D_1}.
$$
Then Theorem \ref{Thm Main} follows by letting $D_1\rightarrow D$.
\end{proof}
\begin{remark}\label{rmk regularity}
The eigenfunction $u$ is in general not smooth. We have $u \in C^{1,\a}(M) \cap W^{1,p}(M)$, and elliptic theory ensures that $u$ is smooth where $\nabla u\neq 0$ and $u\neq 0$.
If $\nabla u \neq 0$ and $u(x)=0$, then $u \in C^{3,\a}(U)$ if $p>2$ and $u \in C^{2,\a}(U)$ if $1<p<2$, where $U$ is a small neighborhood of $x$.
We refer the reader to \cite{Tolksdorf84} for these results.
\end{remark}

%\newpage
\section{Sharp Gradient Estimates of Eigenfunctions}

%For the linear case, i.e., assuming $p=2$, the modulus of continuity method gives simple proofs of sharp lower bound estimates for the first nonzero eigenvalue of the Laplacian or $f$-Laplacian, see\cite{AC13}\cite{AN12}.

%However,
There seems to be no way to make the proof via the modulus of continuity estimates in the previous section work for $p>2$.
Therefore, we use the classical gradient estimates method to deal with $p>2$.
The approach %via gradient estimates,
%depsite its technicality,
were successfully used to obtain optimal lower bounds for the first nonzero eigenvalue (see \cite{Kroger98} for the Laplacian, \cite{BQ00} for the $f$-Laplacian, and \cite{Valtorta12}\cite{NV14} for the $p$-Laplacian).

The goal of this section is to prove sharp gradient estimates for eigenfunctions of $\Delta_{p,f}$. We use the two-point maximum principle to establish height-dependent gradient estimates, which is much easier than the classical approach used \cite{Kroger98}\cite{BQ00}\cite{Valtorta12}\cite{NV14}. Recently, the two-point maximum principle method was used by Andrews and Xiong\cite{AX19} to derive gradient estimates for a wild class of non-singular isotropic quasi-linear elliptic equations. Moreover, it works in the low-regularity situations for viscosity solutions \cite{Li16}\cite{LW17}.

We state the sharp gradient comparison theorem for the eigenfunctions, which holds for all $1<p<\infty$ and $\kappa \leq 0$.
\begin{thm}\label{Thm gradient comparison}
Suppose that $(M,g)$ is a compact Riemannian manifold with $\Ric +\nabla^2 f \geq \kappa\, g$ for some $\kappa \leq 0$.
Suppose that $u$ is an eigenfunction of $\Delta_{p,f}$ with eigenvalue $\l$:
\begin{equation}
    \Delta_{p,f} u=-\l |u|^{p-2} u.
\end{equation}
Suppose $\phi:[a,b] \to \R$ is a solution of the one-dimensional equation:
\begin{equation}
    (p-1)|\phi'|^{p-2}\phi'' -\kappa\, t  |\phi'|^{p-2}\phi' =-\lambda |\phi|^{p-2} \phi
\end{equation}
which is increasing, and such that the range of $u$ is contained in $[\phi(a),\phi(b)]$. Let $\Psi$ be the inverse of $\phi$.
Then for every $x$ and $y$ in $M$, it holds
\begin{equation}
    \Psi(u(y)) -\Psi(u(x)) -d(x,y) \leq 0.
\end{equation}
\end{thm}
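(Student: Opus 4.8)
The plan is to prove Theorem~\ref{Thm gradient comparison} by the two-point maximum principle, closely paralleling the parabolic argument of Theorem~\ref{thmb1} but now in the stationary (elliptic) setting with the auxiliary function $\Psi = \phi^{-1}$ replacing the modulus $\vp$. Define, for $\e > 0$,
\begin{equation*}
    C_\e(x,y) := \Psi(u(y)) - \Psi(u(x)) - (1+\e)\, d(x,y) - \e,
\end{equation*}
or a similar perturbation, and suppose for contradiction that $\sup_{M\times M} C_\e > 0$ for some small $\e$; let the maximum be attained at $(x_0,y_0)$. As in Theorem~\ref{thmb1}, one first argues $x_0 \neq y_0$ (otherwise $C_\e < 0$ there) and that $(x_0,y_0) \notin \p(M\times M)$ using strict convexity of $\p M$, the Neumann condition, and monotonicity of $\phi$ (so $\Psi' > 0$). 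I would then set $d_0 = d(x_0,y_0)$ and let $\gamma_0$ be a unit-speed minimizing geodesic from $x_0$ to $y_0$; note $\Psi(u(x_0)) = \phi^{-1}$-value places $u(x_0),u(y_0)$ at parameters $a_0 := \Psi(u(x_0))$, $b_0 := \Psi(u(y_0))$ with $b_0 - a_0 = (1+\e)d_0 + \e$ at the max.

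Next come the first- and second-variation computations, transplanted verbatim from the proof of Theorem~\ref{thmb1}. The first variation in $x$ and $y$ gives
\begin{equation*}
    \nabla u(y_0) = \frac{1}{\Psi'(u(y_0))}\,\g_0'(d_0), \qquad \nabla u(x_0) = \frac{1}{\Psi'(u(x_0))}\,\g_0'(-d_0),
\end{equation*}
so $|\nabla u|$ at the two endpoints is controlled by $\phi'$ evaluated at $a_0$, $b_0$ respectively. Introducing Fermi coordinates along $\g_0$ and using the two variations $\g(r,s) = \g_0(s + rs/d_0)$ (linear reparametrization) and $\g(r,s) = \exp_{\g_0(s)}(r e_i(s))$ for $i < n$, the second-derivative test yields the analogue of \eqref{eqb6}--\eqref{eqb7}: namely a bound on $u_{nn}(y_0) - u_{nn}(x_0)$ in terms of $\Psi''$ and on $\sum_{i<n}(u_{ii}(y_0)-u_{ii}(x_0))$ in terms of $\Psi' \int \Ric(e_n,e_n)$. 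Converting $u_{ij}$-expressions into $\Delta_{p,f}u = |\nabla u|^{p-2}((p-1)u_{nn} + \sum_{i<n}u_{ii} - \langle \nabla u,\nabla f\rangle)$ and absorbing the $\nabla f$ terms with the Hessian via the hypothesis $\Ric + \nabla^2 f \geq \kappa g$ — exactly as in the chain of inequalities culminating in \eqref{eqb8} — should give a differential inequality comparing $\Delta_{p,f}u(y_0) - \Delta_{p,f}u(x_0)$ against an expression built from $\Psi$, $\Psi'$, $\Psi''$ and the one-dimensional operator satisfied by $\phi$.

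The heart of the matter, and the step I expect to be the main obstacle, is the algebraic translation between the ODE satisfied by $\phi$ and the inequality one obtains for $\Psi$. Differentiating $\phi(\Psi(\tau)) = \tau$ gives $\phi'(\Psi)\Psi' = 1$ and $\phi''(\Psi)(\Psi')^2 + \phi'(\Psi)\Psi'' = 0$, hence $\Psi''/(\Psi')^3 = -\phi''(\Psi)\cdot(\phi'(\Psi))^{\ldots}$; feeding this into the ODE $(p-1)|\phi'|^{p-2}\phi'' - \kappa t|\phi'|^{p-2}\phi' = -\lambda|\phi|^{p-2}\phi$ (at $t = \Psi(\tau)$) should produce the precise relation one needs. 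Since at the maximum point $u(x_0) = \phi(a_0)$ and $u(y_0) = \phi(b_0)$ with $b_0 - a_0 \geq d_0$, I would want to compare the "$y_0$-side" quantity to what $\phi$ does at $b_0$ and the "$x_0$-side" to $\phi$ at $a_0$; the sign condition $\kappa \leq 0$ together with monotonicity of $\phi'$ on the relevant range is what makes the inequality go the right way — this is the analogue of Lemma~\ref{lmb1}. The cleanest route may actually be to run the maximum principle on $C_\e(x,y) = \Psi(u(y)) - \Psi(u(x)) - d(x,y)$ \emph{without} the $\e$-term but instead perturbing the target ODE or $\lambda$ slightly (replacing $\lambda$ by $\lambda - \e$ / working on $[a-\delta,b+\delta]$), as is done in Theorem~\ref{Thm Main}'s proof where $D_1 > D$; I would check which perturbation keeps $\phi$ increasing with $\phi' > 0$ on the interior and produces a strict inequality contradicting the ODE. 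Assuming this bookkeeping works out, letting $\e \to 0$ (or the perturbation parameter $\to 0$) gives $\Psi(u(y)) - \Psi(u(x)) \leq d(x,y)$ for all $x,y$, which is the claim.
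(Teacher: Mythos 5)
Your setup (two-point maximum principle on $Z(x,y)=\Psi(u(y))-\Psi(u(x))-d(x,y)$, first/second variations in Fermi coordinates, boundary exclusion via strict convexity and Neumann, regularity at the max since $\nabla u\neq 0$ there) is exactly the paper's route, but the step you defer --- ``the algebraic translation between the ODE for $\phi$ and the inequality for $\Psi$,'' which you leave as ``the main obstacle'' and ``assuming this bookkeeping works out'' --- is precisely the proof, and your guesses about how it closes are off. In the paper's argument there is no analogue of Lemma~\ref{lmb1}, no monotonicity of $\phi'$, and no perturbation of $d$, of $\lambda$, or of the interval. One lets $m=\max Z>0$ at $(x,y)$, sets $z_x=\Psi(u(x))$, $z_y=\Psi(u(y))$, and combines $(p-1)$ times the two longitudinal second-variation inequalities \eqref{eq3.7}--\eqref{eq3.8} with the transverse sum \eqref{eq3.9} and the Bakry--Emery bound \eqref{eq3.10}. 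After dividing the eigenvalue equation by $|\nabla u|^{p-2}=\phi'(z_\cdot)^{p-2}$ (using the first-variation identities) and substituting the ODE for $\phi$ at $t=z_y$ and $t=z_x$, the eigenvalue terms cancel \emph{identically}, because $u(y)=\phi(z_y)$ and $u(x)=\phi(z_x)$, so $-\lambda|u(y)|^{p-2}u(y)/\phi'(z_y)^{p-1}+\lambda|\phi(z_y)|^{p-2}\phi(z_y)/\phi'(z_y)^{p-1}=0$ and likewise at $x$. What survives is exactly $-\kappa\,(z_y-z_x-d(x,y))=-\kappa m$, so the chain of inequalities reads $0\geq -\kappa m>0$ when $\kappa<0$ and $m>0$: contradiction. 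The case $\kappa=0$ is then obtained by letting $\kappa\to 0$, and $\phi'>0$ is arranged by approximation; these are the only limiting arguments used.

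Concretely, two points in your plan would fail as written. First, your assertion that ``$\kappa\leq 0$ together with monotonicity of $\phi'$'' closes the argument, as an analogue of Lemma~\ref{lmb1}, is not how the sign works: at $\kappa=0$ the unperturbed computation yields only $0\geq 0$, and no convexity/monotonicity input rescues it --- the strictness comes solely from $-\kappa m>0$ for $\kappa<0$, with $\kappa=0$ handled by the limit. Second, your proposed perturbation $C_\e=\Psi(u(y))-\Psi(u(x))-(1+\e)d(x,y)-\e$ would destroy the exact cancellation: the first variation then gives $\nabla u(y_0)=(1+\e)\phi'(z_y)\gamma_0'(d_0)$, so the factor $|\nabla u|^{p-2}$ no longer matches the $\phi'(z_\cdot)^{p-1}$ denominators coming from the ODE, and the eigenvalue terms do not cancel; similarly, replacing $\lambda$ by $\lambda-\e$ breaks the matching between the eigenvalue equation for $u$ and the equation for $\phi$. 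So the missing ingredient is not a clever perturbation but the observation that, at the maximum point, the substitution $u=\phi(z)$ makes everything cancel except $-\kappa m$.
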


As an immediate corollary, we get the following sharp gradient comparison theorem by allowing $y$ to approach $x$,.
\begin{corollary}[Gradient Comparison]\label{Cor gradient comparion}
Under the assumptions of Theorem \ref{Thm gradient comparison}, we have
\begin{equation}
    |\nabla u(x)| \leq \phi'\left(\Psi(u(x))\right)
\end{equation}
for all $x\in M$.
\end{corollary}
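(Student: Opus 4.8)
The plan is to derive the pointwise bound from the two-point inequality of Theorem \ref{Thm gradient comparison} by a first-order expansion, letting $y$ approach $x$ in the direction of steepest ascent of $u$. The content of Theorem \ref{Thm gradient comparison} is exactly that $\Psi\circ u$ is a $1$-Lipschitz function on $(M,d)$, and the asserted gradient bound is just the infinitesimal version of this statement.

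Concretely, I would fix $x\in M$ and first dispose of the trivial case $\nabla u(x)=0$, in which the inequality holds because $\phi$ is increasing and hence $\phi'\ge 0$. When $\nabla u(x)\neq 0$, I would take $\gamma$ to be the unit-speed geodesic issuing from $x$ with initial velocity $\nabla u(x)/|\nabla u(x)|$ and set $y=\gamma(t)$ for small $t>0$; since $d(x,\gamma(t))\le t$, Theorem \ref{Thm gradient comparison} gives $\Psi(u(\gamma(t)))-\Psi(u(x))\le t$. Using the $C^{1,\a}$-regularity of $u$ recorded in Remark \ref{rmk regularity}, one has $u(\gamma(t))=u(x)+|\nabla u(x)|\,t+o(t)$ as $t\to 0^+$, so that $u(\gamma(t))$ stays in the range of $\phi$ and exceeds $u(x)$; dividing the inequality by $t$, passing to the limit, and using $\Psi'=1/(\phi'\circ\Psi)$ then produces $|\nabla u(x)|\le \phi'(\Psi(u(x)))$.

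I do not expect a genuine obstacle; the only delicate point is the chain-rule step when $\phi'$ vanishes at the height $\Psi(u(x))$, where $\Psi$ fails to be differentiable. But in that case the difference quotient $[\Psi(u(x)+h)-\Psi(u(x))]/h$ tends to $+\infty$ as $h\to 0^+$ (because $\phi$ is $C^1$ with vanishing derivative there), which would contradict $\Psi(u(\gamma(t)))-\Psi(u(x))\le t$ since $u(\gamma(t))-u(x)\sim |\nabla u(x)|\,t>0$; hence $\phi'(\Psi(u(x)))>0$ automatically whenever $\nabla u(x)\neq 0$, and the argument goes through unimpeded.
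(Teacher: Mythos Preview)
Your proposal is correct and is precisely the approach the paper indicates: the paper states the corollary ``by allowing $y$ to approach $x$'' with no further detail, and you have simply fleshed out this limiting argument by sending $y=\gamma(t)$ to $x$ along the gradient direction. Your handling of the edge case $\phi'(\Psi(u(x)))=0$ is a nice touch that the paper leaves implicit (it sidesteps the issue in the proof of Theorem \ref{Thm gradient comparison} by approximating with strictly increasing $\phi$).
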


\begin{proof}[Proof of Theorem \ref{Thm gradient comparison}]
First of all, it suffices to prove the theorem for $\kappa <0$, as the case $\kappa=0$ follows immediately by letting $\kappa \to 0$. Secondly, it suffices to consider the case $\phi' >0$ by approximation. Thirdly, we assume $\p M =\emptyset$ for a moment and address the difference of the proof when $\p M \neq \emptyset$ at the end.
Consider the continuous function $Z:M\times M \to \R$ defined by
\begin{equation}
    Z(x,y):=\Psi\left(u(y)\right) -\Psi\left(u(x)\right)-d(x,y)
\end{equation}
and let $m$ be its maximum on $M \times M$.
We shall derive a contradiction if $m>0$.
Since $Z$ vanishes on the diagonal of $M\times M$, its positive maximum must be attained at some $(x,y) \in M\times M$ with $x\neq y$.
This implies that
\begin{equation}
   \Psi\left(u(\gamma(1))\right) -\Psi\left(u(\gamma(0))\right)-L[\gamma] \leq m
\end{equation}
for all smooth curves $\gamma$ in $M$, with equality when $\gamma=\gamma_0:[0,d] \to M$, a unit speed minimizing geodesic from $x$ to $y$ with $d=d(x,y)$. We then pick Fermit coordinate $\{e_i(s)\}_{i=1}^n$ along $\gamma_0$ so that $e_n(s)=\gamma_0'(s)$.  For simplicity of notations, we write $z_x=\Psi\left(u(x)\right)$ and $z_y=\Psi\left(u(y)\right)$, or
equivalently, $\phi(z_x)=u(x)$ and $\phi(z_y)=u(y)$.
Varying the endpoints gives the first order identities:
%\begin{align*}
%& \Psi'(u(y)) \nabla u(y) -\gamma_0'(1) =0 \\
%& \Psi'(u(x)) \nabla u(x) -\gamma_0'(0) =0.
%\end{align*}
\begin{equation}
    \nabla u(y) =  \phi'(z_y) \gamma_0'(d), \text{   }
     \nabla u(x) = \phi'(z_x) \gamma_0'(0). \text{   }
\end{equation}
In particular, we have $\nabla u(x) \neq 0$ and $\nabla u(y)\neq 0$. By Remark \ref{rmk regularity}, $u$ is at least $C^{2,\a}$ near $x$ and $y$.
The second variation in $x$ and $y$ along $\gamma_0$ produce two inequalities:
\begin{align}\label{eq3.7}
    & \frac{\partial^2 Z}{\partial x_n^2}=\frac{1}{\phi'(z_x)} \left(\phi''(z_x) -u_{nn}(x) \right) \leq 0, \\ \label{eq3.8}
    & \frac{\partial^2 Z}{\partial y_n^2}=\frac{1}{\phi'(z_y)} \left(u_{nn}(y)-\phi''(z_y) \right) \leq 0.
\end{align}
In the transverse directions, we choose as before the variation $\gamma_v(s) =\exp_{\gamma_0} (v e_i(s)) $ with $1 \leq i \leq n-1$, yielding the following inequality:
\begin{equation}
    \frac{\partial^2 Z}{\partial x_i^2} + \frac{\partial^2 Z}{\partial y_i^2} =\frac{u_{ii}(y)}{\phi'(z_y)} -\frac{u_{ii}(x)}{\phi'(z_x)} +\int_{\gamma_0} R(e_i,e_n,e_i,e_n) \leq 0.
 \end{equation}
Adding these over $1\leq i\leq n-1$ gives,
\begin{equation}\label{eq3.9}
    \sum_{i=1}^{n-1}\left(\frac{\partial^2 Z}{\partial x_i^2} + \frac{\partial^2 Z}{\partial y_i^2} \right) =\sum_{i=1}^{n-1} \left(\frac{u_{ii}(y)}{\phi'(z_y)} -\frac{u_{ii}(x)}{\phi'(z_x)} \right)
   +\int_{\gamma_0} \Ric(e_n,e_n) \leq 0.
\end{equation}
The curvature assumption $\Ric +\nabla^2 f \geq \kappa\, g$ implies
\begin{align}\label{eq3.10} \nonumber
    \int_{\gamma_0} \Ric(e_n,e_n)
    & \geq \kappa \cdot d(x,y) - \int_{\gamma_0} \nabla^2 f(e_n,e_n) \\ \nonumber
    & \geq \kappa \cdot d(x,y) -\int_0^d \frac{d}{ds}\langle \nabla f, e_n \rangle ds \\ \nonumber
    & = \kappa \cdot d(x,y) -\langle \nabla f(y),\gamma_0'(1) \rangle + \langle \nabla f(x), \gamma_0'(0) \rangle \\
    &=\kappa \cdot d(x,y) -\frac{\langle \nabla f(y),\nabla u(y) \rangle}{\phi'(z_y)} + \frac{\langle \nabla f(x), \nabla u(x) \rangle}{\phi(z_x)}
\end{align}

%Combining \eqref{eq3.9} and \eqref{eq3.10}, we arrive at
%\begin{equation}\label{eq3.11}
%    \sum_{i=1}^{n-1} \left(\frac{u_{ii}(y)}{\phi'(z_y)} -\frac{u_{ii}(x)}{\phi'(z_x)} \right)
%    -\frac{\langle \nabla f(y),\nabla u(y) \rangle}{\phi'(z_y)} + \frac{\langle \nabla f(x), \nabla u(x)\rangle }{\phi'(z_x)}
%    \leq - \kappa \cdot d(x,y)
%\end{equation}

Combining \eqref{eq3.7}, \eqref{eq3.8}, and \eqref{eq3.9} together, and using \eqref{eq3.10}, we obtain the following inequality:
\begin{eqnarray*}
0&\geq &(p-1)\frac{\partial^2 Z}{\partial x_n^2} +(p-1)\frac{\partial^2 Z}{\partial x_n^2}
+ \sum_{i=1}^{n-1}\left( \frac{\partial^2 Z}{\partial x_i^2}+\frac{\partial^2 Z}{\partial y_i^2} \right) \\
&=& \frac{1}{\phi'(z_y)} \left((p-1)u_{nn}(y) +\sum_{i=1}^{n-1} u_{ii}(y) -(p-1)\phi''(z_y)  \right)  \\
&&-\frac{1}{\phi'(z_x)} \left((p-1)u_{nn}(x) +\sum_{i=1}^{n-1} u_{ii}(x)  -(p-1)\phi''(z_x)  \right) +\int_{\gamma_0} \Ric(e_n,e_n) \\
&\geq& \frac{1}{\phi'(z_y)} \left((p-1)u_{nn}(y) +\sum_{i=1}^{n-1} u_{ii}(y) - \langle \nabla f(y),\nabla u(y) \rangle -(p-1)\phi''(z_y)  \right)  \\
&&-\frac{1}{\phi'(z_x)} \left((p-1)u_{nn}(x) +\sum_{i=1}^{n-1} u_{ii}(x) - \langle \nabla f(x),\nabla u(x) \rangle -(p-1)\phi''(z_x)  \right)  \\
&& +\kappa \cdot d(x,y) \\
&=& \frac{1}{\phi'(z_y)} \frac{\Delta_{p,f}u(y)}{|Du(y)|^{p-2}} -\frac{1}{\phi'(z_x)} \frac{\Delta_{p,f}u(x)}{|Du(x)|^{p-2}} -\frac{(p-1)\phi''(z_y)}{\phi'(z_y)} +\frac{(p-1)\phi''(z_x)}{\phi'(z_x)}  \\
&& +\kappa \cdot d(x,y) \\
&=& \frac{-\lambda |u(y)|^{p-2}u(y)}{\phi'(z_y)^{p-1}}-\frac{-\lambda |u(x)|^{p-2}u(x)}{\phi'(z_x)^{p-1}}  -\kappa z_y+\frac{\lambda |\phi(z_y)|^{p-2}\phi(z_y)}{ \phi'(z_y)^{p-1}} \\
&& + \kappa z_x -\frac{\lambda |\phi(z_x)|^{p-2}\phi(z_x)}{\phi'(z_x)^{p-1}}  +\kappa \cdot d(x,y)\\
&=& -\kappa \left(\Psi(u(y))-\Psi(u(x)) -d(x,y) \right)= -\kappa m >0,
\end{eqnarray*}
This is a contradiction since $\kappa <0$ and $m>0$, and we finish the proof when $\p M = \emptyset$.

For the case $M$ has boundary, the only difference in the proof is that the point $x$ or $y$ may lie on the boundary of $M$. This can be easily ruled out by the strict convexity of $\partial M$ and the Neumann boundary condition.  If $x\in \partial M$, then at $x$,
\begin{equation*}
   \frac{\partial}{\partial \nu} \left(\Psi(u(y))-\Psi(u(x))-d(x,y) \right) = - \frac{\partial}{\partial \nu}d(x,y) >0,
\end{equation*}
contradicting the strict convexity of $\p M $.
%where $\nu$ is the interior unit normal at $x$.
\end{proof}

%\begin{proof}[Proof of Theorem \ref{Thm gradient comparison} for $\kappa  =0$]
%This can be done by a perturbation of the proof for $\kappa <0$. We consider, instead, the solutions $\phi_\delta$ of
%\begin{equation*}
%    (p-1)|\phi_\delta'|^{p-2}\phi_\delta'' -(\kappa-\delta)\, t  |\phi_\delta'|^{p-2}\phi_\delta' =-\lambda_p |\phi_\delta|^{p-2} \phi_\delta,
%\end{equation*}
%with initial conditions $\phi_\delta(0)=\phi(0), \phi_\delta'(0)=\phi'(0).$
%The proof for the case $\kappa <0$ shows that
%for sufficiently small $\delta$, we have
%\begin{equation}
%    \Psi_\delta(u(y)) -\Psi_\delta(u(x)) -d(x,y) \leq 0.
%\end{equation}
%with $\Psi_\delta$ being the inverse of $\phi_\delta$. Letting $\delta \to 0$ yields the desired estimate.

%\end{proof}

%As consequence of the sharp gradient estimates, we get the eigenvalue comparison theorem when the eigenfunction has a minimum value oppsite to its maximum value.
%\begin{corollary}
%if $u_{\min} =-u_{\max}$,
%\end{corollary}

%This does not work when $u_{\min} \neq -u_{\max}$. Actually, this is the main difficulty of the whole problem.

%\newpage
\section{The One-dimensional equation}
In this section, we examine the one-dimensional equation \eqref{ODE}:
\begin{equation}\label{eq4.1}
     |w'|^{p-2}w'' -\kappa \, t |w'|^{p-2}w'+\lambda |w|^{p-2}w =0.
\end{equation}
%Solutions of \eqref{eq4.1} are used in the next section as a comparison for the eigenfunction $u$ of the weighted $p$-Lalplacian.
For the purpose of getting sharp eigenvalue estimates, we need to show that for any eigenfunction $u$ of $\Delta_{p,f}$ with eigenvalue $\l >0$, there exist an interval $[a,b]$ and a solution $w$ of \eqref{eq4.1} such that $w$ is strictly increasing on $[a,b]$ with $w(a)=u_{\min}$ and $w(b)=u_{\max}$.
We achieve this by varing the initial data.

Fix $\l >0$ and $\kappa \leq  0$.
We consider the %define the function $w=w_{a,\lambda,\kappa}$ to be the solution of the
following initial value problem(IVP)
\begin{equation}\label{IVP}
    \begin{cases}
    |w'|^{p-2}w'' -\kappa \, t |w'|^{p-2}w'+\lambda |w|^{p-2}w =0, \\
    w(a)=-1, w'(a)=0,
    \end{cases}
\end{equation}
with $a\in \R$.
%Equivalently, $w$ solves
%\begin{equation}\label{IPV2}
%    \begin{cases}
%    \frac{d}{dt}\left(e^{-\frac{\kappa}{2}t^2} |w'|^{p-1}w'\right) + \lambda \, e^{-\frac{\kappa}{2}t^2} |w|^{p-2}w =0, \\
%    w(a)=-1, w'(a)=0.
%    \end{cases}
%\end{equation}
%We omit the dependence of parameters when there is no risk of confusion.
In proving the existence, uniqueness, and continuous dependence with respect to the parameters for the solutions of the initial value problem \eqref{IVP}, the fundamental role is
played by the generalized Pr\"ufer transformation introduced by Elbert\cite{Elbert79}.
To begin with, we recall some basic definitions and properties of $p$-trigonometric functions and refer the reader to \cite[Chapter 1]{DP05} for a more detailed study. For $1<p<\infty$, let $\pi_p$ be the positive number defined by
\begin{equation*}
    \pi_p=\int_{-1}^1 \frac{ds}{(1-s^p)^{1/p}} =\frac{2\pi}{\sin(\pi/p)}.
\end{equation*}
The $p$-sine function $\sin_p:\R \to [-1,1]$ is defined implicitly on $[-\pi_p/2,3\pi_p/2]$ by
\begin{equation*}
    \begin{cases}
    t=\int_0^{\sin_p(t)} \frac{ds}{(1-s^p)^{1/p}} &  \text{ if } t\in [-\frac{\pi_p}{2},\frac{\pi_p}{2}], \\
    \sin_p(t)=\sin_p(\pi_p-t) & \text{ if }  t\in [\frac{\pi_p}{2}, \frac{3\pi_p}{2}],
    \end{cases}
\end{equation*}
and is periodic on $\R$ with period $2\pi_p$. It's easy to see that for $p\neq 2$ this function is smooth around noncritical points, but only $C^{1,\alpha}(\R)$ with $\a =\min\{p-1,(p-1)^{-1} \}$.
By defining
\begin{equation*}
 \cos_p(t)=\frac{d}{dt} \sin_p(t) \text{ and } \tan_p(t) =\frac{\sin_p(t)}{\cos_p(t)},
\end{equation*}
we then have the following generalized trigonometric identities:
\begin{align*}
   & |\sin_p(t)|^p+|\cos_p(t)|^p =1, \\
   &    \frac{d}{dt}\tan_p(t) =\frac{1}{|\cos_p(t)|^p} =1+|\tan_p(t)|^p, \\
   & \frac{d}{dt}\arctan_p(t) =\frac{1}{1+|t|^p}.
\end{align*}
Let $\alpha=\left(\frac{\lambda}{p-1}\right)^{p-1}$.
We introduce the $p$-polar coordinates $r$ and $\theta$ defined by
\begin{equation}
    \alpha w =r \sin_{p}(\theta), \; w'=r \cos_p(\theta),
\end{equation}
or equivalently,
\begin{equation}
    r=\left( (w')^p +\alpha^p w^p \right)^{\frac 1 p}, \; \theta=\arctan_p\left(\frac{\alpha w}{w'}\right).
\end{equation}
If $w$ is a solution of \eqref{IVP}, then direct calculation shows that $\theta$ and $r$ satisfy
\begin{align}\label{IVP 3}
    & \begin{cases}
    \theta'=\alpha -\frac{\kappa t}{p-1} \cos_p^{p-1}(\theta)\sin_p(\theta), \\
    \theta(a) = -\frac{\pi_p}{2}, \; ( \text{mod} \, \pi_p);
    \end{cases} \\ \label{IVP 4}
    & \begin{cases}
    \frac{d}{dt}\log r=\frac{\kappa t}{p-1} \cos_p^{p}(\theta),  \\
    r(a) = \alpha.
    \end{cases}
\end{align}
Since both $\sin_p(t)$ and $\cos_p^{p-1}(t)$ are Lipschitz functions with Lipschitz constant $1$, we can apply Cauchy's theorem to obtain existence, uniqueness, and continuous dependence on the parameters. Indeed, we have the following proposition.
\begin{prop}
For any $a \in \R$, there exists a unique solution $w$ to \eqref{IVP} defined on $\R$ with $w, (w')^{p-2}w' \in C^1(\R)$. Moreover, the solution depends continuouly on the parameters in the sense of local uniform covergence of $w$ and $\dot{w}$ in $\R$.
\end{prop}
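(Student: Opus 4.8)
The plan is to pass to the generalized Pr\"ufer ($p$-polar) coordinates $(r,\theta)$ introduced above and read everything off from the first-order system \eqref{IVP 3}--\eqref{IVP 4}. The key structural observation is that the $\theta$-equation in \eqref{IVP 3} is \emph{decoupled}: its right-hand side $F(t,\theta)=\alpha-\frac{\kappa t}{p-1}\cos_p^{p-1}(\theta)\sin_p(\theta)$ does not involve $r$, is jointly continuous in $(t,\theta)$, and since $\sin_p$ and $\cos_p^{p-1}$ are bounded by $1$ and globally Lipschitz with constant $1$, their product is Lipschitz in $\theta$; hence $F$ is Lipschitz in $\theta$ with constant $\frac{2|\kappa|\,|t|}{p-1}$, which is locally bounded in $t$. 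The Picard--Lindel\"of (Cauchy) theorem then yields a unique local $C^1$ solution $\theta$ with $\theta(a)=-\pi_p/2$. Since moreover $|F(t,\theta)|\le\alpha+\frac{|\kappa|\,|t|}{p-1}$ is bounded on every compact $t$-interval \emph{independently of} $\theta$, the solution $\theta$ has at most linear growth and so extends to all of $\R$.

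Given this $\theta$, the equation \eqref{IVP 4} for $r$ is linear in $\log r$ and is solved explicitly by
\[
 r(t)=\alpha\,\exp\!\Big(\int_a^t \tfrac{\kappa s}{p-1}\cos_p^{p}(\theta(s))\,ds\Big),
\]
whose integrand is bounded by $\tfrac{|\kappa|\,|s|}{p-1}$, so $r$ is a well-defined, strictly positive, $C^1$ function on all of $\R$ with $r(a)=\alpha$. I would then \emph{define} $w(t):=\alpha^{-1}r(t)\sin_p(\theta(t))$ and verify that it solves \eqref{IVP}: unwinding the transformation and using \eqref{IVP 3}--\eqref{IVP 4} together with the recorded $p$-trigonometric identities gives $w'(t)=r(t)\cos_p(\theta(t))$, and a further differentiation (again via \eqref{IVP 3}--\eqref{IVP 4}) shows $w$ satisfies the differential equation; the initial conditions follow from $r(a)=\alpha$, $\sin_p(-\pi_p/2)=-1$, $\cos_p(-\pi_p/2)=0$. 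The regularity claim is cleanest in divergence form: $|w'|^{p-2}w'=r^{p-1}\cos_p^{p-1}(\theta)$ and $\frac{1}{p-1}\frac{d}{dt}\big(|w'|^{p-2}w'\big)=\kappa t\,|w'|^{p-2}w'-\lambda|w|^{p-2}w$, whose right-hand side is continuous in $t$, so $|w'|^{p-2}w'\in C^1(\R)$ (and of course $w\in C^1(\R)$).

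For uniqueness I would reverse the correspondence. If $\tilde w$ is any solution of \eqref{IVP} in this class, set $\tilde r:=(|\tilde w'|^p+\alpha^p|\tilde w|^p)^{1/p}$, so $\tilde r(a)=\alpha>0$; on the open set where $\tilde r>0$ the angle $\tilde\theta:=\arctan_p(\alpha\tilde w/\tilde w')$ is well defined and $(\tilde r,\tilde\theta)$ solves \eqref{IVP 3}--\eqref{IVP 4}, so by the uniqueness already established for that system $(\tilde r,\tilde\theta)\equiv(r,\theta)$ near $a$; since $r>0$ everywhere, a standard continuation argument shows $\tilde r$ cannot vanish and the identification persists on all of $\R$, whence $\tilde w\equiv w$. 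Continuous dependence on the parameters (the base point $a$, and $\lambda,\kappa$, on which $\alpha=(\lambda/(p-1))^{p-1}$ depends continuously) is then inherited from \eqref{IVP 3}--\eqref{IVP 4}: the Lipschitz bounds above, together with Gr\"onwall's inequality, give local uniform convergence of $\theta$ and $r$ when the parameters converge, and since $\sin_p,\cos_p\in C^1(\R)$ this transfers to $w=\alpha^{-1}r\sin_p(\theta)$ and $\dot w=r\cos_p(\theta)$.

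The step I expect to require the most care is the book-keeping of \emph{degeneracy at the zeros of $w'$} (equivalently, where $\cos_p\theta=0$): there the original second-order equation in \eqref{IVP} is degenerate for $p>2$ and singular for $1<p<2$, so ``solution'' must be understood in the divergence form above, and one must check that the passage $w\leftrightarrow(r,\theta)$ stays valid there --- in particular that $r$ never vanishes along the solution, which is exactly what the explicit positive formula for $r(t)$ guarantees. Everything else is the classical ODE package applied to the manifestly Lipschitz system \eqref{IVP 3}--\eqref{IVP 4}.
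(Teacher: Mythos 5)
Your proposal is correct and follows essentially the same route as the paper, which likewise reduces the problem to the decoupled Pr\"ufer system \eqref{IVP 3}--\eqref{IVP 4} and invokes Cauchy's theorem using the Lipschitz bounds on $\sin_p$ and $\cos_p^{p-1}$; your write-up simply makes explicit the global extension, the reconstruction of $w$ from $(r,\theta)$, and the Gr\"onwall argument for continuous dependence that the paper leaves implicit. The only point worth flagging is the normalizing constant: for the unwinding step to return the original equation one needs $\alpha=(\lambda/(p-1))^{1/p}$ rather than the exponent $p-1$ as printed, a typo inherited from the paper rather than an error in your argument.
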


\begin{remark}
For the case $\kappa =0$, equation \eqref{eq4.1} can be explicitly solved.
By \eqref{IVP 4}, we have $r(t)= \left( w'^p +\alpha^p w^p \right)^{\frac 1 p} \equiv \a$. This implies, by separation of variables, that $w(t)=A \sin_p(\l^{\frac 1 p} x +B)$ for some constants $A$ and $B$. The initial conditions $w(a)=-1$ and $w'(a)=0$ determines $A$ and $B$.
\end{remark}

Next, we investigate the qualitative behavior of solutions to \eqref{IVP} when $\kappa <0$.
%We begin with $\kappa =0$, the easiest case.
%The qualitative behavior of solutions to \eqref{IVP} are quite different depending on the sign of $\kappa$. So we treat them separately.
%\subsection{The case $\kappa =0$}
%\subsection{The case $\kappa<0$}
\begin{prop}\label{prop4.2}
Fix $\alpha>0$ and $\kappa <0$. There exists a unique $\bar{a}>0$ such that the solution $w_{-\bar{a}}$ to the IVP \eqref{IVP} is odd. In particular, $w_{-\bar{a}}$ restricted to $[-\bar{a},\bar{a}]$ has nonnegative derivative and has maximum value equal to one.
\end{prop}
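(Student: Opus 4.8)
The plan is to run a shooting argument in the initial time $a$, with the generalized Pr\"ufer angle as the shooting variable, so as to bypass the degeneracy of \eqref{eq4.1} at zeros of $w'$. The key preliminary is that \eqref{eq4.1} is invariant under $w\mapsto-w$ and under $t\mapsto-t$, hence under $w(\cdot)\mapsto-w(-\cdot)$; therefore $w_{-a}$ is odd if and only if $w_{-a}(0)=0$. Indeed, if $w_{-a}(0)=0$ then $\tilde w(t):=-w_{-a}(-t)$ solves \eqref{eq4.1} with the same Cauchy data at $t=0$, a non-degenerate point since $w_{-a}\not\equiv0$ forces $r\neq0$ there, so uniqueness for \eqref{IVP} gives $\tilde w\equiv w_{-a}$. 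Thus everything reduces to locating the zeros of the continuous scalar function $F(a):=w_{-a}(0)$, $a\ge0$.

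Next I would pass to the $p$-polar coordinates $\alpha w=r\sin_p\theta$, $w'=r\cos_p\theta$ and work with the angle $\theta_{-a}$, which solves \eqref{IVP 3} with $\theta_{-a}(-a)=-\pi_p/2$; its right-hand side is Lipschitz in $\theta$, giving existence, uniqueness, the comparison principle, and continuous dependence on $a$. Two sign facts, which is precisely where $\kappa<0$ is used, drive the argument: (i) for $t\le0$ and $\theta\in[-\pi_p/2,0]$ the correction $-\tfrac{\kappa t}{p-1}\cos_p^{p-1}(\theta)\sin_p(\theta)$ is the product of two nonpositive factors, hence $\ge0$, so $\theta_{-a}'\ge\alpha>0$ there, while $\theta_{-a}'\le\alpha$ when $t\le0$ and $\theta\in(0,\pi_p/2)$; and (ii) at $\theta\in\{0,\pm\pi_p/2\}$ one has $\theta_{-a}'=\alpha>0$ for \emph{every} $t$. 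Thus $\theta_{-a}$ crosses each of these levels strictly upward and never returns to one it has exceeded, and $w_{-a}$ is nondecreasing wherever $\theta_{-a}\in[-\pi_p/2,\pi_p/2]$.

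For existence: $F(0)=w_0(0)=-1<0$, while for $a$ slightly above $\pi_p/(2\alpha)$ fact (i) forces $\theta_{-a}$ to reach $0$ at some $t_0\in(-a,0)$, after which it stays positive but---again by (i)---cannot climb past $\pi_p$ before $t=0$, so $\theta_{-a}(0)\in(0,\pi_p)$ and $F(a)=\tfrac1\alpha\, r(0)\sin_p\theta_{-a}(0)>0$. Since $a\mapsto\theta_{-a}(0)$ is continuous, equals $-\pi_p/2$ at $a=0$, and is strictly increasing---if $a_1<a_2$ then $\theta_{-a_2}(-a_1)>-\pi_p/2=\theta_{-a_1}(-a_1)$, and the comparison principle for \eqref{IVP 3} propagates this to $t=0$---there is exactly one $\bar a>0$ with $\theta_{-\bar a}(0)=0$, which is the smallest $a$ for which $w_{-a}$ is odd, so $w_{-\bar a}$ is the required odd solution. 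Finally, for the ``in particular'' part: $w_{-\bar a}$ odd makes $\theta_{-\bar a}$ odd (the polar relations turn oddness of $w$, evenness of $w'$, and $\theta_{-\bar a}(0)=0$ into $\theta_{-\bar a}(-t)=-\theta_{-\bar a}(t)$), and on $[-\bar a,0)$ the angle climbs from $-\pi_p/2$ without reaching $0$ early---were $\theta_{-\bar a}(t_*)=0$ for some $t_*\in(-\bar a,0)$, oddness would force $\theta_{-\bar a}$ back to $0$ at $-t_*\in(0,\bar a)$ while $\theta_{-\bar a}$ is already positive just right of $0$, hence a downward crossing of $0$, impossible by (ii). Therefore $\theta_{-\bar a}\in[-\pi_p/2,\pi_p/2]$ on all of $[-\bar a,\bar a]$, so $w_{-\bar a}'=r\cos_p\theta_{-\bar a}\ge0$ there and $\max_{[-\bar a,\bar a]}w_{-\bar a}=w_{-\bar a}(\bar a)=-w_{-\bar a}(-\bar a)=1$.

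The main difficulty is controlling $\theta_{-a}$ in the presence of the unbounded drift coefficient $\kappa t$: one must ensure the shooting selects the first crossing and that $\theta_{-a}(0)$ does not overshoot before $t=0$, which is exactly what the two sign facts---and hence the hypothesis $\kappa<0$---are for; the low regularity of $w$ at zeros of $w'$, by contrast, is harmless, being absorbed into the Lipschitz Pr\"ufer system \eqref{IVP 3}--\eqref{IVP 4}.
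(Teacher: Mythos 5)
Your argument is correct, and it runs on the same engine as the paper's proof: the Pr\"ufer system \eqref{IVP 3}--\eqref{IVP 4}, the symmetry-plus-uniqueness trick (the drift $-\kappa t$ is odd in $t$, so a zero of $w_{-a}$ at $t=0$ forces oddness), and the sign facts for the correction term that come from $\kappa<0$. The difference is organizational: the paper integrates the angle equation once from the center, taking $\theta(0)=0$, gets $\theta(t)=-\theta(-t)$ by uniqueness, and uses $\dot\theta\ge\alpha$ on the relevant region to produce $-\bar a\in(-\pi_p/(2\alpha),0)$ with $\theta(-\bar a)=-\pi_p/2$ (evenness of $r$ then finishes); you instead shoot from the left endpoint and locate $\bar a$ via continuity and strict monotonicity of $a\mapsto\theta_{-a}(0)$. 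The paper's route is shorter, needing no intermediate-value or comparison-in-$a$ step, while yours makes explicit that $\bar a$ is the unique parameter with $\theta_{-\bar a}(0)=0$, and your confinement of $\theta_{-\bar a}$ to $[-\pi_p/2,\pi_p/2]$ on $[-\bar a,\bar a]$ is a clean way to get the monotonicity and the maximum value $1$. Two small points: the step ``cannot climb past $\pi_p$ before $t=0$'' deserves one more line --- by (i) the climb through $(0,\pi_p/2)$ alone costs time at least $\pi_p/(2\alpha)$, and crossing $(\pi_p/2,\pi_p)$ costs a further positive amount since the speed is bounded on the compact window, so it suffices to take $a$ close enough to $\pi_p/(2\alpha)$; and note that, exactly like the paper's proof, what is really established is uniqueness of the $a$ with $\theta_{-a}(0)=0$ (equivalently, of the odd solution that is monotone on $[-\bar a,\bar a]$): for larger $a$ the angle could equal $\pi_p,2\pi_p,\dots$ at $t=0$, giving odd but oscillating solutions of \eqref{IVP}, so the literal ``unique odd solution'' reading is not addressed by either argument --- this is a defect of the statement, not of your proof, and only the monotone solution is used later.
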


\begin{proof}
Consider the IVP
\begin{equation}\label{IVP 5}
    \begin{cases}
    \dot{\theta}=\alpha -\frac{\kappa t}{p-1} \cos_p^{p-1}(\theta)\sin_p(\theta), \\
    \theta(0) = 0;
    \end{cases}
\end{equation}
If $\theta(t)$ is a solution, then so is the function $-\theta(-t)$ with the same initial data. The uniqueness of solutions implies $\theta(t)=-\theta(-t)$, so $\theta(t)$ is an odd function.
It follows from $\kappa < 0$ that $\dot{\theta} \geq \alpha$ whenever $\theta \in [-\pi_p/2,\pi_p/2]$. Thus there exists $-\bar{a} \in (-\pi_p/(2\alpha), 0)$ such that $\theta(-\bar{a})=-\pi_p/2$.
It's easily seen that the corresponding solution $r(t)$ to \eqref{IVP 4} is even, regardless of its initial value.
The proposition follows by translating obtained information on $\theta$ and $r$ back to $w$.
\end{proof}

%In order to get sharp estimate on the eigenvalue, we need to construct a model function $w(t)$ such that $w_{\min} = u_{\min} =-1$ and $w_{\max} =u_{\max} \in (0,1]$.

For the solution $w$ of \eqref{IVP}, we define
\begin{align*}
    b(a) &=\inf \{ b>a : \dot{w}(b)=0 \}, \\
   m(a) &= w_a (b(a)), \\
   \delta(a) & =b(a)-a.
\end{align*}
In other words, $b(a)$ is the first value $b>a$ such that $\dot{w}(b)=0$ with the convention that $b(a)=\infty$ if such a value does not exist.  Thus $w$ is strictly increasing on the interval $[a, b(a)]$ and $m(a)$ is the maximum of $w$ on $[a,b(a)]$.
%The number $\l$ is equal to the first nonzero Neumann eigenvalue of the operator $ $ on $[a,b(a)]$.
The function $\delta(a)$ measures the length of the interval where $w(a)$ increases from $-1$ to $m(a)$.
%, so we call it the diameter of the model function.

We are concerned with the range of the function $m(a)$ as $a$ varies.
Since $w$ is an eigenfunction for the Neumann problem $ |w'|^{p-2}w'' -\kappa \, t |w'|^{p-2}w'+\lambda |w|^{p-2}w =0 $
with eigenvalue $\l$ on $[a,b(a)]$, we have
$$0= \int_a^{b(a)} \frac{d}{dt} \left(e^{-\frac{\kappa}{2}t^2} |w'(t)|^{p-2} w'(t) \right) dt=\int_a^{b(a)} -\l |w(t)|^{p-2} w(t) e^{-\frac{\kappa}{2}t^2} dt$$
and therefore $m(a) >0$.
By Proposition \ref{prop4.2}, we have $m(-\bar{a})=1$.   We shall show in the next proposition that $m(a)$ goes to zero as $a$  goes to $\infty$. It then follows from the Intermediate Value Theorem that the range of $m(a)$ covers $(0,1]$ when $a$ varies in $[-\bar{a}, \infty)$.
\begin{prop}\label{prop4.3}
$$\lim_{a \to \infty} m(a) =0. $$
\end{prop}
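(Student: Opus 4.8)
The plan is to analyze the IVP \eqref{IVP} for $a\to\infty$ via the Prüfer-type system \eqref{IVP 3}--\eqref{IVP 4} and show that the first critical point $b(a)$ stays close to $a$ while the radial quantity $r$ collapses, forcing $m(a)=w_a(b(a))\to 0$. The key observation is that for large $a$ the drift term $-\frac{\kappa t}{p-1}\cos_p^{p-1}(\theta)\sin_p(\theta)$ in the $\theta$-equation is large and has a definite sign on the relevant arc. Starting from $\theta(a)=-\frac{\pi_p}{2}$ (mod $\pi_p$), on the interval where $\theta\in(-\frac{\pi_p}{2},\frac{\pi_p}{2})$ we have $\cos_p(\theta)\ge 0$, and since $\kappa<0$ and $t\ge a>0$, the term $-\frac{\kappa t}{p-1}\cos_p^{p-1}(\theta)\sin_p(\theta)$ is $\le 0$ when $\sin_p(\theta)\ge 0$ — wait, more carefully: for $t>0$ and $\kappa<0$, $-\frac{\kappa t}{p-1}>0$, so the drift pushes $\theta$ upward when $\sin_p\theta>0$ and downward when $\sin_p\theta<0$. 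The clean way to organize the estimate is: first I would show $\dot\theta\ge\alpha$ as long as $\theta\in[-\frac{\pi_p}{2},0]$ (both terms nonnegative there since $\sin_p\theta\le 0$, $\cos_p^{p-1}\theta\ge0$, $t>0$, $\kappa<0$), so $\theta$ reaches $0$ by time $a+\frac{\pi_p}{2\alpha}$; but on $[0,\frac{\pi_p}{2})$ the drift is now $\ge \frac{|\kappa| a}{p-1}\cos_p^{p-1}(\theta)\sin_p(\theta)$, which is large, so $\theta$ races through $[0,\frac{\pi_p}{2}]$ in time $O(1/a)$.

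More precisely, the second step: on $[0,\frac{\pi_p}{2}]$ we have $\dot\theta\ge \frac{|\kappa|\,a}{p-1}\cos_p^{p-1}(\theta)\sin_p(\theta)\ge 0$, hence $b(a)-a=\delta(a)\le \frac{\pi_p}{2\alpha}+ \int_0^{\pi_p/2}\frac{(p-1)\,d\theta}{|\kappa|\,a\,\cos_p^{p-1}(\theta)\sin_p(\theta)}$; but this integral diverges at both endpoints, so I need a slightly more careful bookkeeping — I would instead integrate only up to $\theta=\frac{\pi_p}{2}-\eps$ for the divergent upper endpoint and handle the final sliver separately, or better, track $r$ directly. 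The cleanest route is the third step: from \eqref{IVP 4}, $\frac{d}{dt}\log r = \frac{\kappa t}{p-1}\cos_p^p(\theta)\le 0$ for $t>0$, so $r$ is nonincreasing; and the decrease is controlled by $\log\frac{r(b(a))}{r(a)} = \frac{\kappa}{p-1}\int_a^{b(a)} t\cos_p^p(\theta)\,dt$. Changing variables from $t$ to $\theta$ on the arc $[0,\frac{\pi_p}{2}]$ using $dt = \dot\theta^{-1}d\theta$ and bounding $\dot\theta\ge \frac{|\kappa|a}{p-1}\cos_p^{p-1}(\theta)\sin_p(\theta)$ on the portion where $\sin_p\theta$ is bounded below, one gets $\log\frac{r(b(a))}{r(a)}\le -\,c\,\int \cos_p(\theta)/\sin_p(\theta)\,d\theta$ on a fixed sub-arc, a constant independent of $a$ — which only shows $r$ drops by a fixed factor, not to zero. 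To get $r\to 0$ I would iterate: once $\theta$ has advanced to $\frac{\pi_p}{2}$ (i.e. at $t=b(a)$) we have $w_a(b(a)) = \alpha^{-1} r(b(a))$ since $\sin_p(\frac{\pi_p}{2})=1$, so $m(a)=\alpha^{-1}r(b(a))$, and I need $r(b(a))\to 0$; the gain per unit advance in $\theta$ is weighted by $t\sim a$, so summing the contributions over $\theta\in[0,\frac{\pi_p}{2}]$ with the weight $a$ gives $\log\frac{r(b(a))}{\alpha}\le -a\cdot(\text{positive quantity bounded below})$, hence $\to-\infty$.

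The main obstacle, and the step deserving the most care, is making the change of variables $t\mapsto\theta$ rigorous near $\theta=0$ and $\theta=\frac{\pi_p}{2}$, where $\cos_p^{p-1}(\theta)\sin_p(\theta)$ vanishes and the lower bound $\dot\theta\ge\frac{|\kappa|a}{p-1}\cos_p^{p-1}(\theta)\sin_p(\theta)$ degenerates. Near $\theta=0$ this is harmless because there $\dot\theta\ge\alpha>0$ still holds (the $\alpha$ term dominates), so I would split the arc at a fixed small $\theta_0>0$: on $[0,\theta_0]$ use $\dot\theta\ge\alpha$ and note $t\cos_p^p\theta\ge a\cos_p^p\theta_0$, giving a contribution to $\log r$ of order $-a\theta_0\cos_p^p(\theta_0)/\alpha$; on $[\theta_0,\frac{\pi_p}{2}]$ use $\dot\theta\ge\frac{|\kappa|a}{p-1}\cos_p^{p-1}(\theta)\sin_p(\theta_0)$ and the resulting $\theta$-integral $\int_{\theta_0}^{\pi_p/2}\frac{(p-1)\,t\,\cos_p^p(\theta)}{|\kappa|a\cos_p^{p-1}(\theta)\sin_p(\theta_0)}\,d\theta$ for the radial decrease — but wait, this is the integrand for $\delta(a)$, not for $\log r$; let me keep the two separate. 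For $\log r$ I will use the $[0,\theta_0]$ piece, which already delivers a decrease proportional to $a$, hence $r(b(a))\le \alpha\,e^{-c a}\to 0$, and therefore $m(a)=\alpha^{-1}r(b(a))\to 0$. The remaining technical point is to confirm $b(a)<\infty$ for all large $a$ (so that $m(a)$ is well-defined): this follows since $\dot\theta>0$ throughout $[-\frac{\pi_p}{2},\frac{\pi_p}{2}]$ for $t>0$, so $\theta$ cannot get stuck before reaching $\frac{\pi_p}{2}$, and the advance is uniform on $[0,\theta_0]$. Assembling: $b(a)-a\to 0$ is a byproduct but not needed; the essential chain is $\theta(a)=-\frac{\pi_p}{2}\Rightarrow \theta$ reaches $0$ by $a+\frac{\pi_p}{2\alpha}\Rightarrow$ on the next fixed amount of $\theta$-progress the weight $t\ge a$ forces $\log r$ down by $\gtrsim a\Rightarrow r(b(a))\to 0\Rightarrow m(a)=\alpha^{-1}r(b(a))\to 0$.
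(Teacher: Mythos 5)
Your framework is the same as the paper's---pass to the Pr\"ufer variables, observe that $m(a)=\alpha^{-1}r(b(a))$ at $\theta=\pi_p/2$, and reduce everything to showing $\int_a^{b(a)}t\cos_p^p(\theta)\,dt\to\infty$---but the estimate you run on this integral rests on a sign error. For $t>0$ and $\kappa<0$ the drift $-\frac{\kappa t}{p-1}\cos_p^{p-1}(\theta)\sin_p(\theta)$ has the sign of $\sin_p(\theta)$, so on the arc $\theta\in[-\pi_p/2,0]$ it is \emph{nonpositive} and $\dot\theta\le\alpha$ (not $\ge\alpha$ as you assert; correspondingly $t_0\ge a+\pi_p/(2\alpha)$ rather than $\le$), while on $[0,\pi_p/2]$ it is nonnegative and $\dot\theta\ge\alpha$. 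On the arc $[0,\theta_0]$ where you choose to work, the inequality $\dot\theta\ge\alpha$ gives an \emph{upper} bound on the traversal time, not the lower bound $\theta_0/\alpha$ that your estimate needs: since the drift there is of size comparable to $a\,\theta$, the trajectory crosses $[0,\theta_0]$ in a time of order $a^{-1}\log a$, so your claimed contribution $\sim a\,\theta_0\cos_p^p(\theta_0)/\alpha$ to $-\log r$, and the resulting bound $r(b(a))\le\alpha e^{-ca}$, are unjustified (the latter is in fact false; the true gain from that arc is only logarithmic in $a$).

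The repair is to run the estimate on the lower arc, where the inequality points the right way; this is exactly what the paper does. Fix $\eps>0$ and let $a_\eps<t_0$ be the times at which $\theta=-\pi_p/2+\eps$ and $\theta=0$. Because $\dot\theta\le\alpha$ on $[a,t_0]$, the traversal time satisfies $t_0-a_\eps\ge(\pi_p/2-\eps)/\alpha$, a bound \emph{uniform in $a$}; combined with $\cos_p^p(\theta)\ge\cos_p^p(-\pi_p/2+\eps)$ on this arc and $t\ge a$, one gets $\int_{a_\eps}^{t_0}t\cos_p^p(\theta)\,dt\ge\frac12\cos_p^p(-\pi_p/2+\eps)\,(t_0-a_\eps)(t_0+a_\eps)\to\infty$. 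Finally, your justification that $b(a)<\infty$ (``$\dot\theta>0$ throughout'') is also incorrect for large $a$: on $(-\pi_p/2,0)$ the drift is negative of size comparable to $a$ and can overwhelm $\alpha$, so $\dot\theta$ may vanish and $\theta$ may never reach $0$; this is precisely why the convention $b(a)=\infty$ is adopted, and that degenerate case requires separate treatment rather than dismissal.
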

\begin{proof}
For $a > -\bar{a}$, consider the IVP
\begin{equation}\label{IVP 6}
    \begin{cases}
    |w'|^{p-2}w'' -\kappa \, t |w'|^{p-2}w'+\lambda |w|^{p-2}w =0, \\
    w(a)=-1, w'(a)=0,
    \end{cases}
\end{equation}
and the associated IVP
\begin{align}\label{IVP 7}
    & \begin{cases}
    \theta'=\alpha -\frac{\kappa t}{p-1} \cos_p^{p-1}(\theta)\sin_p(\theta), \\
    \theta(a) = -\frac{\pi_p}{2};
    \end{cases} \\ \label{IVP 8}
    & \begin{cases}
    \frac{d}{dt}\log r=\frac{\kappa t}{p-1} \cos_p^{p}(\theta),  \\
    r(a) = \alpha.
    \end{cases}
\end{align}
At $b(a)$, we have $w'=0$, which implies $\theta =\frac{\pi_p}{2}$ and $m(a) = w = \a\, r \sin_p(\theta) =\a \, r$.
%So it suffices to show $\lim_{a \to -\infty} r(b(a)) =0$.
In view of \eqref{IVP 8}, we have
\begin{equation}
    \log r(b(a)) -\log \a =\int_a^{b(a)} \frac{\kappa \, t}{p-1} \cos_p^p(\theta) dt.
\end{equation}
So it suffices to show $\lim_{a \to \infty} \int_a^{b(a)} t  \cos_p^p(\theta) dt  = \infty$.

Since $\theta$ increases from $-\pi_p/2$ to $\pi_p/2$ on the interval $[a, b(a)]$, there exists a unique $t_0\in (a, b(a))$ such that $\theta(t_0) =0$. On the interval $[a, t_0]$, we have $\theta' \leq \a $, which implies $t_0 \geq a +\pi_p/(2\a)$. For $\eps >0$ sufficiently small, if we let $a_\eps$ be the point in $(a, t_0)$ with $\theta (a_\eps) =-\pi_p/2 +\eps$, then we have
\begin{equation*}
\int_a^{b(a)} t  \cos_p^p(\theta) dt \geq \int_{a_\eps}^{t_0} t  \cos_p^p(\theta)  dt \geq
%\cos_p^p(-\pi_p/2 +\eps) \int_{a(\eps)}^{t_0} t  \, dt
\frac{1}{2}\cos_p^p(-\pi_p/2 +\eps) (t_0 -a_\eps)(t_0 +a_\eps)
\end{equation*}
The right hand side above goes to infinity as $a$ goes to infinity in view of  $t_0 -a_\eps \geq \left(\pi_p/2 -\eps \right)/(2\a)$.
\end{proof}

\begin{prop}\label{prop4.4}
$\delta (a) \geq \delta(-\bar{a})$ for all $a \geq -\bar{a}$ with strict inequality if $a \neq -\bar{a}$.
%(Similar argument shows that $\delta (a) < \delta(\bar{a})$ for all $a < -\bar{a}$).
\end{prop}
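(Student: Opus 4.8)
The plan is to pass to the generalized Pr\"ufer angle of \eqref{IVP 7} and to show that $\delta$ is strictly increasing on $[-\bar{a},\infty)$. If $b(a)=\infty$ there is nothing to prove, so assume $b(a)<\infty$. Along $w_a$ the angle $\theta=\theta_a(t)$ obeys $\dot\theta=G(\theta,t)$, where $G(\theta,t):=\alpha+c\,t\,g(\theta)$, $g(\theta):=\cos_p^{p-1}(\theta)\sin_p(\theta)$ is odd, $c:=-\kappa/(p-1)>0$, and $\theta_a(a)=-\pi_p/2$. Two preliminary facts will drive the argument. First, solution graphs of the $\theta$-equation never cross (uniqueness), and $\theta_{-\bar{a}}$ from Proposition \ref{prop4.2} is odd in $t$, strictly increasing, with $\theta_{-\bar{a}}(0)=0$; hence for $a>-\bar{a}$ one has $\theta_a(t)<\theta_{-\bar{a}}(t)$ for all $t$, so $\theta_a(0)<0$ and the time $t_0(a)$ with $\theta_a(t_0(a))=0$ is strictly positive. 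Second, when $b(a)<\infty$ the angle $\theta_a$ is strictly increasing on $[a,b(a)]$: for $t\le 0$ one has $\theta_a(t)<0$, so $t\,g(\theta_a)\ge 0$ and $\dot\theta_a\ge\alpha$; for $\theta_a\in(0,\pi_p/2)$ one has $t>0$, so again $\dot\theta_a\ge\alpha$; and if $\dot\theta_a$ vanished at a positive time with $\theta_a\in(-\pi_p/2,0)$, then $\ddot\theta_a=c\,g(\theta_a)<0$ there, so $\theta_a$ would turn downward and, since on that strip $\dot\theta_a$ only decreases further as $t$ grows, would stay trapped near $-\pi_p/2$ and never reach $\pi_p/2$, forcing $b(a)=\infty$. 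Therefore, along the trajectory, $t$ is a strictly increasing $C^1$ function $t=t(\theta;a)$ of $\theta\in[-\pi_p/2,\pi_p/2]$, with $dt/d\theta=1/G(\theta,t)$, $t(-\pi_p/2;a)=a$, and $t(\pi_p/2;a)=b(a)$.

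The next step is to differentiate the trajectory in $a$. With $v:=\partial t/\partial a$, differentiating $dt/d\theta=1/G(\theta,t)$ yields the linear equation $dv/d\theta=-\,c\,g(\theta)\,G(\theta,t(\theta;a))^{-2}\,v$ with $v(-\pi_p/2;a)=1$, whence
\[
\delta'(a)=v(\pi_p/2;a)-1=\exp\left(-c\int_{-\pi_p/2}^{\pi_p/2}\frac{g(\theta)\,d\theta}{G(\theta,t(\theta;a))^2}\right)-1 .
\]
So it suffices to show that $I(a):=\int_{-\pi_p/2}^{\pi_p/2}g(\theta)\,G(\theta,t(\theta;a))^{-2}\,d\theta<0$ for $a>-\bar{a}$ (it equals $0$ at $a=-\bar{a}$, where $t(-\theta;-\bar{a})=-t(\theta;-\bar{a})$ makes the integrand odd in $\theta$). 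Since $g$ is odd, pairing $\theta$ with $-\theta$ gives
\[
I(a)=\int_0^{\pi_p/2}g(\theta)\left(\frac{1}{G(\theta,t(\theta))^2}-\frac{1}{G(-\theta,t(-\theta))^2}\right)d\theta ,
\]
and since $g(\theta)>0$ on $(0,\pi_p/2)$ the claim reduces to the pointwise estimate $G(\theta,t(\theta))^2>G(-\theta,t(-\theta))^2$ there.

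I would establish the pointwise estimate as follows. Since $t(\theta)>t_0(a)>0$ and $g(\theta)>0$, we have $G(\theta,t(\theta))=\alpha+c\,t(\theta)g(\theta)>\alpha>0$, while $G(-\theta,t(-\theta))=\alpha-c\,t(-\theta)g(\theta)$. If $G(-\theta,t(-\theta))\ge 0$, then $G(\theta,t(\theta))-G(-\theta,t(-\theta))=c\,g(\theta)\,(t(\theta)+t(-\theta))$, which is positive because (by the no-crossing ordering $\theta_a<\theta_{-\bar{a}}$) $\theta_a$ reaches the angles $\theta$ and $-\theta$ strictly later than $\theta_{-\bar{a}}$ does, while $\theta_{-\bar{a}}$, being odd, reaches them at times $s_\theta$ and $-s_\theta$, so $t(\theta)+t(-\theta)>s_\theta-s_\theta=0$. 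If instead $G(-\theta,t(-\theta))<0$, then $t(-\theta)>0$, and using $t(-\theta)<t(\theta)$,
\[
|G(-\theta,t(-\theta))|=c\,t(-\theta)g(\theta)-\alpha<c\,t(\theta)g(\theta)-\alpha<\alpha+c\,t(\theta)g(\theta)=G(\theta,t(\theta)).
\]
Either way $G(\theta,t(\theta))^2>G(-\theta,t(-\theta))^2$, hence $I(a)<0$ and $\delta'(a)>0$ for $a>-\bar{a}$; integrating from $-\bar{a}$ gives $\delta(a)>\delta(-\bar{a})$ for all $a>-\bar{a}$. I expect the main obstacle to be the reduction to the monotone case — ruling out that the Pr\"ufer angle stalls inside $(-\pi_p/2,0)$, so that $t(\theta;a)$ is genuinely defined and the differentiation is legitimate (alternatively this step can be replaced by a Gronwall estimate for $t(\theta;a_1)-t(\theta;a_0)$, $a_1>a_0$); in the stalling case one has $\delta(a)=\infty$ and the inequality is immediate.
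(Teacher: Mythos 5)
Your route is genuinely different from the paper's. You differentiate the hitting time in the initial point: after passing to the Pr\"ufer angle you derive the variational formula $\delta'(a)=\exp\bigl(-c\int_{-\pi_p/2}^{\pi_p/2}g(\theta)G(\theta,t(\theta;a))^{-2}\,d\theta\bigr)-1$ and show the integral is negative by pairing $\theta$ with $-\theta$ and comparing the hitting times $t(\pm\theta;a)$ with those of the odd solution $\theta_{-\bar{a}}$. That pointwise estimate is correct (and your second case, $G(-\theta,t(-\theta))<0$, is in fact vacuous once you know $\theta_a$ is strictly increasing on $[a,b(a)]$, since then $G>0$ along the whole trajectory). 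The paper instead works with a single fixed $a$: it reflects the solution, $\psi_-(t)=-\theta_a(-t)$, passes to the inverse functions $h=\psi_-^{-1}$, $g=\psi_+^{-1}$ and their average $m=\tfrac12(h+g)$, uses convexity of $z\mapsto(\alpha-\beta z)^{-1}$ to get the differential inequality $dm/d\theta\ge 1/(\alpha-m f(\theta))$, and concludes by ODE comparison with the odd solution $s=\vp^{-1}$, so that $\tfrac12\delta(a)=m(\pi_p/2)>s(\pi_p/2)=\bar{a}$. Both proofs ultimately rest on comparison with the odd solution, but the paper never differentiates in $a$.

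Two points need repair. First, your trapping argument is misstated: on the strip $\{t>0,\ \theta\in(-\pi_p/2,0)\}$ it is not true that $\dot\theta_a$ "only decreases as $t$ grows"; the correct (one-line) argument is that at any zero of $\dot\theta_a$ there one has $\ddot\theta_a=c\,g(\theta_a)<0$, so zeros of $\dot\theta_a$ are strict downward crossings and $\dot\theta_a$ can never return to $0$; hence $\theta_a$ decreases thereafter, stays below $0$, and never reaches $\pi_p/2$, giving $b(a)=\infty$. This step is not optional, since it is exactly what legitimizes the change of variables $t=t(\theta;a)$ in the substantive case $b(a)<\infty$. Second, and this is the genuine gap, the final step "integrating from $-\bar{a}$" presupposes that $\delta<\infty$ (so that $\delta'$ exists and your formula applies) on the entire segment $[-\bar{a},a]$, i.e.\ that the set $F=\{a\ge-\bar{a}:b(a)<\infty\}$ is an interval containing $-\bar{a}$. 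This is true (there is a threshold $a^*$ beyond which $b\equiv\infty$), but you never prove it, and without it your argument only yields the conclusion for $a$ in the component of $F$ containing $-\bar{a}$, plus the trivially infinite $a$'s; a hypothetical later component of $F$ would be untouched. It can be patched inside your framework: $F$ is relatively open, and by continuous dependence $\delta(a')\to\infty$ as $a'$ decreases to any boundary point $a_2\notin F$ with $a_2>-\bar{a}$ (the trajectory shadows the non-escaping one for arbitrarily long times), which together with $\delta'>0$ on that component forces $\delta\equiv\infty$ there, a contradiction, so no such component exists. The paper's fixed-$a$ comparison avoids this connectedness issue altogether, which is the main structural advantage of its approach; your approach, once patched, buys an explicit formula for $\delta'(a)$ and strict monotonicity of $\delta$ on $[-\bar{a},a^*)$, which is slightly more than the proposition asserts.
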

\begin{proof}
The proof below is a modification of the proof of \cite[Proposition 43]{NV14}. The key is that the function $-\kappa t$ is odd and convex.

For $a \in (-\bar{a}, \infty)$, let
\begin{equation*}
    \psi_+ (t)=\theta_{a}(t), \text{    } \vp(t) =\theta_{-\bar{a}}(t), \text{     } \psi_- (t)=-\theta_{a}(-t),
\end{equation*}
We study these functions when their range is  $[-\pi_p/2,\pi_p/2]$, that is, on the interval on which these functions increase from $-\pi_p/2$ to $\pi_p/2$.
%It follows from $b(a) <\infty$ that $\psi_{\pm} >0$ on this set.
As in the proof of Proposition \ref{prop4.2}, it is easy to see that $\vp$ is an odd function, and $\psi_{-}$ is also a solution of \eqref{IVP 7}. By comparison, we always have $\psi_{-}(t) > \vp(t) > \psi_{+}(t)$.
Since these functions have positive derivatives, we can consider their inverse functions defined on $[-\pi_p/2,\pi_p/2]$,
$$h=\psi_{-}^{-1}, \text{     } s=\vp^{-1}, \text{     } g=\psi_{+}^{-1}.$$
The function $m(\theta)$ defined by
$$m(\theta) :=\frac{1}{2} \left(h(\theta)+g(\theta) \right)$$
is an odd function with
$$m(\pi_p/2) = \frac{1}{2} \left(h(\pi_p/2)+g(\pi_p/2) \right) = \frac{1}{2}\left(b(a)-a \right)=\frac{1}{2}\delta(a).
$$
Thus the desired estimate is equivalent to $m(\pi_p/2) > \bar{a}$. By symmetry, it suffices to consider the set $\theta \geq 0$, or equivalently $m\geq 0$. It is easy to compute that $m$ satisfies the following ODE:
\begin{equation*}
    2\frac{dm}{d\theta} =\frac{1}{\a - g f(\theta)}+\frac{1}{\a - h f(\theta)},
\end{equation*}
with $m(0)=0$, where $f(\theta)=\frac{\kappa}{p-1} \cos^p_p(\theta)\sin_p(\theta)$.
Since the function $z(t)=(\a -\b t)^{-1}$ is convex for $\a,\b \geq 0$, we have
\begin{equation}
    \frac{dm}{d\theta} \geq \frac{1}{\a - m f(\theta)}
\end{equation}
for all those values of $\theta$ such that both $g$ and $h$ are nonnegative. However, by symmetry, it is easily seen that this inequality holds also when one of the two is negative. Moreover, the inequality is strict if $\b >0$(i.e. if $\theta \in (0\,\pi_p/2)$) and if $g\neq h$.
By noticing the function $s(t)=\vp^{-1}(t)$ satisfies
$$\frac{ds}{d\theta} = \frac{1}{\a - s f(\theta)},$$
we conclude using a ODE comparison that
$$m(\pi_p/2) > s(\pi_p/2) =\bar{a},$$
and the desired claim follows immediately.
\end{proof}

At last, we study $\bar{\delta} :=\delta(-\bar{a}) =2\bar{a}$ as a function of $\l$, having fixed $p$ and $\kappa$.
%, $1<p< \infty$ and $\kappa <0$.
It's easy to that $\bar{\delta}$ is a strictly decreasing function and so invertible. Thus we can define its inverse $\l(\delta)$, which is a continuous and decreasing function. Moreover, it can be characterized in the following equivalent way.
\begin{prop}\label{prop4.5}
For fixed $\kappa <0, 1<p<\infty$, we have that given  $\delta>0$, $\l$ is the first nonzero Neumann eigenvalue of the one-dimensional problem
\begin{equation*}
    (p-1)|w'|^{p-2}w'' -\kappa \, t |w'|^{p-2}w' =-\l |w|^{p-2}w
\end{equation*}
on $[-\delta/2,\delta/2]$.
\end{prop}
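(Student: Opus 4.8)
The plan is to show that the two quantities describe the same object: on one hand the number $\l$ is, by construction, the eigenvalue $\l$ whose odd solution $w_{-\bar a}$ produced via Proposition \ref{prop4.2} has its first critical point at $\bar a$, so that $w_{-\bar a}$ restricted to $[-\bar a,\bar a]=[-\delta/2,\delta/2]$ is strictly increasing with $\dot w_{-\bar a}(\pm\delta/2)=0$; on the other hand we must check that this $\l$ is precisely the \emph{first} nonzero Neumann eigenvalue of $\mathcal L_{p,\kappa}$ on $[-\delta/2,\delta/2]$. The forward implication is essentially already in hand: $w_{-\bar a}$ is a genuine Neumann eigenfunction on $[-\delta/2,\delta/2]$ (it solves the ODE, is odd hence orthogonal to constants in the appropriate $|w|^{p-2}w$ sense, and satisfies $\dot w(\pm\delta/2)=0$), so $\l$ is \emph{some} nonzero Neumann eigenvalue there. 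So the content is the minimality.

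First I would recall the variational characterization: as in Lemma \ref{lmb2}, the first nonzero Neumann eigenvalue $\mu_{p,\kappa,\delta}$ on $[-\delta/2,\delta/2]$ equals $\lambda_{p,\kappa,\delta/2}$, the infimum of the Rayleigh quotient over functions on $[0,\delta/2]$ vanishing at $0$, and the minimizer can be taken odd and, by the last Lemma before the proof of Theorem \ref{Thm Main}, strictly increasing on $(0,\delta/2)$ with a single sign-change at the origin (a "first eigenfunction has one nodal domain" statement in this one-dimensional weighted setting). Thus the first eigenfunction is, up to scaling, exactly of the type studied in Section 4: an odd solution of \eqref{eq4.1} that increases from its negative minimum to its positive maximum on $[-\delta/2,\delta/2]$ with vanishing derivative at both ends. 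Now I would invoke the structure theory already developed: for a given $\l>0$, Propositions \ref{prop4.2}, \ref{prop4.3}, \ref{prop4.4} show that among all solutions of the IVP \eqref{IVP} the odd one minimizes the length $\delta(a)$ of the increasing interval, with value $\bar\delta(\l)=2\bar a(\l)$, and that $\bar\delta$ is strictly decreasing in $\l$. Hence for the given $\delta$ there is exactly one $\l$ with $\bar\delta(\l)=\delta$, namely our $\l$, and for this value the odd solution realizes a Neumann eigenfunction on exactly $[-\delta/2,\delta/2]$.

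To nail minimality, suppose $\mu$ were a nonzero Neumann eigenvalue on $[-\delta/2,\delta/2]$ with $\mu<\l$. Its first eigenfunction is, as above, (a scalar multiple of) an odd, strictly increasing solution of \eqref{eq4.1} with parameter $\mu$, vanishing derivative at $\pm\delta/2$; that is, it is the odd IVP solution for eigenvalue $\mu$, and its increasing interval has length $\bar\delta(\mu)$. But a genuine eigenfunction on $[-\delta/2,\delta/2]$ forces $\bar\delta(\mu)=\delta=\bar\delta(\l)$, contradicting strict monotonicity of $\bar\delta$ (Proposition \ref{prop4.4} together with the remark that $\bar\delta$ is strictly decreasing). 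Therefore no smaller nonzero Neumann eigenvalue exists, and since $\l$ itself is a nonzero Neumann eigenvalue, $\l=\mu_{p,\kappa,\delta}$. The main obstacle is the one-dimensional nodal-domain fact — that a first nonzero Neumann eigenfunction of the weighted $p$-Laplacian on an interval changes sign exactly once — so that an arbitrary first eigenfunction is forced into the odd, single-crossing normal form; this is standard for $p=2$ but for general $p$ requires the $p$-Prüfer angle monotonicity ($\dot\theta>0$ near the critical values) already established in \eqref{IVP 3}, which I would cite rather than reprove.
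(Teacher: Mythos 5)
Your argument is correct, and in fact the paper states Proposition \ref{prop4.5} with no proof at all, so there is nothing to compare it against line by line; what you write is essentially the intended completion, assembled from the paper's own ingredients. The two halves are the right ones: the odd solution $w_{-\bar a}$ of \eqref{IVP} is a Neumann eigenfunction on $[-\delta/2,\delta/2]$ that is orthogonal to constants in the weighted $|w|^{p-2}w$ sense, so $\lambda$ is \emph{some} nonzero Neumann eigenvalue; and for minimality you invoke that the first nonzero Neumann eigenvalue $\mu$ admits an odd eigenfunction, strictly increasing with vanishing derivative at $\pm\delta/2$ (Lemma \ref{lmb2} and the lemma following it --- their proofs use neither $p\le 2$ nor the sign of $\kappa$, so quoting them here is legitimate even though they appear in the section devoted to $1<p\le 2$). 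Normalizing its minimum to $-1$, uniqueness for the initial value problem \eqref{IVP} identifies it with the model solution started at $-\delta/2$, and the uniqueness of $\bar a$ in Proposition \ref{prop4.2} then forces $\bar\delta(\mu)=\delta=\bar\delta(\lambda)$, so injectivity of $\bar\delta$ gives $\mu=\lambda$. This is exactly the economical route; note that Propositions \ref{prop4.3} and \ref{prop4.4} are not actually needed for this proposition (they enter only in Section 5).

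Two citations should be repaired, though neither is a fatal gap. First, the strict monotonicity of $\bar\delta$ in $\lambda$ does \emph{not} follow from Proposition \ref{prop4.4}, which fixes $\lambda$ and varies the starting point $a$; what you need is the (unproved) remark immediately preceding Proposition \ref{prop4.5}. Since that remark is also what makes $\lambda(\delta)$ well defined, relying on it is fair, but if you want a self-contained proof you should add the one-line Pr\"ufer comparison: increasing $\lambda$ increases $\alpha$, hence increases $\dot\theta$ in \eqref{IVP 5}, hence strictly decreases the time for $\theta$ to run from $0$ to $\pi_p/2$, i.e. $\bar a(\lambda)$. Second, in the identification step you should observe explicitly that the odd eigenfunction satisfies $\phi'(0)>0$ (immediate from the strict monotonicity of $e^{-\kappa s^2/2}|\phi'|^{p-2}\phi'$ used in the lemma), so that $(-\delta/2,\delta/2)$ is exactly the maximal interval of increase and $\delta/2$ is the \emph{first} critical point; this is what lets you read off $\bar\delta(\mu)=\delta$ rather than merely $\bar\delta(\mu)\le\delta$.
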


%The last ingredient we need is that among all intervals of fixed length $\delta$, the symmetric interval $[-\delta/2, \delta/2]$ has the lowest first nonzero Neumann eigenvalue.
%\begin{prop}

%\end{prop}

\section{Proof of Theorem \ref{Thm Main}}
After all the preparations in the previous two sections, we finally prove Theorem \ref{Thm Main} for $\kappa\le0$ in this section.
\begin{proof}[Proof of Theorem \ref{Thm Main}]
It suffices to prove the case $\kappa<0$, as the case $\kappa=0$ follows by letting $\kappa \to 0$.
Let $u$ be an eigenfunction of $\Delta_{p,f}$ associated to the eigenvalue $\l$. In view of
\begin{equation*}
    \int_M |u|^{p-2} u \, e^{-f} =0,
\end{equation*}
we can normalize $u$ so that $u_{\min} =-1$ and $u_{\max} \in (0,1]$.

By Proposition \ref{prop4.2} and \ref{prop4.3},
there exists an interval $[a,b]$ and a solution $w$ of \eqref{eq4.1} such that $w$ is strictly increasing on $[a,b]$ with $w(a) =-1 =u_{\min}$ and $w(b) = u_{\max} \in (0,1]$. Moreover, we have $\l=\l_{p}([a,b])$, the first nonzero eigenvalue of the Neumann problem $ |w'|^{p-2}w'' -\kappa \, t |w'|^{p-2}w'+\lambda |w|^{p-2}w =0$ on $[a,b]$.
%The key to get the sharp eigenvlaue estimate is to find a model function $w$ such that $w_{\min}=-1$ and $w_{\max}=u_{\max}$.
%For the case $\kappa =0$, we can simply choose $w(t)=\sin_p(\l^{\frac 1 p}\, t )$, where $t \in [\pi_p/(2\l^{\frac 1 p}), ]$

Let $x$ and $y$ be such that $u(x)=\min_{M} u$ and $u(y)=\max_M u$.
%Consider a unit speed minimizing geodesic $\gamma:[0,d(x,y)]\to M $ joining $x$ and $y$. Let $f(t)=u(\gamma(t)$ and choose $I \subset [0,1]$ such that $I \subset (f')^{-1}(0,\infty)$ and $f^{-1}$ is well-defined in a subset of full measure of $[-1,u_{\max}]$. Then we get, by change of variables and the sharp gradient estimate proved in Theorem \ref{Thm gradient comparison},
%\begin{eqnarray*}
%    D &\geq& \int_0^{d(x,y)} dt \geq \int_I \; dt \geq \int_{-1}^{u_{\max}} \frac{dy}{f'(f^{-1}(y))} \geq \int_{-1}^{w_{\max}} \frac{dy}{w'(w^{-1}(y))} \\
%    &=&\int_a^{b(a)} dt  =\delta(a) \geq \delta(\bar{a}),
%\end{eqnarray*}
By Theorem \ref{Thm gradient comparison}, we have
$$D \geq d(x,y) \geq \Psi(u(y)) -\Psi(u(x)) = \Psi(u_{\max}) -\Psi(u_{min}) = b-a = \delta(a) \geq \delta(\bar{a}),$$
where the last inequality is proved in Proposition \ref{prop4.4}.
This and Proposition \ref{prop4.5} yield immediately to the desired estimate.
%$$\l=\l_{1,p} ([a,b]) \geq \l_{1,p} \left([-(b-a)/2, (b-a)/2 ]\right)  \geq \l_{1,p} ([-D/2, D/2])$$
%where the last equality follows from the domain monotonicity of $\l_{1,p}$.
\end{proof}
When $\kappa=0$, we have that
$$
\mu_p(0,D)=(p-1)\frac{\pi_p^p}{D^p}.
$$
Then we conclude from Theorem \ref{Thm Main} that
\begin{corollary}
Let $(M^n,g,f)$ be a compact Bakry-Emery manifold (possibly with smooth strictly convex boundary) with diameter $D$ and $\Ric +\nabla^2 f \geq0 $, and $1<p<\infty$. Then
\begin{equation*}
    \lambda_{p,f} \geq (p-1)\frac{\pi_p^p}{D^p}.
\end{equation*}
\end{corollary}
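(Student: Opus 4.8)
The plan is to obtain the corollary as a direct specialization of Theorem \ref{Thm Main} to the case $\kappa = 0$, so essentially all the work reduces to identifying the constant $\mu_p(0,D)$. Recall that $\mu_p(0,D)$ is, by definition, the first nonzero Neumann eigenvalue on $[-D/2,D/2]$ of the one-dimensional equation \eqref{ODE} with $\kappa = 0$, namely
\begin{equation*}
    (p-1)|\vp'|^{p-2}\vp'' = -\mu_p(0,D)\,|\vp|^{p-2}\vp.
\end{equation*}
So the task is to show that this first nonzero Neumann eigenvalue equals $(p-1)\pi_p^p/D^p$, and then Theorem \ref{Thm Main} (which applies for all $1<p<\infty$ when $\kappa = 0$) immediately gives $\lambda_{p,f} \geq (p-1)\pi_p^p/D^p$.

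First I would solve the model ODE explicitly. As noted in the Remark following the existence proposition in Section 4, when $\kappa = 0$ the equation can be integrated: the $p$-polar radius $r = ((\vp')^p + \alpha^p \vp^p)^{1/p}$ is constant, so by separation of variables every solution has the form $\vp(t) = A\sin_p\bigl(\mu_p(0,D)^{1/p}\,t + B\bigr)$ for constants $A, B$, using $\alpha = (\mu_p(0,D)/(p-1))^{(p-1)}$ together with the normalization built into $\sin_p$; more precisely one checks $\frac{d^2}{dt^2}\sin_p$ satisfies $(p-1)|\cos_p|^{p-2}\frac{d}{dt}\cos_p = -|\sin_p|^{p-2}\sin_p$ after rescaling, which pins down the frequency as $\bigl(\mu_p(0,D)/(p-1)\bigr)^{1/p}$. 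Next I would impose the Neumann conditions $\vp'(\pm D/2) = 0$. By the oddness/evenness symmetry of the problem (cf. Lemma \ref{lmb2}), the first nonzero eigenfunction is odd, so I may equivalently work on $[0, D/2]$ with $\vp(0) = 0$ and $\vp'(D/2) = 0$; taking $B = 0$, the condition $\vp(0)=0$ is automatic, and $\vp'(D/2) = A\,\omega\cos_p(\omega D/2) = 0$ with $\omega = (\mu_p(0,D)/(p-1))^{1/p}$ forces $\cos_p(\omega D/2) = 0$, i.e. $\omega D/2 = \pi_p/2$ for the smallest positive choice. Solving gives $\omega = \pi_p/D$, hence $\mu_p(0,D) = (p-1)(\pi_p/D)^p$, which is exactly the claimed value.

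Finally I would assemble the pieces: Theorem \ref{Thm Main} with $\kappa = 0$ gives $\lambda_{p,f} \geq \mu_p(0,D)$, and the computation above identifies $\mu_p(0,D) = (p-1)\pi_p^p/D^p$, completing the proof. I do not expect any serious obstacle here — the only mildly delicate point is correctly matching the normalization of $\sin_p$ (its "quarter-period" is $\pi_p/2$, and its defining ODE carries the factor $p-1$ only after one is careful about where the exponents $p-2$ sit) so that the frequency comes out as $\pi_p/D$ rather than off by a constant; this is a routine check against the identities $|\sin_p|^p + |\cos_p|^p = 1$ and $\cos_p = \frac{d}{dt}\sin_p$ recalled in Section 4. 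One could alternatively bypass the explicit ODE solution entirely and argue variationally from the Rayleigh quotient $\int_0^{D/2}|\phi'|^p\,ds / \int_0^{D/2}|\phi|^p\,ds$ (with $\kappa=0$ the weight $e^{-\kappa s^2/2}$ is $1$), whose minimizer is known to be a rescaled $\sin_p$ with minimum value $(p-1)\pi_p^p/D^p$; either route is short.
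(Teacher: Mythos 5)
Your proposal is correct and follows essentially the same route as the paper: specialize Theorem \ref{Thm Main} to $\kappa=0$ (where it covers all $1<p<\infty$) and identify $\mu_p(0,D)=(p-1)\pi_p^p/D^p$. The only difference is that the paper simply asserts this value of $\mu_p(0,D)$, whereas you verify it by the explicit $\sin_p\!\left(\pi_p t/D\right)$ solution of the model ODE (consistent with the remark in Section 4 that the $\kappa=0$ equation integrates to rescaled $p$-sines), and your frequency computation $\mu=(p-1)\omega^p$ with $\omega=\pi_p/D$ checks out.
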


\section{A lower bound when $p \geq 2$ and $\kappa >0$}

In this section, we prove a non-sharp lower bound for $\l_{p,f}$ when $p\geq 2$ and $\kappa>0$ using merely the Bochner formula for the $f$-Laplacian and integration by parts.

\begin{thm}\label{Thm Lich lowe bound}
Fix $p \geq 2$. Let $(M,g,f)$ be a compact Bakry-Emery manifold (possibly with smooth strictly convex boundary) satisfying $\Ric+\nabla^2 f \geq \kappa \, g$ for some $\kappa >0$.
Then the first nonzero eigenvalue of the weighted $p$-Laplacian (with Neumann boundary condition if $\p M \neq \emptyset$), denoted by $\l_{p,f}$, admits the following lower bound:
\begin{equation}\label{lower bound}
    \l_{p,f} \geq \left(\frac{\kappa}{p-1} \right)^{\frac{p}{2}}.
\end{equation}
\end{thm}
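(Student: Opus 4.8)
The plan is to run a weighted $L^p$-version of the classical Lichnerowicz computation, relying only on the Bochner formula for $\Delta_f$ and on integration by parts. Let $u$ be an eigenfunction, $\Delta_{p,f}u=-\lambda|u|^{p-2}u$ with $\lambda=\lambda_{p,f}>0$, normalized so that $\int_M|u|^p e^{-f}d\mu=1$, and put $h=|\nabla u|^2$. By Remark \ref{rmk regularity}, $u$ is smooth on the open set $\{\nabla u\neq0\}$, and I will do all computations there, postponing the justification near the critical set. The starting point is the weighted Bochner formula together with $\Ric+\nabla^2f\ge\kappa g$:
\begin{equation*}
\tfrac12\Delta_f h=|\nabla^2u|^2+\langle\nabla u,\nabla\Delta_f u\rangle+(\Ric+\nabla^2f)(\nabla u,\nabla u)\ \ge\ |\nabla^2u|^2+\langle\nabla u,\nabla\Delta_f u\rangle+\kappa\,h .
\end{equation*}
I multiply this by $h^{(p-2)/2}\ge0$ — the first place where $p\ge2$ is used — and integrate against $e^{-f}d\mu$.

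For the left-hand side, integration by parts gives $\tfrac12\int_M h^{(p-2)/2}\Delta_f h\,e^{-f}d\mu=-\tfrac{p-2}{4}\int_M h^{(p-4)/2}|\nabla h|^2e^{-f}d\mu$ up to a boundary integral, which is $\le0$ because the Neumann condition forces $\nabla u$ tangent to $\partial M$ and strict convexity then makes $\partial h/\partial\nu\le0$ for the outward normal; hence the left-hand side is $\le0$. For the cross term I integrate by parts twice: since $\mathrm{div}_f\!\big(h^{(p-2)/2}\nabla u\big)=\Delta_{p,f}u$ and the Neumann condition annihilates the boundary integral, $\int_M h^{(p-2)/2}\langle\nabla u,\nabla\Delta_f u\rangle e^{-f}d\mu=-\int_M\Delta_fu\,\Delta_{p,f}u\,e^{-f}d\mu$, and substituting $\Delta_{p,f}u=-\lambda|u|^{p-2}u$ and integrating by parts once more,
\begin{equation*}
\int_M h^{(p-2)/2}\langle\nabla u,\nabla\Delta_f u\rangle e^{-f}d\mu=\lambda\int_M|u|^{p-2}u\,\Delta_f u\,e^{-f}d\mu=-\lambda(p-1)\int_M|u|^{p-2}h\,e^{-f}d\mu .
\end{equation*}
Putting everything together and discarding the two nonnegative integrals $\int_M h^{(p-2)/2}|\nabla^2u|^2e^{-f}d\mu$ and $\tfrac{p-2}{4}\int_M h^{(p-4)/2}|\nabla h|^2e^{-f}d\mu$, one is left with $\lambda(p-1)\int_M|u|^{p-2}h\,e^{-f}d\mu\ge\kappa\int_M h^{p/2}e^{-f}d\mu$. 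Now Hölder's inequality with exponents $\tfrac{p}{p-2}$ and $\tfrac p2$ (again $p\ge2$) gives $\int_M|u|^{p-2}h\,e^{-f}d\mu\le\big(\int_M|u|^pe^{-f}d\mu\big)^{\frac{p-2}{p}}\big(\int_M h^{p/2}e^{-f}d\mu\big)^{\frac2p}$, while testing the eigenvalue equation against $u$ gives $\int_M h^{p/2}e^{-f}d\mu=\lambda\int_M|u|^pe^{-f}d\mu=\lambda$; with the normalization this yields $\int_M|u|^{p-2}h\,e^{-f}d\mu\le\lambda^{2/p}$, hence $\lambda(p-1)\lambda^{2/p}\ge\kappa\lambda$, and dividing by $\lambda>0$ gives $(p-1)\lambda^{2/p}\ge\kappa$, i.e.\ $\lambda_{p,f}\ge(\kappa/(p-1))^{p/2}$.

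The one genuinely delicate point — and the step I expect to be the main obstacle — is that $u$ is only $C^{1,\alpha}$, so $h$, $\Delta_f u$ and the Hessian term are defined classically merely on $\{\nabla u\neq0\}$, and the three integrations by parts above must be justified near the critical set $\{\nabla u=0\}$. The plan is to insert cutoffs supported in $\{h>\varepsilon\}$, carry out all the integrations by parts there (where $u$ is smooth), and let $\varepsilon\to0$, controlling the extra terms produced by $\nabla(\text{cutoff})$ via level-set integrals $\int_{\{h=\varepsilon\}}(\cdots)$ that vanish along a suitable sequence $\varepsilon\to0$ by the co-area formula together with the integrability of $h^{(p-2)/2}|\nabla^2u|^2$ and the negligibility of the critical set for $p$-eigenfunctions; when $\partial M\neq\emptyset$ one must additionally keep the boundary integrals under control throughout, but their sign is fixed by the strict convexity exactly as above.
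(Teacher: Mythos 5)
Your argument is correct and is essentially the paper's own proof: the paper organizes the same computation as evaluating $\int_M \Delta_{p,f}u\,\Delta_f u\,e^{-f}d\mu$ in two ways (once via the Bochner formula for $\Delta_f$ paired with $|\nabla u|^{p-2}$, discarding the Hessian term and the $\frac{p-2}{4}\int |\nabla u|^{\frac{p-4}{2}}\bigl|\nabla|\nabla u|^2\bigr|^2 e^{-f}$ term, once via the eigenvalue equation), followed by the same H\"older inequality and Rayleigh-quotient identity, yielding $(p-1)\lambda_{p,f}^{2/p}\geq\kappa$. Your explicit treatment of the boundary terms and of the low regularity near $\{\nabla u=0\}$ is in fact more careful than the paper, which performs the computation formally for $C^2$ functions and only remarks that the convex-boundary case extends easily.
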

In the case $f=\text{const}$, Theorem \ref{Thm Lich lowe bound} was due to Matei\cite[Theorem 3.2]{Matei00}. It's easy to see that \eqref{lower bound} is better than the lower bound $ \l_{p,f} \geq \frac{\kappa^{\frac{p}{2}}}{(p-1)^{p-1}}$ obtained by Wang and Li\cite[Theorem 1.1]{WL16} when $p\geq 2$.
%\begin{corollary}
%
%\end{corollary}
\begin{proof}[Proof of Theorem \ref{Thm Lich lowe bound}]
We only present the proof for the case $M$ is closed here, as the proof extends easily to the case $M$ has convex boundary and $u$ satisfies the Neumann boundary condition. Recall that the Bochner formula for the $f$-Laplacian states
\begin{equation*}
    \frac{1}{2}\Delta_f |\nabla u|^2 =|\nabla^2 u|^2 +\langle \nabla u, \nabla (\Delta_f u) \rangle +\Ric_f (\nabla u, \nabla u),
\end{equation*}
where $\Ric_f=\Ric+\nabla^2 f$.
By integration by parts, the Bochner formula, and the fact that $p \geq 2$, we have for any $u \in C^{2}(M)$,
\begin{eqnarray*}
&&\int_M\Delta_{p,f}u \, \Delta_f u e^{-f} \\ 
&=& -\int_M |\nabla u|^{p-2} \langle \nabla u, \nabla (\Delta_f u) \rangle e^{-f}\\
&\geq & \int_M \Ric_f(\nabla u, \nabla u) |\nabla u|^{p-2}e^{-f} -\frac{1}{2} \int_M (\Delta_f |\nabla u|^2 ) |\nabla u|^{p-2}  e^{-f} \\
&\geq & \kappa \int_M |\nabla u|^p e^{-f} +\frac 1 2 \int_M \langle \nabla |\nabla u|^2, \nabla |\nabla u|^{p-2} \rangle e^{-f} \\
&= & \kappa \int_M |\nabla u|^p e^{-f} +\frac{p-2}{4} \int_M |\nabla u|^{\frac{p-4}{2}}\left| \nabla |\nabla u|^2 \right|^2 e^{-f} \\
&\geq & \kappa \int_M |\nabla u|^p e^{-f}
\end{eqnarray*}
On the other hand, the equation $\Delta_{p,f} u=-\l_{p,f} |u|^{p-2}u$ implies
\begin{eqnarray*}
&&\int_M\Delta_{p,f}u \, \Delta_f u e^{-f} \\
&=& -\l_{p,f}\int_M |u|^{p-2} u \, \Delta_f u \, e^{-f} \\
&= &(p-1)\l_{p,f} \int_M  |u|^{p-2} |\nabla u|^2 e^{-f} \\
&\leq & (p-1)\l_{p,f} \left(\int_M |u|^p e^{-f} \right)^{\frac{p-2}{p}} \left(\int_M  |\nabla u|^p e^{-f}\right)^{\frac{2}{p}}
\end{eqnarray*}
where the last inequality comes from H\"older's inequality. Putting the above two estimates together, we get
\begin{equation*}
    \kappa \int_M |\nabla u|^p e^{-f} \leq (p-1)\l_{p,f} \left(\int_M |u|^p e^{-f} \right)^{\frac{p-2}{p}} \left(\int_M  |\nabla u|^p e^{-f}\right)^{\frac{2}{p}},
\end{equation*}
which further implies
\begin{equation*}
   \l_{p,f} \geq \frac{\kappa}{p-1}  \left( \frac{\int_M |\nabla u|^p e^{-f}}{\int_M  |u|^p e^{-f}}\right)^{\frac{p-2}{p}} \geq \frac{\kappa}{p-1}(\l_{p,f})^{\frac{p-2}{p}}.
\end{equation*}
It then follows that $(\l_{p,f})^{\frac{2}{p}} \geq \frac{\kappa}{p-1}$, and the proof is complete.
\end{proof}

%\newpage
%\section*{Acknowledgments} {The authors would like to thank }.

\bigskip

\bibliographystyle{plain}
\bibliography{ref}

\begin{thebibliography}{10}

\bibitem{Andrewssurvey15}
Ben Andrews.
\newblock Moduli of continuity, isoperimetric profiles, and multi-point
  estimates in geometric heat equations.
\newblock In {\em Surveys in differential geometry 2014. {R}egularity and
  evolution of nonlinear equations}, volume~19 of {\em Surv. Differ. Geom.},
  pages 1--47. Int. Press, Somerville, MA, 2015.

\bibitem{AC11}
Ben Andrews and Julie Clutterbuck.
\newblock Proof of the fundamental gap conjecture.
\newblock {\em J. Amer. Math. Soc.}, 24(3):899--916, 2011.

\bibitem{AC13}
Ben Andrews and Julie Clutterbuck.
\newblock Sharp modulus of continuity for parabolic equations on manifolds and
  lower bounds for the first eigenvalue.
\newblock {\em Anal. PDE}, 6(5):1013--1024, 2013.

\bibitem{AN12}
Ben Andrews and Lei Ni.
\newblock Eigenvalue comparison on {B}akry-{E}mery manifolds.
\newblock {\em Comm. Partial Differential Equations}, 37(11):2081--2092, 2012.

\bibitem{AX19}
Ben Andrews and Changwei Xiong.
\newblock Gradient estimates via two-point functions for elliptic equations on
  manifolds.
\newblock {\em Adv. Math.}, 349:1151--1197, 2019.

\bibitem{BQ00}
Dominique Bakry and Zhongmin Qian.
\newblock Some new results on eigenvectors via dimension, diameter, and {R}icci
  curvature.
\newblock {\em Adv. Math.}, 155(1):98--153, 2000.

\bibitem{CY94}
Mu~Fa Chen and Feng~Yu Wang.
\newblock Application of coupling method to the first eigenvalue on manifold.
\newblock {\em Sci. China Ser. A}, 37(1):1--14, 1994.

\bibitem{CY95}
Mufa Chen and Fengyu Wang.
\newblock Application of coupling method to the first eigenvalue on manifold.
\newblock {\em Progr. Natur. Sci. (English Ed.)}, 5(2):227--229, 1995.

\bibitem{CLNbook}
Bennett Chow, Peng Lu, and Lei Ni.
\newblock {\em Hamilton's {R}icci flow}, volume~77 of {\em Graduate Studies in
  Mathematics}.
\newblock American Mathematical Society, Providence, RI; Science Press Beijing,
  New York, 2006.

\bibitem{DP05}
Ond\v{r}ej Do\v{s}l\'{y} and Pavel \v{R}eh\'{a}k.
\newblock {\em Half-linear differential equations}, volume 202 of {\em
  North-Holland Mathematics Studies}.
\newblock Elsevier Science B.V., Amsterdam, 2005.

\bibitem{Elbert79}
\'{A}. Elbert.
\newblock A half-linear second order differential equation.
\newblock In {\em Qualitative theory of differential equations, {V}ol. {I},
  {II} ({S}zeged, 1979)}, volume~30 of {\em Colloq. Math. Soc. J\'{a}nos
  Bolyai}, pages 153--180. North-Holland, Amsterdam-New York, 1981.

\bibitem{Hamilton93}
Richard~S. Hamilton.
\newblock The formation of singularities in the {R}icci flow.
\newblock In {\em Surveys in differential geometry, {V}ol. {II} ({C}ambridge,
  {MA}, 1993)}, pages 7--136. Int. Press, Cambridge, MA, 1995.

\bibitem{Kroger98}
Pawel Kr\"{o}ger.
\newblock On the ranges of eigenfunctions on compact manifolds.
\newblock {\em Bull. London Math. Soc.}, 30(6):651--655, 1998.

\bibitem{Li79}
Peter Li.
\newblock A lower bound for the first eigenvalue of the {L}aplacian on a
  compact manifold.
\newblock {\em Indiana Univ. Math. J.}, 28(6):1013--1019, 1979.

\bibitem{Libook}
Peter Li.
\newblock {\em Geometric analysis}, volume 134 of {\em Cambridge Studies in
  Advanced Mathematics}.
\newblock Cambridge University Press, Cambridge, 2012.

\bibitem{LY80}
Peter Li and Shing~Tung Yau.
\newblock Estimates of eigenvalues of a compact {R}iemannian manifold.
\newblock In {\em Geometry of the {L}aplace operator ({P}roc. {S}ympos. {P}ure
  {M}ath., {U}niv. {H}awaii, {H}onolulu, {H}awaii, 1979)}, Proc. Sympos. Pure
  Math., XXXVI, pages 205--239. Amer. Math. Soc., Providence, R.I., 1980.

\bibitem{LY86}
Peter Li and Shing-Tung Yau.
\newblock On the parabolic kernel of the {S}chr\"{o}dinger operator.
\newblock {\em Acta Math.}, 156(3-4):153--201, 1986.

\bibitem{Li16}
Xiaolong Li.
\newblock Moduli of continuity for viscosity solutions.
\newblock {\em Proc. Amer. Math. Soc.}, 144(4):1717--1724, 2016.

\bibitem{LW17}
Xiaolong Li and Kui Wang.
\newblock Moduli of continuity for viscosity solutions on manifolds.
\newblock {\em J. Geom. Anal.}, 27(1):557--576, 2017.

\bibitem{Matei00}
Ana-Maria Matei.
\newblock First eigenvalue for the {$p$}-{L}aplace operator.
\newblock {\em Nonlinear Anal.}, 39(8, Ser. A: Theory Methods):1051--1068,
  2000.

\bibitem{NV14}
Aaron Naber and Daniele Valtorta.
\newblock Sharp estimates on the first eigenvalue of the {$p$}-{L}aplacian with
  negative {R}icci lower bound.
\newblock {\em Math. Z.}, 277(3-4):867--891, 2014.

\bibitem{Ni13}
Lei Ni.
\newblock Estimates on the modulus of expansion for vector fields solving
  nonlinear equations.
\newblock {\em J. Math. Pures Appl. (9)}, 99(1):1--16, 2013.

\bibitem{SWW19}
Shoo Seto, Lili Wang, and Guofang Wei.
\newblock Sharp fundamental gap estimate on convex domains of sphere.
\newblock {\em J. Differential Geom.}, 112(2):347--389, 2019.

\bibitem{Tolksdorf84}
Peter Tolksdorf.
\newblock Regularity for a more general class of quasilinear elliptic
  equations.
\newblock {\em J. Differential Equations}, 51(1):126--150, 1984.

\bibitem{Valtorta12}
Daniele Valtorta.
\newblock Sharp estimate on the first eigenvalue of the {$p$}-{L}aplacian.
\newblock {\em Nonlinear Anal.}, 75(13):4974--4994, 2012.

\bibitem{Wang12}
Lin~Feng Wang.
\newblock Eigenvalue estimate for the weighted {$p$}-{L}aplacian.
\newblock {\em Ann. Mat. Pura Appl. (4)}, 191(3):539--550, 2012.

\bibitem{WL16}
Yu-Zhao Wang and Huai-Qian Li.
\newblock Lower bound estimates for the first eigenvalue of the weighted
  {$p$}-{L}aplacian on smooth metric measure spaces.
\newblock {\em Differential Geom. Appl.}, 45:23--42, 2016.

\bibitem{WW09}
Guofang Wei and Will Wylie.
\newblock Comparison geometry for the {B}akry-{E}mery {R}icci tensor.
\newblock {\em J. Differential Geom.}, 83(2):377--405, 2009.

\bibitem{WZ17}
Yuntao Zhang and Kui Wang.
\newblock An alternative proof of lower bounds for the first eigenvalue on
  manifolds.
\newblock {\em Math. Nachr.}, 290(16):2708--2713, 2017.

\bibitem{ZY84}
Jia~Qing Zhong and Hong~Cang Yang.
\newblock On the estimate of the first eigenvalue of a compact {R}iemannian
  manifold.
\newblock {\em Sci. Sinica Ser. A}, 27(12):1265--1273, 1984.

\end{thebibliography}

\end{document}